\newcommand\MA{\text{MA}}
\newcommand\Ric{\text{Ric}}
\newtheorem{theorem}{Theorem}[section]
\newtheorem{lemma}[theorem]{Lemma}
\newtheorem{proposition}[theorem]{Proposition}
\newtheorem{corollary}[theorem]{Corollary}
\newtheorem{definition}[theorem]{Definition}
\newcommand{\C}{{\mathcal C}}
\newcommand{\LLL}{{\mathcal L}}
\newcommand{\maths}[1]{{\mathbb #1}}  
\newcommand{\RR}{\maths{R}}
\newcommand{\NN}{\maths{N}}
\newcommand{\CC}{\maths{C}}
\newcommand{\ZZ}{\maths{Z}}
\newcommand{\bbb}{{\mathfrak b}}
\newcommand{\aaa}{{\mathfrak a}}
\renewcommand{\ggg}{{\mathfrak g}}
\newcommand{\ppp}{{\mathfrak p}}
\newcommand{\hhh}{{\mathfrak h}}
\newcommand{\zzz}{{\mathfrak z}}
\newcommand{\CCC}{{\mathfrak C}}
\newcommand{\ttt}{{\mathfrak t}}
\newcommand{\kkk}{{\mathfrak k}}
\newcommand{\NNN}{{\mathfrak N}}
\newcommand{\XXX}{{\mathfrak X}}
\newcommand{\qqq}{{\mathfrak q}}
\title{Kähler-Ricci solitons on horospherical manifolds}
\author{F. Delgove}
\begin{document}

\begin{abstract}
    In this paper we prove there exists a Kähler–Ricci soliton on any smooth Fano horospherical manifold by restricting the Monge-Ampère equation of Kähler-Ricci soliton to a real Monge-Ampère equation and by using the continuity method. Finally, we compute the lower Ricci Bound in this case. We give a geometric interpretation related to the barycenter of the polytope moment associated to the horospherical manifold.\\ \textbf{Keywords:} Kähler–Ricci soliton, Horospherical manifold, Monge–Ampère equation, continuity method. \\
    \textbf{AMS codes:} 53C55, 58E11, 32Q20, 14M27, 14J45  
\end{abstract}

\maketitle
\section{Introduction}

The founding paper on the Kähler-Ricci solitons is Hamilton's article \cite {Hamilton}. They are natural generalizations of Kähler-Einstein metrics and appear as fixed points of the Kähler-Ricci flow. On a Fano compact Kähler manifold $M$, a Kähler metric $g$ is a \textit{Kähler-Ricci soliton} if its Kähler form $\omega_g$ satisfies :
$$
\operatorname{Ric}(\omega_g) - \omega_g = \LLL_X \omega_g,
$$
where $Ric(\omega_g)$ is the Ricci form of $g$ and $\LLL_X \omega_g$ is the Lie derivative of $\omega_g$ along a holomorphic vector field $X$ on $M$. Usually, we denote the Kähler-Ricci soliton by the pair $(g,X)$ and $X$ is called the \textit{solitonic vector field}. We immediately note that if $X=0$ then $g$ is a Kähler-Einstein metric. When $X \neq 0$, we say that the Kähler-Ricci soliton is \textit{non-trivial}. 

The first study of the solitonic vector field $X$ was done in the paper \cite{TZ1,TZ2}. Thanks to the Futaki function, the authors discovered an obstruction to the existence of Kähler-Ricci soliton and proved that $X$ is in the center of a reductive Lie subalgebra $\eta_r(M)$ of the space $\eta(M)$ of all holomorphic vector fields. This study also gives us a uniqueness result about Kähler-Ricci soliton (theorem 0.1 in \cite{TZ1}).

Subsequently, the study was developped by Wang and Zhu in \cite{WZ} where they show the existence of Kähler-Ricci solitons on toric manifolds using the continuity method. This work was supplemented by a study of the Ricci flow by Zhu in \cite{ZZZ} on the toric manifold which showed that the Kähler-Ricci flow converges to the Kähler-Ricci soliton of the toric variety. The result about existence of Kähler-Ricci solitons has been extended to cases of toric fibrations by Podesta and Spiro in \cite {MR2658183}. Recently, the result concerning the convergence of the Ricci flow has been also extended in \cite{2017arXiv170507735H}. 

In 2015, Delcroix used the approach of Zhu and Wang in the case of Kähler-Einstein metrics on some compactifications of reductive groups. In his paper \cite{Delcroix}, the main result is a necessary and sufficient condition for the existence of a Kähler-Einstein metric in some group compactifications. The condition is that the barycenter of the polytope associated to the group compactification must lie in a particular subset of the polytope. The first tool used in his proof is a study of the $(K \times K$)-invariant functions (for the $KAK$ decomposition), in particular he computes the complex Hessian of a $(K \times K)$-invariant function. The second tool is an estimate of the convex potential associated to a $K \times K$-invariant metric on ample line bundles. Then he proves the main result by reducing the problem to a real Monge-Ampère equation and by obtaining $\C^0$ estimates along the continuity method. In our paper,  we extend this approach to smooth horospherical manifolds in the following way.

\begin{theorem}\label{aaaa}
Assume that $M$ is a Fano horospherical manifold. There exits a Kähler-Ricci soliton $(X,g)$ on $M$.
\end{theorem}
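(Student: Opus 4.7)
The plan is to adapt the Wang--Zhu--Delcroix strategy to the horospherical setting, reducing the complex Monge--Ampère equation of the soliton to a real one on the moment polytope and then running the continuity method. Let $G$ act on $M$ with an open horospherical orbit, fix a maximal compact subgroup $K \subset G$ and a reference $K$-invariant Kähler form $\omega_0 \in c_1(M)$ with Ricci potential $h_0$. Writing $\omega_g = \omega_0 + \sqrt{-1}\,\partial\bar\partial \varphi$ for a $K$-invariant potential $\varphi$, the soliton equation is equivalent to
\begin{equation*}
(\omega_0 + \sqrt{-1}\,\partial\bar\partial \varphi)^n \;=\; e^{\,h_0 - \varphi - X(\varphi)}\,\omega_0^n,
\end{equation*}
where the holomorphic vector field $X$ must be chosen in the center of the reductive algebra $\eta_r(M)$ so that the modified Futaki invariant of Tian--Zhu vanishes. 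In the horospherical case this selects a unique element of a distinguished Cartan subalgebra $\aaa$, determined by an explicit combinatorial condition on the moment polytope.

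Next I would exploit the $K$-invariance together with the $KAK$-type decomposition available on horospherical spaces to restrict $\varphi$ to a convex function $u$ on (the positive chamber of) $\aaa^*$. The complex Hessian calculation for $K$-invariant functions, analogous to the one carried out by Delcroix for group compactifications, converts the complex Monge--Ampère equation above into a real Monge--Ampère equation of the form
\begin{equation*}
\MA(u)\,J(\nabla u) \;=\; e^{-u - \langle X,\,\nabla u\rangle + \psi_0}
\end{equation*}
on the interior of the moment polytope $P$, where $J$ is a polynomial Jacobian built from the positive roots not orthogonal to $P$ and $\psi_0$ encodes the reference metric.

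The continuity method then takes the usual form: deform the equation by a parameter $t \in [0,1]$, check solvability at the initial value, establish openness by the implicit function theorem applied to the linearized operator (which is self-adjoint with respect to the weighted measure $e^{-\varphi - X(\varphi)}\omega_g^n$ and has trivial kernel modulo constants thanks to the soliton choice of $X$), and close up the interval by a priori estimates. Higher-order estimates $C^2$ and $C^{2,\alpha}$ follow from Yau's argument adapted to the soliton equation and from standard Evans--Krylov theory once $C^0$ is in hand.

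The main obstacle, as in the works cited, will be the $C^0$ estimate for the potential $u$ along the continuity path. I plan to follow Delcroix's strategy: normalize $u$ so that $\inf_{P} u = 0$, control $\sup_{P} u$ by integrating the equation against the weight $e^{-\langle X,\,\cdot\rangle}$ on $P$, and use convexity together with the vanishing of the modified Futaki invariant to locate the minimum of $u$ inside a fixed compact subset of $P$. The key point is that the soliton choice of $X$ makes the barycenter of $P$ (with respect to the weight $e^{-\langle X, \cdot\rangle}$ and the root-theoretic density $J$) coincide with a distinguished point, which prevents the minimum of $u$ from escaping to the boundary and yields a uniform bound independent of $t$. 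Once this estimate is secured, the continuity method closes and produces the Kähler--Ricci soliton $(g,X)$ on $M$.
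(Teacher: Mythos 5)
Your proposal follows essentially the same route as the paper: determine $X$ in the center of the reductive part of the automorphism algebra by the vanishing of the modified Futaki invariant, reduce the soliton equation by $K$-invariance and the polar decomposition to a real Monge--Amp\`ere equation for the convex potential, and close the continuity method with a $C^0$ estimate obtained by locating the minimizer of $w_t = t\,u_t + (1-t)\,u^0$ in a fixed compact set using convexity and the Futaki condition. The only imprecision is that the real equation is posed on $\aaa_1 \simeq \RR^r$ (the fundamental domain for the $K$-action on $G/H$), with $\nabla u$ taking values in the polytope, rather than on the moment polytope itself; this does not affect the argument.
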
 

This result was already proved in \cite{Delcroix2} in a more general case. But in our paper, we focus on the case of smooth horospherical varieties and give a direct proof in this case. In order to prove this, we don't use the K-stability (as in \cite{Delcroix2}) and we prefer to use analytic methods such as the continuity method. As first step, we compute the Futaki invariant and use the results of \cite{TZ2} in order to get the expression of the solitonic vector field. As a second step, we compute the Monge-Ampère solitonic equation in the horospherical case and use the continuity method to conclude as in the toric case following the approach of \cite{WZ}. 

An important corollary that comes directly from the article \cite{Pasquier2009} is that there exist horospherical varieties which admit a non-trivial Kähler-Ricci soliton and therefore do not admit Kähler-Einstein metrics. Indeed, Matsushima theorem says that if the Fano variety has a non reductive group of automormphisms then it does not admit Kähler-Einstein metrics. In \cite{Pasquier2009}, Pasquier shows that there exists infinitely many horospherical varieties whose group of automorphisms is non reductive, so by using the previous theorem, the only possibility is that the soliton must be non trivial. This is summarized in the following corollary.

\begin{corollary}
There exists infinetely many horospherical manifolds admitting a non trivial Kähler-Ricci soliton.
\end{corollary}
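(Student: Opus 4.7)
The plan is to combine Theorem \ref{aaaa} with two classical inputs already mentioned in the discussion: Matsushima's theorem and Pasquier's classification from \cite{Pasquier2009}. Concretely, I would first extract from \cite{Pasquier2009} an infinite family $(M_i)_{i \in \NN}$ of pairwise non-isomorphic smooth Fano horospherical varieties whose automorphism group $\operatorname{Aut}(M_i)$ is non-reductive.

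Next, for each such $M_i$, Theorem \ref{aaaa} provides a Kähler-Ricci soliton $(g_i, X_i)$. It then remains to exclude the trivial case $X_i = 0$. This is done by Matsushima's theorem: if $X_i$ vanished, then $g_i$ would be a Kähler-Einstein metric on the Fano manifold $M_i$, forcing $\operatorname{Aut}(M_i)$ to be reductive and contradicting the defining property of the chosen family. Hence every $X_i$ is non-zero and each $(g_i, X_i)$ is a non-trivial Kähler-Ricci soliton.

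The main (and only) obstacle is really a bookkeeping one, namely ensuring that Pasquier's construction produces infinitely many genuinely distinct smooth Fano horospherical manifolds rather than the same example repeated. I would address this by isolating a discrete invariant of the associated colored data — for instance the rank, the dimension, or a combinatorial feature of the moment polytope — on which Pasquier's examples vary, thereby certifying that infinitely many non-isomorphic $M_i$ occur in the family. Apart from this classification step, no further analytic work is required: the corollary is a direct assembly of Theorem \ref{aaaa} with Matsushima's obstruction.
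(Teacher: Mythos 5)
Your proposal is correct and follows essentially the same route as the paper: it combines Theorem \ref{aaaa} with Pasquier's infinite family of smooth Fano horospherical varieties having non-reductive automorphism group from \cite{Pasquier2009}, and invokes Matsushima's theorem to exclude the Kähler--Einstein (trivial soliton) case. Your extra remark about verifying that Pasquier's examples are pairwise non-isomorphic is a sensible bookkeeping precaution, but it does not change the argument, which is the one the paper gives.
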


Moreover, following the approach of \cite{Delcroix}, we compute the greatest Ricci lower bound $R(M)$ in the horospherical case. The latter was introduced in \cite{sze} in order to measure the default to " being of Kähler-Einstein" for a Fano manifold. We give a geometric interpretation related to the moment polytope associated with the horospheric variety. More precisely, we will show the following result.

\begin{theorem}
Assume that $ M $ is a smooth horospherical manifold with associated horospherical homogeneous space $ G / H $ such that $ H $ contains the opposite Borel subgroup  $ B^- $ of $ G $. Let us denote by $ P = N_G (H) $ and by $ \Delta ^ + $ the moment polytope with respect to $ B $ of $ M $. Moreover, assume that $ 2 \rho_P \neq \operatorname{Bar}_{DH} (\Delta^+) $ where $ \operatorname{Bar}_{DH} (\Delta^+) $ is the barycenter of the polytope $ \Delta^+ $ for the Duistermaat-Heckman measure. Then $R(M)$ is the unique $ t \in \, ] 0,1 [$ such that
$$
\dfrac{t}{t-1}(\operatorname{Bar}_{DH}(\Delta^+) + 2 \, \rho_P) \in \partial \left( \Delta^+  + 2 \, \rho_P \right).
$$
\end{theorem}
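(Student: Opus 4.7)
The plan is to adapt the continuity method used in Theorem 1.1, together with convex-analytic $C^0$ estimates in the spirit of \cite{Delcroix}, to determine $R(M)$. Consider the twisted continuity path
$$
\operatorname{Ric}(\omega_t) = t\,\omega_t + (1-t)\,\theta, \quad t \in [0,1),
$$
for a fixed $P$-invariant reference form $\theta \in c_1(M)$. By the definition of Székelyhidi \cite{sze}, $R(M)$ is the supremum of parameters $t$ for which this equation is solvable in $c_1(M)$.

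The first step is to reduce this complex equation, exactly as was done for the solitonic equation in the proof of Theorem 1.1, to a real Monge-Ampère equation on the interior of the shifted moment polytope $\Delta^+ + 2\rho_P$. Concretely, I would use the $P/H$-invariant ansatz, the horospherical complex Hessian formula, and the description of the Ricci potential in terms of $2\rho_P$, to obtain a real Monge-Ampère equation for a convex potential $u_t$ on the relevant Weyl chamber, with right-hand side involving the Duistermaat-Heckman measure and a twist depending on $t$.

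The second step is the $C^0$ estimate along the path. Openness is standard (implicit function theorem on the linearized operator); the crux is closedness, i.e.\ a uniform bound on the normalized potential $u_t$. Mimicking the convex analysis in \cite{Delcroix}, one tests the real Monge-Ampère equation against affine functions to see that the Duistermaat-Heckman barycenter of the image of $\nabla u_t$ is forced to equal
$$
\frac{t}{t-1}\bigl(\operatorname{Bar}_{DH}(\Delta^+) + 2\rho_P\bigr).
$$
The convex estimates then show that $u_t$ stays uniformly bounded, after normalization, so long as this prescribed barycenter lies strictly in the interior of $\Delta^+ + 2\rho_P$; when it touches $\partial(\Delta^+ + 2\rho_P)$, the convex potential must escape to infinity along an affine direction orthogonal to the relevant supporting hyperplane, and the estimate genuinely fails.

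Combining the two steps, and using the hypothesis $2\rho_P \neq \operatorname{Bar}_{DH}(\Delta^+)$ to ensure that $t \mapsto \tfrac{t}{t-1}\bigl(\operatorname{Bar}_{DH}(\Delta^+)+2\rho_P\bigr)$ parametrizes a non-degenerate ray as $t$ runs over $(0,1)$, this ray must meet $\partial(\Delta^+ + 2\rho_P)$ at a unique $t_\ast \in (0,1)$, and $R(M) = t_\ast$. The main technical obstacle I expect is the sharpness of the $C^0$ estimate at the boundary: one must check both that the bound holds strictly before $t_\ast$ and that solvability really breaks down at $t_\ast$, so that the number produced by the convex analysis is exactly $R(M)$ and not a weaker upper or lower bound for it.
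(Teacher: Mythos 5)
Your proposal follows essentially the same route as the paper: Aubin's continuity path, reduction via the horospherical Hessian formula to a real Monge--Amp\`ere equation for a convex potential, an integral identity obtained by testing against linear functionals, and the observation that the ray $t \mapsto \frac{t}{t-1}(\operatorname{Bar}_{DH}(\Delta^+)+2\rho_P)$ issuing from an interior point meets $\partial(\Delta^+ + 2\rho_P)$ exactly once.

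One correction to the mechanism as you state it: the Duistermaat--Heckman barycenter of the image of $\nabla u_t$ cannot be what is ``forced to equal'' $\frac{t}{t-1}(\operatorname{Bar}_{DH}(\Delta^+)+2\rho_P)$, since $\nabla u_t(\aaa_1)$ is the full interior of the (fixed) polytope for every $t$, so that barycenter is constant in $t$. The quantity that is actually pinned is the average of $\nabla u^0$ against the weight $e^{-w_t}\,dx$ with $w_t = t u_t + (1-t)u^0$: the Monge--Amp\`ere change of variables gives $\int (\nabla u_t,\zeta)e^{-w_t}dx = -\langle \operatorname{Bar}_{DH}(\Delta^+)+2\rho_P,\zeta\rangle V$ (a $t$-independent constant), and it is only after combining this with the integration-by-parts identity $\int_{\aaa_1}\partial_\zeta w_t\, e^{-w_t}dx=0$ that the factor $\frac{t}{t-1}$ appears. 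Since this average takes values in the closed polytope and, as $t\to R(M)$, the measure $e^{-w_t}dx$ concentrates along the escaping minimum point $x^t$ (forcing the average onto the supporting hyperplane in the direction $\xi = \lim x^t/\Vert x^t\Vert$), one gets that $R(M)$ satisfies the boundary condition. Note also that you do not need to prove sharpness ``in both directions'': once $R(M)$ is shown to satisfy the boundary condition and the ray is shown to meet the boundary at a unique parameter in $]0,1[$, the identification $R(M)=t_*$ follows, which is how the paper closes the argument.
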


\paragraph{ \textit{Acknowledgements}} The author would like to thank T. Delcroix for helpful discusions about horospherical varieties and F. Paulin for his help for the redaction of this paper. This paper is extracted from the PhD thesis of the author realized under the supervision of N. Pali. 

\section{Horospherical Varieties}

In this section, we give some reminders on the theory of algebraic groups and on horospherical varieties. A good reference for the group theory part is \cite{Spr98}. For the notion of horospherical variety, we refer to \cite{Pasquier2009,Tim11}.

\subsection{Reductive group}

Let $G$ be a  reductive connected linear complex algebraic group. We denote $\ggg$ its Lie algebra. If $K$ is a maximum compact subgroup of $G$ with Lie algebra $\kkk$ then 
$$ \ggg = \kkk \oplus J \kkk, $$
where $ J $ is the complex structure of $ \ggg $. Fix a maximum torus of $ T $ of a Borel subgroup $ B $ of $G$. Denote by $ \Phi \subset \XXX (T) $ the root system of $ (G, T) $ where $ \XXX (T) $ is the group of algebraic characters of $ T $. We have the root decomposition:
$$
\ggg = \ttt \oplus \bigoplus_{\alpha \in \Phi} \ggg_\alpha$$
where for any $\alpha \in \XXX(T)$, $ \ggg_\alpha: = \lbrace x \in \ggg ~:~ \forall h \in \ttt ~ ~ \operatorname{ad}(h) (x) = \alpha (h) x, \rbrace$, so that $\ggg_\alpha$ is a complex line if and only if $\alpha \in \Phi$.

Let $ \Phi^+ :=\Phi^+(B)$ be the set of \textit{positive roots} (associated with $ B $) so that the Lie algebra $\bbb$ of $B$ satisfies
$$
\bbb = \ttt \oplus \bigoplus_{\alpha \in \Phi^+} \ggg_\alpha.
$$
We then define the negative roots $\Phi^-:=\Phi^-(B)=-\Phi^+$ of $\Phi$ associated with $B$ so that
$
\Phi = \Phi^+ \sqcup \Phi^-.
$
Let  $B^-$ be the unique Borel subgroup of $G$ called the \textit{opposite Borel subgroup} of $B$ with respect to $ T $ verifying $B \cap B^- =T$. Note that $\Phi^+(B^-)=\Phi^-(B)$.

Denote by $\Sigma$ the set of \textit{simple roots} as the set of roots in $\Phi^+$ that cannot be written as the sum of two elements of $\Phi^+$. For any subset $I$ of $\Sigma$, if we give a subset $I$, let $\Phi_I$ be the subset of $\Phi$ generated by the roots contained in $I$. \textit{The parabolic group containing $B$} with respect to $ I$, denoted by $P:=P_I$ is the connected closed subgroup of $G$ whose Lie algebra is
$$
\ppp = \ttt \oplus \bigoplus_{\alpha \in \Phi^+ \cup \Phi_I} \ggg_\alpha.
$$
Let
$
\Phi_P := \Phi^+ \cup \Phi_I.
$
The parabolic subgroup opposed to $P$, denoted by $Q:=Q_I$, is the parabolic subgroup associated with $I$ for the Borel subgroup $B^-$ i.e. the connected closed Lie subgroup with Lie algebra
$$
\qqq = \ttt \oplus \bigoplus_{\alpha \in \Phi^- \cup \Phi_I} \ggg_\alpha.
$$
Moreover $L = P \cap Q$ is a Levi subgroup of $P$. Let $\Phi^+_P$ be the set of roots that are not in $\Phi_Q = \Phi^- \cup \Phi_I$ so that 
$
\Phi^+_P \sqcup \Phi_Q = \Phi.
$
Note that $\Phi^+_P$ is the set of roots of the unipotent radical $U$ of $P$, that
$
\Phi = \Phi^+_P \sqcup \Phi_I \sqcup \Phi_Q^+,
$
and that
$
\Phi^+_Q=-\Phi^+_P.
$
Finally, we define
$$
2\rho_P := \sum_{\alpha \in \Phi^+_P} \alpha.
$$

\subsection{Horospherical subgroups and homogeneous horospherical spaces}

 If $H$ a closed connected algebraic subgroup of $G$, then $H$ is said to be a \textit{horospherical subgroup} of $G$ if $H$ contains the unipotent radical $U$ of a Borel subgroup  $B$. We can build horospherical subgroups using parabolic subgroups. Let $N_G(H):=\lbrace g \in G ~:~ gHg^{-1}=H \rbrace$ be the normalizer of $H$ in $G$.

\begin{proposition}[\cite{pasquierthese}]\label{PHt}
Let $H$ be a horospherical subgroup of $G$. Then $P:=N_G(H)$ is a parabolic subgroup of $G$ containing the Borel subgroup $B$, and the quotient $P/H=T/T\cap H$ is a torus.
Conversely, if $H$ is a connected closed algebraic subgroup of $G$ such that $N_G(H)$ is a parabolic subgroup $P$ of $G$ and $P/H$ is a torus, then $H$ is horospherical. The fibration
$$
G/H \longrightarrow G/P.
$$
is a torus fibration over a generalized flags manifold with fiber egal to $P/H$.
\end{proposition}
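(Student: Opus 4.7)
My plan is to prove both implications by a Lie-algebra analysis based on the root-space decomposition of $\ggg$ with respect to the maximal torus $T\subseteq B$. The direct implication rests on showing that any Lie subalgebra $\hhh\supseteq\uuu$ must have a rigid, parabolic-like structure; once this is established, the computations of the normalizer and of the quotient become essentially formal. The converse instead uses the Levi decomposition and the fact that a torus has no unipotent elements.

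For the forward direction, I first claim that $\hhh = \hhh_0 \oplus \bigoplus_{\alpha\in\Psi} \ggg_\alpha$ for some $\hhh_0\subseteq\ttt$ and some closed root subset $\Psi\subseteq\Phi$ containing $\Phi^+$, i.e.\ that $\hhh$ is $T$-stable. For $\alpha\in\Phi^+$ the inclusion $\ggg_\alpha\subseteq\uuu\subseteq\hhh$ is immediate; the work is to extract the $\Phi^-$-weight components and the $\ttt$-component of an arbitrary $v\in\hhh$. I would choose $\alpha\in\Phi^-$ maximal in the root partial order among the weights appearing in $v$ and bracket $v$ with $e_{-\alpha}\in\uuu\subseteq\hhh$; by maximality the cross-terms land either in $\uuu\subseteq\hhh$ or at strictly smaller weights, and a double induction on the weight support of $v$ and on root height separates out each component. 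Closure of $\Psi$ under root addition together with $\Psi\supseteq\Phi^+$ then forces $\Psi=\Phi^+\cup\Phi_I$ for $I:=\{\alpha\in\Sigma:-\alpha\in\Psi\}$, making $\hhh$ an ideal of the parabolic Lie algebra $\ppp_I$. The normalizer computation is now formal: $\uuu\subseteq\hhh$ and the $T$-stability give $B\subseteq N_G(H)$, while any larger normalizer must preserve $\Psi$, pinning $N_G(H)=P_I$. The quotient $P_I/H$ has abelian Lie algebra $\ttt/\hhh_0$; the composite $T\hookrightarrow P_I\twoheadrightarrow P_I/H$ is surjective with kernel $T\cap H$, giving $P_I/H\cong T/(T\cap H)$, a torus.

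For the converse, write $P=L\ltimes U_P$ with $U_P$ the unipotent radical. Since $P/H$ is a torus and $U_P$ is unipotent, its image in $P/H$ is trivial, so $U_P\subseteq H$; this yields the natural identification $P/H\cong L/(L\cap H)$, also a torus. A reductive group surjecting onto a torus must contain its derived subgroup in the kernel, so $[L,L]\subseteq L\cap H$. For any Borel $B_L\subseteq L$ the unipotent radical $R_u(B_L)$ is contained in the semisimple part $[L,L]$, hence in $H$, and $B:=B_L\cdot U_P$ is a Borel of $G$ whose unipotent radical $R_u(B_L)\cdot U_P$ lies in $H$, proving that $H$ is horospherical.

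The fibration assertion is then immediate: $H\triangleleft P$ makes $gH\mapsto gP$ a well-defined surjection $G/H\to G/P$ onto the generalized flag variety $G/P$, and each fiber is isomorphic to $P/H$, which we have just identified as a torus. The principal technical obstacle is the $T$-stability of $\hhh$ in the forward direction: extracting the individual weight components of a general $v\in\hhh$ requires careful bracket bookkeeping and an induction organized by the height of negative roots in the weight support. Once this rigidity is in hand, everything else follows from routine root-system combinatorics and standard facts about reductive groups.
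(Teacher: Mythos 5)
The paper does not prove this proposition; it is quoted from Pasquier's thesis, so your argument has to stand on its own. Your converse direction is correct and standard ($U_P\subseteq H$ because a torus has no unipotents, $[L,L]\subseteq H$ because a reductive group surjecting onto a torus kills its derived subgroup, hence $R_u(B_L)U_P\subseteq H$), and so are all the formal consequences once the weight decomposition of $\hhh$ is granted: $P=TH$ gives $P/H\cong T/(T\cap H)$, the normalizer computation pins down $P_I$, and the fibration statement is immediate. The problem is concentrated in the step you yourself flag as the crux, namely that $\hhh=\hhh_0\oplus\bigoplus_{\alpha\in\Psi}\ggg_\alpha$, and as written that step does not go through. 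First, a directional error: bracketing $v$ with $e_{-\alpha}\in\uuu$ for $\alpha\in\Phi^-$ sends the $\ggg_\beta$-component into $\ggg_{\beta-\alpha}$ with $-\alpha\in\Phi^+$, so the cross-terms land at strictly \emph{larger} weights (closer to $0$ and $\Phi^+$), not "strictly smaller" ones; the maximality of $\alpha$ in the support plays no role in making anything terminate. What actually terminates is an induction on $\sum_\beta \operatorname{ht}(-\beta)$ over the negative part of the support, which decreases because every surviving cross-term has its height pushed up by $\operatorname{ht}(-\alpha)$; this repaired induction does yield $t_\gamma\in\hhh\cap\ttt$ for every root $\gamma$ in the negative support of $v$.

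Second, and this is the genuine missing idea: knowing that the components of $[e_{-\alpha},v]$ lie in $\hhh$ does not yet separate the components of $v$ itself. The natural move, replacing $v$ by $v-\tfrac{1}{(\gamma_0,\gamma_0)}[t_{\gamma_0},v]$ to kill the $\gamma_0$-component, rescales the $\gamma$-component by $1-\tfrac{1}{2}\langle\gamma,\gamma_0^\vee\rangle$, which vanishes whenever $\langle\gamma,\gamma_0^\vee\rangle=2$; this happens for $\gamma\neq\gamma_0$ in non-simply-laced types (e.g. $\langle\alpha+2\beta,\beta^\vee\rangle=2$ in type $B_2$ with $\beta$ short), so a single subtraction can annihilate components you still need to recover, and "a double induction \dots separates out each component" is at this point an assertion rather than a proof. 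A correct completion is available along your lines: having shown $t_\gamma\in\hhh\cap\ttt$ for all support roots $\gamma$, note that distinct support roots restrict to distinct nonzero linear forms on $\hhh\cap\ttt$ (if $\gamma$ and $\gamma'$ agree there, pairing the difference against $t_\gamma-t_{\gamma'}\in\hhh\cap\ttt$ gives $(\gamma-\gamma',\gamma-\gamma')=0$), choose $h\in\hhh\cap\ttt$ on which they take pairwise distinct nonzero values, and run a Vandermonde argument on $v,\operatorname{ad}(h)v,\operatorname{ad}(h)^2v,\dots\in\hhh$ to extract each $v_\gamma$ and then $v_0$. With that inserted, the identification of $\Psi$ with $\Phi^+\cup\Phi_I$ by closedness, the computation $N_G(H)=P_I$, and the rest of your outline are fine; without it, the entire forward implication rests on an unproved rigidity claim.
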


We obtain the following decomposition of the Lie algebra $\hhh$ of $H$:
$$
\hhh = \ttt_0 \oplus \bigoplus_{\alpha \in \Phi_P} \ggg_\alpha,\text{ where } \ttt_0 = \ttt \cap \hhh.
$$

If $H$ is a horospherical subgroup of $G$, then $G/H$ is called a \textit{homogeneous horospherical space}. On $G/H$, the normalizer $P:=N_G(H)$ acts by right multiplication by the inverse and $H$ is included in $P$ and acts trivially. We define an action of $G \times P/H$ on $G/H$ by 
$$
(g,pH) \cdot xH = gxp^{-1}H.
$$
Note that the isotropy group of $eH$ is $\lbrace (p,pH) \in G \times P/H ~:~ p \in P \rbrace$. This group is called $diag(P)$ and is isomorphic to $P$ by the first projection.

\subsection{Character groups and one parameter subgroup}
We denote 
$ 
\aaa = \ttt \cap J \kkk. 
$
We have an identification between $ \aaa $ and $ \NNN (T) \otimes_\ZZ \RR $, where $ \NNN (T) $ is the group of algebraic one-parameter subgroups $\lambda : \CC^\times \rightarrow T$ of $T$ given by the derivative at point $1$ of the restriction of $\lambda$ to $\RR^+_*$. Since, as $T \cap H$ is a subtorus of $T$ then $\NNN(T \cap H)$ defines a sublattice of $\NNN(T)$ corresponding to the one parameter subgroups have values in $T \cap H$. With $\aaa_0 = \NNN(T \cap H) \otimes_\ZZ \RR$, we have
$
\ttt_0 = \aaa_0 \oplus J \aaa_0.
$

Recall that the Killing form $\kappa$ of $\ggg$ defines a scalar product $ ( \cdot, \cdot) $ on $\aaa \cap[\ggg, \ggg]$. In addition, $\kappa$ is egal to zero on $\zzz(\ggg)$. Thus we can define a global scalar product $\aaa$ by taking a scalar product invariant by the Weyl group $W$ on $ \aaa \cap Z(\ggg) $ and assuming thaht $ \aaa \cap \zzz(\ggg) $ and $ \aaa \cap [\ggg, \ggg] $ are orthogonal. Let $\aaa_1$ be the orthogonal of $\aaa_0$ for the scalar product $ (\cdot, \cdot) $ and so that
$
\ttt = \aaa_0 \oplus \aaa_1 \oplus J \aaa_0 \oplus J \aaa_1
$
and 
$
\ppp/\hhh \simeq \aaa_1 \oplus J \aaa_1
$.

Finally, we recall that there is a natural pairing $ \langle \cdot, \cdot
\rangle $ between $ \NNN (T) $ and $ \XXX (T) $ defined by $ \chi \circ \lambda (z) = z^{\langle \lambda, \chi \rangle} $ for all $\lambda \in \NNN(T), \chi \in \XXX(T)$ and $z \in \CC^\times$. In addition, the natural pairing between $ \Lambda= \XXX (T) \otimes_\ZZ \RR $ and $ \NNN (T) \otimes_\ZZ \RR $ obtained by  $\RR$-linearity can be seen as $ \langle \chi, a \rangle = \ln \chi (\exp a) $ for all $ \chi \in \XXX(T) $ and $ a \in \aaa \simeq \NNN (T) \otimes_\ZZ \RR $. Since $(\cdot,\cdot)$ is a scalar product, for $ \chi \in \XXX (T) $, we denote $ t_\chi $ the unique element of $ \aaa $ such that 
\begin{equation}{\label{Dada2}}
\forall a \in \aaa, ~~ (t_\chi, a) = \langle \chi, a \rangle.
\end{equation}
For every $\alpha \in \Phi$, let $e_\alpha$ the generator of the complex line $\ggg_\alpha$ such that $[e_\alpha, e_{-\alpha}] = t_\alpha$. Let's end this section by recalling the polar decomposition.
\begin{proposition}[\cite{Delcroix2}]\label{polar}
The image of $\aaa_1$ in $G$ under the exponential is a fundemental domain for the action of $K \times H$ on $G$, where $K$ acts by multiplication on the left and $H$ by multiplication on the right by the inverse. As a consequence, the set $\lbrace \exp(a)H ~:~ a \in \aaa_1 \rbrace$ is a fundamental domain for the action of $K$ on $G/H$.
\end{proposition}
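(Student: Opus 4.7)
The approach is to combine the Iwasawa decomposition of $G$, viewed as a real Lie group, with the two elementary inclusions $U \subset H$ and $\exp(\aaa_0) \subset H$ built into the horospherical structure recalled earlier in this section.

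For the existence part of the fundamental-domain claim, I would invoke the Iwasawa decomposition $G = K \cdot \exp(\aaa) \cdot U$, where $U$ is the unipotent radical of the Borel $B$. Given $g \in G$, this provides a factorization $g = k\,\exp(Y)\,u$ with $k \in K$, $Y \in \aaa$, and $u \in U$. Since $H$ is horospherical, $U \subset H$ by definition; moreover, the Lie-algebra description $\hhh = \ttt_0 \oplus \bigoplus_{\alpha \in \Phi_P} \ggg_\alpha$ together with $\ttt_0 = \aaa_0 \oplus J \aaa_0$ gives $\exp(\aaa_0) \subset H$. Splitting $Y = a_0 + a_1$ along the orthogonal decomposition $\aaa = \aaa_0 \oplus \aaa_1$ and using that $\aaa$ is abelian, one rewrites
$$
g \;=\; k\,\exp(a_1)\,\bigl(\exp(a_0)\,u\bigr),
$$
and the bracketed factor lies in $H$. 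Thus every $(K \times H)$-orbit in $G$ meets $\exp(\aaa_1)$.

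For the injectivity aspect, namely that distinct orbits correspond to distinct points of $\exp(\aaa_1)$ (at worst up to the finite action of the Weyl group $W_L$ of the Levi $L = P \cap Q$ on $\aaa_1$), the strategy is to apply the Cartan involution $\theta$ of $G$, which fixes $K$ pointwise and acts by $-1$ on $\aaa$, to a hypothetical identity $k\,\exp(a_1)\,h^{-1} = \exp(a_1')$. Multiplying this identity by its $\theta$-image yields
$$
\exp(2\,a_1') \;=\; k\,\exp(a_1)\,\bigl(h^{-1}\theta(h)\bigr)\,\exp(a_1)\,k^{-1},
$$
after which one uses the Levi decomposition of $H$ together with the defining orthogonality of $\aaa_1$ in $\aaa$ to show that $h^{-1}\theta(h)$ can shift $a_1$ only through the finite action of $W_L$, recovering the standard $W$-rigidity of the Cartan decomposition restricted to $\aaa_1$.

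The consequence for $G/H$ is then immediate: the factorization $g = k\,\exp(a_1)\,h$ descends to $gH = k\,\exp(a_1)\,H$, so every $K$-orbit in $G/H$ meets $\{\exp(a_1)\,H : a_1 \in \aaa_1\}$, and the fundamental-domain property transfers through the quotient map $G \to G/H$. The main obstacle is precisely the injectivity step, which requires carefully tracking how the non-reductive part of $H$ interacts with $\exp(\aaa_1)$ under the Cartan involution; this is the technical heart of the corresponding result in \cite{Delcroix2} and is where the specific choice of the orthogonal complement $\aaa_1$ plays its decisive role.
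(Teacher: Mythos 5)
The paper does not actually prove this proposition --- it is imported verbatim from \cite{Delcroix2} --- so your proposal can only be judged on its own merits. The existence half (every $(K\times H)$-orbit meets $\exp(\aaa_1)$) is correct and complete: the Iwasawa decomposition $G=K\exp(\aaa)\,U$, the inclusion $U\subset H$ from the definition of a horospherical subgroup, and $\exp(\aaa_0)\subset H$ from $\aaa_0\subset\ttt_0=\ttt\cap\hhh$ give exactly the factorization $g=k\,\exp(a_1)\,\bigl(\exp(a_0)u\bigr)$ you write down.

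The injectivity half, however, contains a genuine gap and is hedged in a way that is incompatible with the statement being proved. A fundamental domain meets each orbit exactly once; ``uniqueness up to the action of $W_L$ on $\aaa_1$'' is not the claim, and in fact no Weyl identification can occur here: the characters of the torus $P/H$ extend to $P$ and are therefore $W_L$-invariant, so $W_L$ acts trivially on $\aaa_1$ --- but your argument does not establish even that weaker statement. The Cartan-involution computation stalls exactly where you admit it does: $h^{-1}\theta(h)$ is not an element of $H$ (since $\theta(U)=U^{-}$, the subgroup $H$ is not $\theta$-stable), and there is no ``Levi decomposition of $H$'' that controls this element, which is why the rigidity argument for $K\exp(\aaa)K$ does not transplant. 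A short route that does work: from $k\,\exp(a_1)\,h^{-1}=\exp(a_1')$, project to $G/P$ to conclude $k\in K\cap P=K\cap L$ (all other factors lie in $P$); then push the identity through the group homomorphism $P\to P/H\cong(\CC^{\times})^{r}$. The image of $K\cap L$ lands in the compact torus $(S^1)^r$, while $\exp(a_1)$ and $\exp(a_1')$ land in the split part $(\RR_{>0})^r$, and $\aaa_1\to P/H$ is injective because $\exp(\aaa)\cap(T\cap H)=\exp(\aaa_0)$ and $\aaa_0\cap\aaa_1=0$; comparing the two factors of the polar decomposition of $(\CC^{\times})^r$ yields $a_1=a_1'$. (Equivalently, one can evaluate $\Vert g\cdot v_\chi\Vert$ for a $K$-invariant norm and highest weight vectors $v_\chi$ with $\chi\in\XXX(P/H)$, since such $\chi$ span the annihilator of $\aaa_0$.) As written, your injectivity step is a statement of intent rather than a proof.
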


\subsection{Horospherical variety}

Recall that a complex algebraic  $G$-variety is a reduced finite type scheme over $\CC$ with an algebraic action of $G$. A normal complex algebraic $G$-variety $X$ will be said to be \textit{$G$-spherical} if it admits an open and dense $B$-orbit. Note that $X$ is then connected. A $G$-spherical variety $X$ will be called \textit{horospherical} if the stabilizer $H$ in $G$ of a point $x$ in the open and dense $B$-orbit is horospherical. We then will say that $(X,x)$ is \textit{a horospherical embedding}. In particular, $X$ has $G/H$ as a dense open set. Two horospherical embeddings $(X,x)$ and $(X',x')$ are \textit{isomorphic } if there is an $G$-equivariant isomorphism from $X$ to $X'$ sending $x$ to $x'$. In this paper, we will assume that $X$ is smooth. Thanks to the GAGA theorems (see \cite{AIF_1956__6__1_0}), the variety $X$ is therefore a Fano projective complex manifold and in particular $X$ is a connected compact Fano Kähler manifold for the metric induced by the Fubini-Study metric of the projective space.

We fix a homogeneous horospherical space $G/H$. Recall that there is a unique element $\alpha^\vee \in \XXX(T)^* \simeq \NNN(T)$ such as $\langle \alpha, \alpha^\vee \rangle=2$. With our previous notations, we have 
$
\alpha^{\vee}= \frac{2}{\Vert t_\alpha \Vert^2} \, t_\alpha.
$
Define $a_\alpha \in \ZZ$ by $ a_\alpha=\langle 2\rho_P, \alpha^\vee \rangle$ and \textit{ Weyl's dominant closed chamber} $\CCC$ by
$$
\CCC = \lbrace \chi \in \Lambda ~:~ \langle \chi, \alpha^\vee \rangle \geq 0 ~~ \forall \alpha \in \Phi^+ \rbrace.
$$
and \textit{the open dominant Weyl chamber} as the interior of the closed dominant Weyl chamber. The semigroup of  \textit{dominant weights} is defined by $\Lambda^+:= \XXX(T) \cap \CCC$.

We have the following definition introduced in \cite{pasquierthese}.
\begin{definition}\label{GHref}
Let $G/H$ be a homogeneous horospherical space. A convex polytope $Q$ of $\aaa \simeq \NNN(T) \otimes_\ZZ \RR$ is said to be \rm{$G/H$-reflective} if we have the following three conditions:
\begin{enumerate}
\item[(1)] $Q$ has its vertices in $\NNN(T) \cup \left\lbrace \frac{\alpha^\vee}{a_\alpha} ~:~ \alpha \in \Phi^+_P \right\rbrace$ and contains $0$ in its interior,
 
\item[(2)] the dual polytope $Q^*$ has its vertices in $\XXX(T)$,
\item[(3)] for all $ \alpha \in \Phi^+_P$, we have $\cfrac{\alpha^\vee}{a_\alpha} \in Q$.
\end{enumerate}
\end{definition}

The \textit{moment polytope} $\Delta^+ \subset \Lambda$ with respect to the Borel subgroup $B$ of the horospherical manifold $X$ as the \textit{Kirwan's moment polytope} of the Kähler manifold $(X,\omega)$ for the action of a maximum compact subgroup $K$ of $G$, where $\omega$ is a $K$-invariant Kähler form in $ 2 \pi \, c_1(X)$ (see \cite{Bri87,K1} for more details). Another way is through representation theory. Let $L$ be an ample $G$-linearized line bundle of a $G$-spherical variety $X$. Denote by $V_\lambda$ an irreducible representation of $G$ of higher weight $\lambda \in \XXX(T)$ with respect to the Borel subgroup $B$. Since $X$ is $G$-spherical, for every $r \in \NN$, there exits a finite set $\Delta_r \subset \XXX(T)$ such that $H^0(X,L^r) = \oplus_{\lambda \in \Delta_r} V_\lambda$. The moment polytope $\Delta^+$ with respect to $B$ is the closure of $\cup_{r \in \NN} \frac{\Delta_r}{r}$ in $\XXX(T) \otimes_\ZZ \RR$ ( \cite{Bri87, Bri89} for more details). We then have the following result.

\begin{proposition}[\cite{pasquierthese}]\label{roro1}
Let $G/H$ be a homogeneous horospherical space. There exists a bijection between Fano horospherical  embedding of $G/H$ and the set of $G/H$-reflective polytopes in $\aaa$. 
In addition, the polytope $ 2 \rho_P + Q^*$ is the moment polytope with respect the Borel subgroup $B$ of the horospherical embedding. The assumption $(3)$ of the definition \ref{GHref} is then equivalent to the fact that $ \Delta^+=2 \rho_P + Q^*$ is included in $\CCC$. In particular, we see that $0 \in \operatorname{Int}(Q^*)$ and that therefore $2 \rho_P \in \operatorname{Int}(\Delta^+)$.
\end{proposition}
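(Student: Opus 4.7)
The proof plan is to invoke the Luna--Vust classification of spherical embeddings, specialized to the horospherical setting, and then to translate the resulting combinatorial data into the polytope language of Definition~\ref{GHref}. First, I would recall the two pieces of data attached to $G/H$: the \emph{valuation cone}, which for a horospherical space is the whole of $\aaa$ (every $G$-invariant $\QQ$-valuation extends, since $G/H \to G/P$ is a torus bundle and $\aaa/\aaa_0$ is the associated lattice), and the set of \emph{colors}, i.e.\ closures of codimension-one $B$-orbits in $G/H$. The colors are indexed by the simple roots in $\Phi^+_P\cap\Sigma$, and under the natural Luna--Vust map into $\aaa$ the color $D_\alpha$ is sent to $\alpha^\vee/a_\alpha$. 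A complete embedding of $G/H$ then corresponds bijectively to a complete colored fan in $\aaa$ whose rays lie in $\NNN(T)\cup\{\alpha^\vee/a_\alpha:\alpha\in\Phi^+_P\}$.

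The second step is to specialize to the Fano condition. Using the standard formula for the anticanonical divisor of a horospherical variety, which is the sum of the $G$-stable prime divisors with multiplicity one together with the color divisor $D_\alpha$ with multiplicity $a_\alpha=\langle 2\rho_P,\alpha^\vee\rangle$, one shows that an embedding $X$ is smooth Fano if and only if its colored fan is the fan of proper faces of a convex polytope $Q\subset\aaa$ with $0$ in its interior, whose vertices are the primitive lattice generators of the rays in $\NNN(T)\cup\{\alpha^\vee/a_\alpha:\alpha\in\Phi^+_P\}$, and whose dual $Q^*$ has vertices in the character lattice $\XXX(T)$ (this last is the integrality of the anticanonical polarization, equivalent to Cartierness of $-K_X$). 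These are precisely conditions (1) and (2), giving the bijection.

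For the moment polytope statement, I would use the representation-theoretic description recalled before the proposition. Applied to $L=-K_X$, and using horosphericity to see that each weight $\lambda$ appearing in $H^0(X,L^r)$ is trivial on $T\cap H$, a Borel--Weil computation identifies $\tfrac{1}{r}\Delta_r$ with the lattice points of $2\rho_P+rQ^*$, so $\Delta^+=2\rho_P+Q^*$. The containment $\Delta^+\subset\CCC$ is automatic for $\alpha\in\Phi_I$ (since $\rho_P$ is orthogonal to these roots and $Q^*$ is bounded), while for $\alpha\in\Phi^+_P$ it amounts to
\[
\langle q,\alpha^\vee\rangle\ge -a_\alpha\quad\forall q\in Q^*,
\]
which by convex duality ($Q$ and $Q^*$ being polar) is exactly $\alpha^\vee/a_\alpha\in Q$, i.e.\ condition (3). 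Finally, (1) gives $0\in\operatorname{Int}(Q)$; since $Q$ is a bounded polytope, polar duality forces $0\in\operatorname{Int}(Q^*)$, hence $2\rho_P\in\operatorname{Int}(\Delta^+)$.

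The main obstacle is the second step: one must carefully match the smoothness and Fano criteria in the Luna--Vust/Pasquier framework to the two integrality conditions in Definition~\ref{GHref}, in particular verifying that the multiplicities $a_\alpha$ appearing in the anticanonical divisor are exactly the normalization factors that convert the color rays into polytope vertices. The rest is either a torus-bundle unwinding ($G/H\to G/P$) or convex duality.
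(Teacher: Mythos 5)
The paper does not actually prove this proposition --- it is quoted from Pasquier's thesis --- and your outline (Luna--Vust colored fans for the horospherical case, the anticanonical divisor formula $-K_X=\sum_i X_i+\sum_\alpha a_\alpha D_\alpha$ with $a_\alpha=\langle 2\rho_P,\alpha^\vee\rangle$, the Borel--Weil description of $H^0(X,-rK_X)$, and polar duality between $Q$ and $Q^*$) is precisely the route taken in that reference, so in substance your approach is the intended one. One justification should be repaired: for $\alpha\in\Phi_I$ the containment $\Delta^+\subset\CCC$ does not follow from boundedness of $Q^*$, but from the fact that $Q^*$ lies in $\XXX(T/T\cap H)\otimes_\ZZ\RR$ while the coroots $\alpha^\vee$ for $\alpha\in I$ lie in $\NNN(T\cap H)$ (the corresponding $SL_2$'s sit inside $H$), so that $\langle q,\alpha^\vee\rangle=0=\langle 2\rho_P,\alpha^\vee\rangle$ for all $q\in Q^*$; with that fix, and keeping track that the colored fan really lives in the quotient $\aaa/\aaa_0\simeq\aaa_1$ rather than in all of $\aaa$, your sketch is correct.
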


We fix a horospherical embedding $(X,x)$ of $G/H$ and we denote $P:=N_G(H)$. By taking the restriction to the open $B$-orbit, we have an isomorphism between the $G$-equivariants automorphisms of $X$ and those of $G/H$ :
\begin{equation}\label{Aut_G(X)}
\operatorname{Aut}_G(X) \simeq \operatorname{Aut}_G(G/H) \simeq P/H.
\end{equation}
We can consult \cite{Knop91} which deals with the problem in the more general context of spherical varieties.

\subsection{Associated linearized line bundle}
In this section, we introduce associated linearize line bundle over homogeneous horospherical space. They are introducted firstly by Delcroix in \cite{Delcroix2}. Let $H$ be horospherical subgroup of $G$ and $P=N_G(H)$.

\begin{definition}
A line bundle $\Pi: L\rightarrow G/H$ is $(G \times P/H)$-linearized if there exists an action $\Theta : ( G \times P/H) \times L \rightarrow L$ noted $(x,y) \mapsto \Theta_x(y)$ such that
\begin{itemize}
\item[$\bullet$] $\Pi : L \rightarrow M $ is a $(G \times P/H)$-equivariant morphism,
\item[$\bullet$] the application induced by the action of $\Theta_{{(e,pH)}} $ between the fibers is linear.
\end{itemize}
\end{definition}
Note that to any $(G \times P/H)$-linearized line bundle, we can associate a character of $P$. Indeed, for any $p \in P$, the action of $(p,pH) \in diag(P)$ is trivial on the trivial class in $G/H$. Thus, $\Theta_{(p,pH)}$ induces a linear isomorphism  between $L_{eH}$ and $L_{eH}$ and therefore a linear representation of dimension 1 of $diag(P) \simeq P$ i.e. there is a character $\chi \in \XXX(P)$ such that
\begin{equation}\label{chixi}
(p,ph) \cdot \xi = \chi(p) \xi,~~ \forall \xi \in L_{eH}.
\end{equation}
Let us consider the projection $\pi :G \longrightarrow G/H$. We can define the pulled-back line bundle $\pi^*L$ over $G$. In addition, since $\pi$ is $G$-equivariant, the line bundle $\pi^*L$ admits a $G$-linearization. In particular, we define a global section $s$ on $\pi^*L$ by chosing an element $s(e) \in (\pi^*L)_e$ and setting
\begin{equation}\label{sdef}
s: g \in G \mapsto g \cdot s(e) \in (\pi^*L)_g.
\end{equation}
Now let us consider the inclusion $\iota : P/H \longrightarrow G/H$. We can define the restriction $\iota^*L$ of the line bundle $L$. In addition, since $\iota$ is equivariant for the action of $P \times P/H$, we obtain that $\iota^*L$ is a $(P \times P/H)$-linearized bundle. Two global sections of $\iota^*L$ can also be defined:
\begin{align}\label{sr}
& s_r : pH \in P/H \mapsto (p,eH) \cdot s'(e) ,\\
& s_l : pH \in P/H \mapsto (e,p^{-1}H) \cdot s'(e),
\end{align}
where the element $s'(e) \in (\iota^*L)_e$ is such that $s'(e)$ and $s(e)$ are mapped to the same  element of $L_{eH}$ by the canonical applications $\pi^*L \rightarrow L$ and $\iota^*L \rightarrow L$. Note that these sections are linked by the formula:
$$
\forall p\in P,~~s_r(pH)= \chi(p) s_l(pH).
$$
We have the following commutative diagram
$$
 \xymatrix{
    \pi^*L \ar[r] \ar[d] & L \ar[d] & \iota^*L \ar[l] \ar[d] \\
    G \ar@/^1pc/[u]^s \ar[r]_\pi & G/H & P/H \ar@/^{-1pc}/[u]_{s_r} \ar@/^1pc/[u]^{s_l} \ar[l]^{\iota} 
  }
$$
Given a Hermitian metric $q$ on a complex line bundle $L$ over a complex manifold $M$ and a local trivialisation $s$ of $L$ over an open subset $U$ of $X$, we define the local potential of $q$ with respect to $s$ by $\varphi : x \in U \mapsto - \ln a_x$ where $a_x= \vert s(x) \vert_q^2$. We can associate, to any Hermitian metric $q$, a $(1,1)$-form $ \omega_q $ called the \textit{curvature of $ q $} by $ \omega_q\vert_U = \sqrt{-1} \partial \overline {\partial} \varphi $ where $ \varphi $ is the local potential. One checks that $ \omega_q \vert_U $ does not depend on the trivialisation and therefore defines a $ (1,1) $-global form. In addition, one can prove that $ \omega_q \in 2  \pi \, c_1 (L) $. We will also say that \textit{$L $ has positive curvature} if there is a metric $ q $ such that $ \omega_q $ is a Kähler form. Fix a reference Hermitian metric $ q ^ 0 $ on $L$ and for any Hermitian metric $ q $, we define a smooth function $ \psi $ on $ X $, called the \textit{global potential of $ q$ with respect to $ q^0 $} by 
$$
\forall x \in X,\forall \xi \in L_x,~~\vert \xi \vert_q^2 = e^{- \psi(x)} \vert \xi \vert^2_{q^0}.
$$
By definition of the $(1,1)$-forms $\omega_q$ and $\omega_{q^0}$ and by computing in local charts, we see that the function $\psi $ satisfies the following relation:
$$
\omega_{q^0} = \omega_q + \sqrt{-1} \, \partial \overline {\partial} \, \psi.
$$
We refer to \cite{Demailly1} for more details.

Let $ G / H $ be a homogeneous horospherical space, $ L $ a $ (G \times P / H )$-linearized line bundle over $ G / H $ and $ q $ a Hermitian $ K $- invariant metric  on $ L $. We can then consider the Hermitian metric $\pi^*q$ on $\pi^*L$ and define the local potential $\phi$ (which is actually defined on the whole $G$) with respect to the section $s$ of equation \eqref{sdef} :
$$
\phi : g \in G \mapsto -2 \ln \vert s(g) \vert_{\pi^*q} \in \RR.
$$
We also define the potential $ u: \aaa_1 \rightarrow \RR $ associated with $ \iota^*L $:
\begin{equation}\label{potudef}
u : x \in \aaa_1 \mapsto -2 \ln \vert s_r(\exp(x)H) \vert_{\iota^*q} \in \RR.
\end{equation}
We have the following relation between these two potentials, using the character $\chi$ defined in equation \eqref{chixi}.
\begin{proposition}[\cite{Delcroix2}]
For all $k \in K$, $x \in \aaa_1$ and $h \in H$, we have
$$
\phi(k \, \exp(x) \, h ) = u(x) - 2 \ln \vert \chi( \exp(x)h ) \vert.
$$
\end{proposition}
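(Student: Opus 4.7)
The plan is to unwind the definitions step by step, first eliminating the factor $k$ by $K$-invariance and then reducing everything to the defining identity of the character $\chi$ inside the group $G\times P/H$. I would organise the argument in three pieces.

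First, by the $K$-invariance of $q$, the lifted metric $\pi^*q$ is also $K$-invariant for the natural left action of $K \subset G$ on $\pi^*L$ (which covers left translation on $G$). Since $s(kg) = k\cdot s(g)$ by construction, one has $\vert s(kg)\vert_{\pi^*q} = \vert s(g)\vert_{\pi^*q}$, hence $\phi(kg) = \phi(g)$. This reduces the proof to showing $\phi(\exp(x) h) = u(x) - 2\ln\vert\chi(\exp(x) h)\vert$.

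Second, I would translate the sections $s$, $s_l$ and $s_r$ to explicit elements of the fibre $L_{eH}$. Let $\xi_0 \in L_{eH}$ denote the common image of $s(e)$ and $s'(e)$ under the canonical maps $\pi^*L \to L$ and $\iota^*L \to L$. Then $s(\exp(x) h)$ corresponds to $(\exp(x)h,eH)\cdot \xi_0 \in L_{\exp(x)H}$, while $s_l(\exp(x)H) = (e,\exp(-x)H)\cdot \xi_0$. Factoring inside $G\times P/H$,
\[(\exp(x)h,eH) \;=\; \operatorname{diag}(\exp(x)h)\cdot\bigl(e,(\exp(x)h)^{-1}H\bigr),\]
and using that $\exp(x)\in N_G(H) = P$ normalises $H$ to identify $(e,(\exp(x)h)^{-1}H)$ with $(e,\exp(-x)H)$ in $P/H$, the image of $s(\exp(x)h)$ in $L$ becomes $\operatorname{diag}(\exp(x)h)\cdot s_l(\exp(x)H)$.

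Third, the crucial step is to show that $\operatorname{diag}(p) = (p,pH)$ acts on any fibre $L_{pH}$ it preserves by multiplication by the scalar $\chi(p)$. I would check this by writing an arbitrary $\eta\in L_{pH}$ as $(p,eH)\cdot\xi$ with $\xi\in L_{eH}$, then commuting $(p,pH)$ through the product via the identity $(p,pH)(p,eH) = (p,eH)(p,pH)$ inside $G\times P/H$ and invoking \eqref{chixi} on the fibre $L_{eH}$. Applied to $p = \exp(x)h$ and $\xi_0$, this yields
\[(\exp(x)h,eH)\cdot \xi_0 \;=\; \chi(\exp(x)h)\,s_l(\exp(x)H).\]
Taking $-2\ln\vert\cdot\vert_q$ of both sides and using the linking formula $s_r(pH) = \chi(p)\,s_l(pH)$ to re-express $s_l$ in terms of $s_r$ (and hence of $u$) gives the claimed identity once the character factors are regrouped.

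The one genuinely delicate point will be this last fibre-wise computation: the character $\chi$ is defined only via the action of $\operatorname{diag}(P)$ on $L_{eH}$, and the upgrade to arbitrary fibres $L_{pH}$ requires the short commutation argument sketched above. Everything else is mechanical bookkeeping with representatives in $G\times P/H$.
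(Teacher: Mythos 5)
The paper gives no proof of this proposition (it is imported from \cite{Delcroix2}), so your attempt can only be judged on its own terms. Your strategy is sound, and the first two steps, as well as the ``delicate'' commutation argument showing that $\operatorname{diag}(p)=(p,pH)$ acts on $L_{pH}$ by the scalar $\chi(p)$ (write $\eta=(p,eH)\cdot\xi$ and use $(p,pH)(p,eH)=(p,eH)(p,pH)$ in $G\times P/H$), are all correct. The problem is the last sentence: the character factors do \emph{not} regroup into the stated identity. Carrying your computation to the end, the image of $s(\exp(x)h)$ in $L$ is $\chi(\exp(x)h)\,s_l(\exp(x)H)$, and since $s_l(\exp(x)H)=\chi(\exp(x))^{-1}s_r(\exp(x)H)$, taking $-2\ln\vert\cdot\vert_q$ yields
$$
\phi(\exp(x)h)=-2\ln\vert\chi(\exp(x)h)\vert+2\ln\vert\chi(\exp(x))\vert+u(x)=u(x)-2\ln\vert\chi(h)\vert,
$$
with $u$ the potential of $s_r$ as in \eqref{potudef}. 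This differs from the claimed right-hand side by $2\ln\vert\chi(\exp(x))\vert=2\langle\chi,x\rangle$, which does not vanish for general $x\in\aaa_1$ unless $t_\chi\in\aaa_0$. A sanity check makes the discrepancy visible immediately: for $k=h=e$ the definitions give $\phi(\exp(x))=-2\ln\vert(\exp(x),eH)\cdot\xi_0\vert_q=-2\ln\vert s_r(\exp(x)H)\vert_{\iota^*q}=u(x)$ on the nose, whereas the stated formula would give $u(x)-2\ln\vert\chi(\exp(x))\vert$.

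So you must either produce the missing factor $\vert\chi(\exp(x))\vert$ from somewhere (there is no room for it in your --- otherwise correct --- bookkeeping), or observe that what your argument actually proves is the identity with $u$ replaced by the potential of $s_l$, namely $u_l(x):=-2\ln\vert s_l(\exp(x)H)\vert_{\iota^*q}=u(x)+2\ln\vert\chi(\exp(x))\vert$, for which indeed $\phi(k\exp(x)h)=u_l(x)-2\ln\vert\chi(\exp(x)h)\vert$. In other words, the proposition as printed is compatible with defining the potential via $s_l$, while \eqref{potudef} defines it via $s_r$; with that definition the correct statement is $\phi(k\exp(x)h)=u(x)-2\ln\vert\chi(h)\vert$. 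Your phrase ``once the character factors are regrouped'' conceals this genuine mismatch, so as written the proof does not establish the equality claimed. You should either resolve the convention (and adjust the statement or the definition of $u$ accordingly) or explicitly flag the inconsistency; simply asserting that the factors regroup is not enough.
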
   

\subsection{Curvature in horospherical case}

Let us now, we recall Delcroix's computation of the curvature of a Hermitian metric on a line bundle over $ G / H $ in an adapted basis. The first step is to define this basis. For this, we identify the tangent space of $G/H $ at
$eH $ with $\ggg/\hhh \simeq \oplus_{\alpha \in \Phi^+_P} \CC e_{-\alpha} \oplus \aaa_1 \oplus J \aaa_1$. We get a complex basis of the tangent space $ T_{eH} (G / H )$ as the concatenation of a real basis $ (l_i)_{1 \leq i \leq i \leq r} $ of $ \aaa_1 $ with $ (e_ {-\alpha})_{\alpha \in \Phi^+_P} $. On $ P / H $, we can define for $ \xi \in T_{eH} (G / H )$ the holomorphic vector field:
\begin{equation}\label{Rxi}
R \xi : pH \mapsto (H,p^{-1}H) \cdot \xi.
\end{equation}
We then have a complex basis of $ T^{1,0} (P / H) $ given $ (Rl_j - \sqrt {-1} J Rl_j) / $2 for $ 1 \leq j \leq r$ and $ (R e_ {-\alpha} - \sqrt{ -1} JR e _{-\alpha}) / $2 for all $\alpha \in \Phi^+_P$, and we note by $ (\gamma_j)_{ 1 \leq j \leq r} \cdot (\gamma_\alpha)_{\alpha \in \Phi^+_P} $ the dual basis. 
\begin{theorem}[\cite{Delcroix2}]
Let $\omega$ be the curvature $(1,1)$-form of $K$-invariant Hermitian metric $q$ on a $(G \times P/H)$-linearized line bundle $L$ over $G/H$, whose associated character is denoted by $\chi$. The form $\omega$ is determined by its restriction to $P/H$, given for any $x \in \aaa_1$ by
$$
\omega_{\exp(x)H}= \sum_{1 \leq j_1,j_2 \leq r} \cfrac{1}{4} \cfrac{\partial^2 u}{\partial l_{j_1} \partial l_{j_2}}(x) \sqrt{-1} \, \gamma_{j_1} \wedge \overline{\gamma}_{j_2} + \sum_{\alpha \in \Phi^+_P} \langle \alpha, \frac{1}{2}\nabla u(x) - t_\chi \rangle \sqrt{-1} \, \gamma_\alpha \wedge \overline{\gamma}_\alpha
$$
where $\nabla u(x) \in \aaa_1$ is the gradient of the function $u$ defined by equation \eqref{potudef} for the scalar product $( \cdot, \cdot)$.
In addition, with $\operatorname{MA}_\RR(u)=\det(\operatorname{Hess}(u))$,
$$
\omega^n_{\exp(x)H} = \dfrac{\MA_\RR(u)(x)}{4^r 2^{\operatorname{Card}(\Phi^+_P})} \prod_{\alpha \in \Phi^+_P} \langle  \alpha , \nabla u(x) + 4 t_{\chi}  \rangle \, \Omega,
$$
where $n$ is the dimension of $G/H$ and
$$
\Omega:= \Big( \bigwedge_{1 \leq j \leq r} \gamma_j \wedge\overline{\gamma}_j \Big)  \wedge   \Big( \bigwedge_{\alpha \in \Phi^+_P} \gamma_\alpha \wedge \overline{\gamma}_\alpha \Big).
$$
\end{theorem}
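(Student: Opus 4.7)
The plan is to compute $\omega_q = \sqrt{-1}\,\partial\bar\partial \phi$ on $G$, where $\phi$ is the local potential of the pulled-back metric $\pi^*q$, and to use $K$-invariance to reduce to the values on $\exp(\aaa_1) H$. By Proposition~\ref{polar}, a $K$-invariant $(1,1)$-form on $G/H$ is determined by its restriction to $P/H$; the frame $\bigl((Rl_j - \sqrt{-1}JRl_j)/2\bigr)_j$, $\bigl((Re_{-\alpha} - \sqrt{-1}JRe_{-\alpha})/2\bigr)_{\alpha\in\Phi^+_P}$ of $T^{1,0}(G/H)|_{P/H}$ is built from the $P/H$-action, so it suffices to compute its coefficients at a generic point $\exp(x)H$. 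First I would invoke the preceding proposition and the fact that the character $\chi$ of the $(G\times P/H)$-linearization is trivial on $H$ (cf.\ equation~\eqref{chixi}), which gives the simple expression $\phi(\exp(x)) = u(x) - 2\langle \chi, x \rangle$ on the abelian subgroup $\exp(\aaa_1)$.

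Next I would compute the three blocks of the complex Hessian at $\exp(x)$. The torus--torus block is immediate: moving within $\exp(\aaa_1)$ the linear term $-2\langle \chi, x\rangle$ contributes nothing to second derivatives, so one recovers the real Hessian of $u$ and, after the standard factor coming from the normalization of the holomorphic frame, the matrix $\tfrac14 \tfrac{\partial^2 u}{\partial l_{j_1}\partial l_{j_2}}$. The mixed torus--root block should vanish: conjugation by $\exp(sl_j)$ scales $e_{-\alpha}$ by $e^{-s\langle\alpha,l_j\rangle}$, so the function $(s,z) \mapsto \phi(\exp(x+sl_j)\exp(ze_{-\alpha}))$ splits to second order into a function of $s$ plus a function of $z$, killing the cross derivative.

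The core of the argument is the root--root block. I would expand $\phi(\exp(x)\exp(ze_{-\alpha}))$ to order $|z|^2$ in $z\in\CC$ by computing the $KAH$-decomposition $\exp(x)\exp(ze_{-\alpha}) = k(z)\exp(x + y(z)) h(z)$ at second order. The tools are: $e_\alpha\in\hhh$ (since $\alpha\in\Phi^+\subset\Phi_P$), a Cartan-style splitting $e_{-\alpha} = (e_{-\alpha}-e_\alpha) + e_\alpha$ whose first summand lies in $\kkk$ (with the normalization $[e_\alpha,e_{-\alpha}]=t_\alpha$), and Baker--Campbell--Hausdorff. A straightforward but delicate computation shows that the quadratic correction $y(z)$ is proportional to $|z|^2 t_\alpha$, with a coefficient that can be read off the bracket relations. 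Applying $\partial_z\partial_{\bar z}|_0$ to $u(x+y(z)) - 2\langle \chi, x+y(z)\rangle$ and using $(t_\chi,a)=\langle\chi,a\rangle$ produces the coefficient $\langle\alpha,\tfrac12\nabla u(x) - t_\chi\rangle$. Off-diagonal entries $\alpha\neq\beta$ decouple by the orthogonality of distinct root spaces in the second-order BCH expansion.

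This establishes the formula for $\omega$; the formula for $\omega^n$ then follows at once from the block-diagonal structure of this matrix, since its top wedge is the product of the determinant of the torus block, yielding $\MA_\RR(u)(x)/4^r$, and the diagonal root block, contributing the stated product over $\Phi^+_P$ (after collecting the combinatorial factor $2^{|\Phi^+_P|}$ coming from the normalization of the $\gamma_\alpha$). The main obstacle is clearly the root--root BCH expansion, where all the group-theoretic content is concentrated; everything else is bookkeeping or follows by invariance.
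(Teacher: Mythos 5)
The paper does not prove this theorem: it is imported verbatim from \cite{Delcroix2}, so there is no internal proof to compare against. Your outline is the standard (and, as far as I can tell, Delcroix's own) strategy — reduce to $P/H$ by $K$-invariance, write the potential on $\exp(\aaa_1)$ as $u(x)-2\langle\chi,x\rangle$, and compute the three blocks of the complex Hessian via a second-order $KAH$/Baker--Campbell--Hausdorff expansion, the only substantive step being the root--root block where $y(z)\sim c\,|z|^2 t_\alpha$ — and it is sound as a plan, though all of the actual content sits in the coefficient $c$, where you must also track that $Re_{-\alpha}$ is defined via the \emph{right} $P/H$-translation, so an $\operatorname{Ad}(\exp x)$ factor $e^{\langle\alpha,x\rangle}$ enters the frame and must cancel against the one produced by conjugating $\exp(ze_{-\alpha})$ past $\exp(x)$ for the clean coefficient $\langle\alpha,\tfrac12\nabla u(x)-t_\chi\rangle$ to survive. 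One reassuring consistency check for your computation: taking the top wedge of the first display gives $\prod_{\alpha}\langle\alpha,\tfrac12\nabla u-t_\chi\rangle=2^{-\operatorname{Card}(\Phi^+_P)}\prod_{\alpha}\langle\alpha,\nabla u-2t_\chi\rangle$, which matches the paper's later use with $L=K_M^{-1}$ (where $t_\chi=-2t_{\rho_P}$, so $-2t_\chi=4t_{\rho_P}$) and indicates that the ``$+4t_\chi$'' in the stated formula for $\omega^n$ is a typo for ``$-2t_\chi$''.
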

Moreover, by choosing $s(e)$ appropriately, with $s_r(e)$ defined in equation \eqref{sr}, we have 
\begin{equation}\label{omegansr}
\omega^n_{\exp(x)H} = \dfrac{\MA_\RR(u)(x)}{4^r 2^{\operatorname{Card}(\Phi^+_P})} \prod_{\alpha \in \Phi^+_P} \langle  \alpha , \nabla u(x) + 4 t_{\chi} \rangle s_r^{-1}  \wedge \overline{s_r}^{-1}.
\end{equation}
In order to explain the choice of $s(e)$, let
$$
S:= \Big( \bigwedge_{1 \leq j \leq r} \gamma_i \Big) \wedge \Big( \bigwedge_{\alpha \in \Phi^+_P} \gamma_\alpha \Big), 
$$
which therefore defines a section of the  line bundle $ \iota^*K_{G/H}$ where $K_{G/H}$ is the canonical line bundle of $G/H$. In particular, we have 
$
S \vert_{eH} \in (\iota^*K)_{eH},
$
and using the following isomorphisms
$$
(K_{P/H})_{eH} \simeq (\iota^*K_{G/H})_{eH} \simeq (K_{G/H})_{eH} \simeq (\pi^*K_{G/H})_{eH},
$$
we denote via these isomorphisms $s(e):=S \vert_{eH}$. We then obtain, by the definition of $\Omega$ and $s_r$ (equations \eqref{Rxi} and \eqref{sr}), that
$$
 \forall x \in \aaa_1,~~s_r(\exp(x)H)=S \vert_{\exp(x)H},
$$
and we can conclude. We will constantly use this choice afterwards.

In this section, we consider a horospherical manifold with reductive group $G$ and horospherical subgroup $H$. Note $P=N_G(H)$ the normalizer of $H$ in $G$. Recall that $P$ is a parabolic subgroup and that there is another parabolic subgroup $Q$ such that there is a Levi subgroup $L$ such that $P \cap Q= L$.From now, we consider that the parabolic subgroup $P$ contains the opposite Borel subgroup $B^-$ i.e. $\Phi^+_P= \Phi^- \backslash \Phi_L = - ( \Phi^+ \backslash \Phi_L) $. Recall that we introduced the moment polytope $ \Delta^+ $ of $X$ with respect to the Borel subgroup $B$. Now, we define 
\begin{equation}\label{polytopeDelta} 
\Delta: = -2 \rho_P - \Delta^+
\text{ where } \rho_P = \sum_{\alpha \in \Phi^+_P} \alpha
\end{equation}
and \textit{the support function} $v_{2 \Delta}$ by
\begin{equation}\label{fctsupport}
v_{2 \Delta}: x \in 2 \Delta \mapsto  \sup_{p \in    2 \Delta} (x,p) \in \RR 
\end{equation}
This function satisfies the following properties:
\begin{itemize}
\item[$\bullet$] $\forall x \in \aaa_1^*,~~\forall \alpha \in \RR^+_*,~~ v(\alpha \, x) = \alpha v(x)$
\item[$\bullet$] $\forall (x,y) \in (\aaa_1^*)^2,~~ v(x+y) \leq v(x) + v(y)$
\item[$\bullet$] $\forall x \in \aaa_1^*, ~~ v(x) \leq d \, \Vert x \Vert $ where $d= \sup_{p \in 2 \Delta} \Vert p \Vert$ and $\Vert \cdot \Vert$ the euclidian norm on $\aaa_1$ (for any basis of $\aaa_1$) .
\end{itemize}
In addition, if $x \in \aaa_1^*$ is such that 
$\Vert x  \Vert=1$, then $2 \Delta$ is contained in the half-space $\lbrace y \in \aaa_1^*~:~ (x,y) \leq v_{2 \Delta}(x) \rbrace$ and at least one point of $2 \Delta$ is in the border of this half-space i.e. in $\lbrace y \in \aaa_1^*~~:~ (x,y) = v_{2 \Delta}(x) \rbrace $.

\begin{proposition}[\cite{Delcroix2}]\label{potconv}
Let $ q $ be $K$-invariant Hermitian metric with positive curvature on the line bundle $ K_X^{-1} $ and let $ u: \aaa_1 \rightarrow \RR $ be the convex potential defined by equation \eqref{potudef}. Then $u $ is a smooth and strictly convex function such that the application $ d u: x \mapsto d_x u \in \aaa_1^*$ verifies $\text{im}(du)=2 \Delta $ and the function $ u-v_{2 \Delta} $ is bounded on $ \aaa_1 $. In particular, the polytope $ 2 \, \Delta$ is independent of the chosen metric $q$.
\end{proposition}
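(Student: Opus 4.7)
The plan is to establish the four assertions of the proposition (smoothness, strict convexity, image of $du$, and asymptotic comparison with $v_{2\Delta}$) in turn, using the curvature formula stated just above together with the moment-polytope dictionary for horospherical varieties.

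\textbf{Smoothness and strict convexity.} Smoothness of $u$ is immediate from the definition in equation \eqref{potudef}: $s_r$ is a nowhere-vanishing smooth local section of $\iota^*L$, the metric $q$ is smooth, and $x \mapsto \exp(x)H$ is real-analytic. Restricting the curvature formula to the $\aaa_1$-directions at $\exp(x)H$ produces the block
$$
\sum_{j_1,j_2} \tfrac{1}{4}\tfrac{\partial^2 u}{\partial l_{j_1}\partial l_{j_2}}(x)\,\sqrt{-1}\,\gamma_{j_1}\wedge\overline{\gamma}_{j_2},
$$
so positivity of $\omega_q$ forces $\operatorname{Hess}(u)(x)$ to be positive definite at every $x \in \aaa_1$, proving strict convexity.

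\textbf{Image of $du$.} The root-block of the curvature formula requires $\langle \alpha, \tfrac{1}{2}\nabla u(x) - t_\chi\rangle > 0$ for every $\alpha \in \Phi^+_P$, so the image of $\tfrac{1}{2}\nabla u - t_\chi$ is contained in the open dual cone of $\Phi^+_P$. To pin it down exactly I would invoke the Brion--Kirwan description of the moment polytope: the moment map of the $K$-action on $(X,\omega_q)$ has image $\Delta^+$, and along the polar slice $\exp(\aaa_1)H$ this image is computed in terms of $\tfrac{1}{2}\nabla u(x) - t_\chi$ together with the translation by $2\rho_P$ built into Proposition \ref{roro1}. Since $\chi$ is the character of $K_X^{-1}$, which is determined by the sum of roots in $\Phi^+_P$, unwinding the identification gives exactly $\operatorname{im}(du) = \operatorname{int}(2\Delta)$ with $2\Delta = -4\rho_P - 2\Delta^+$.

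\textbf{Asymptotic comparison with $v_{2\Delta}$.} Here the compactness of $X$ is essential. I would cover $X$ by the finitely many $P$-stable affine open sets $U_\sigma$ corresponding to the maximal cones $\sigma$ of the fan dual to $2\Delta$; each vertex $m_\sigma$ of $2\Delta$ yields a local trivialization of $K_X^{-1}$ on $U_\sigma$ whose induced potential on $\aaa_1$ is the affine function $x \mapsto (x,m_\sigma)$. Smoothness of $q$ across the boundary orbits of $U_\sigma$, together with the $K$-invariance that lets one work inside the fundamental domain $\exp(\aaa_1)H$ given by Proposition \ref{polar}, would imply that $u(x) - (x,m_\sigma)$ stays bounded as $x$ tends to infinity inside the cone where $m_\sigma$ realizes the maximum. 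Patching these finitely many estimates gives $u(x) = \max_\sigma (x,m_\sigma) + O(1) = v_{2\Delta}(x) + O(1)$. Independence of $q$ then follows: if $\tilde q$ produces $\tilde u$ and $2\tilde\Delta$, the global potential difference $u - \tilde u$ is bounded on the compact manifold $X$, so $v_{2\Delta} - v_{2\tilde\Delta}$ is bounded on $\aaa_1$, and positive homogeneity forces $2\Delta = 2\tilde\Delta$.

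The main obstacle I anticipate is the asymptotic estimate in the third step: translating the smoothness of $q$ across the boundary divisors of $X \setminus (G/H)$ into a uniform bound on $u - v_{2\Delta}$ requires carefully matching the trivializations on each $P$-stable affine chart with the polar coordinates on $P/H$, and controlling the coupling between the toric directions $\aaa_1$ and the unipotent root directions through the character $\chi$.
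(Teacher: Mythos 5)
The paper contains no proof of this proposition: it is imported verbatim from \cite{Delcroix2}, so there is no in-paper argument to measure yours against. That said, your outline is a broadly faithful reconstruction of the strategy of the cited source: strict convexity does follow from reading off the $\aaa_1\times\aaa_1$ block of the curvature formula, the asymptotic comparison $u=v_{2\Delta}+O(1)$ is indeed obtained by controlling the section in the finitely many boundary charts of the compactification, and the independence of the polytope from $q$ follows from boundedness of the global potential difference exactly as you say. The one logically soft spot is your second step: identifying $\operatorname{im}(du)$ through the Kirwan moment map requires first proving that the moment map restricted to the polar slice $\exp(\aaa_1)H$ equals $\tfrac12\nabla u - t_\chi$ up to the stated affine shift, a computation of essentially the same difficulty as the proposition itself, so as written this step defers rather than supplies the argument. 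It is cleaner, and closer to the actual proof, to reverse the order: establish the boundedness of $u-v_{2\Delta}$ first, and then deduce the image of the gradient by elementary convex analysis (for a smooth strictly convex $u$ with $u-v_{2\Delta}$ bounded, $\nabla u$ is an open map with image dense in $2\Delta$, hence equal to $\operatorname{Int}(2\Delta)$). Note also that your conclusion $\operatorname{im}(du)=\operatorname{Int}(2\Delta)$ is the correct one; the proposition as printed asserts $\operatorname{im}(du)=2\Delta$, but the paper itself later uses $\nabla u_t(\aaa_1)=2\operatorname{Int}(\Delta)$, so you have silently repaired a misstatement. Finally, the affine charts to use are those attached to the colored fan of the $G/H$-reflective polytope $Q$ (equivalently the normal fan of $Q^*$), which coincides with what you call the fan dual to $2\Delta$ only after the affine identifications $\Delta^+=2\rho_P+Q^*$ and $\Delta=-2\rho_P-\Delta^+$; this is bookkeeping rather than a gap, but it is where the coupling with the root directions through $\chi$ that you flag actually enters.
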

Since $2 \rho_{Q} = -2 \rho_{P}$ belongs to $\operatorname{Int}(\Delta^+)$ by proposition \ref{roro1}, we have
\begin{equation}\label{0inD}
0 \in \operatorname{Int}(\Delta).
\end{equation}
Recall that
$
\Delta^+ \subset \CCC,
$
where $\CCC$ is the Weyl chamber for the Borel subgroup $B$ (see proposition \ref{roro1}). Thus 
$$
\forall \alpha \in \Phi^+(B), \forall p \in \Delta^+, ~~ \langle p , \alpha^\vee \rangle \geq 0.
$$
The latter can still be written
$$
\forall \alpha \in \Phi^+(B), \forall p\in  \Delta^+, ~~ ( p, \alpha) \geq 0.
$$
Since $\Phi^+_P \subset \Phi^{+}(B^{-})=-\Phi^{+}(B)$, we then have
$$
\forall \alpha \in \Phi^+_P, \forall p \in -\Delta^+, ~~ ( p, \alpha) \geq 0.
$$
Hence by compactness of $\Delta^+$, there exists $f>0$ such that
\begin{equation}\label{f>0}
\forall \alpha \in \Phi^+_P, ~~\forall p \in -\Delta^+, ~~ 0 \leq ( \alpha, p ) \leq f.  
\end{equation}
We end this section with the notion of the \textit{barycenter of a polytope}. We consider a probability measure $\nu$ on the polytope $\Delta^+$. For any endomorphism $f \in \LLL(\aaa_1^*, \aaa_1^*) $, we then consider the map
$$
\int_{  \Delta^+} f d\nu : z \in \aaa_1  \mapsto \int_{ x \in  \Delta^+} \langle f(x) , z \rangle \, d \nu(x) \in \RR.
$$
It follows directly from the definition that $\int_{ \Delta^+} f d\nu \in \aaa_1^*$. Taking the identity as application $f$, we call
$\int_{ \Delta^+} x \, d\nu$ \textit{the barycenter} of the polytope $\Delta^+$ for the measure $ \nu$ and we note it $\operatorname{Bar}_{d \nu}(\Delta^+)$. If $\nu$ is finite and not zero measure on $\Delta^+$ then we define the barycenter for the measure $\frac{\nu}{V}$ where $V$ is the volume of $\Delta^+$ for the measure $\nu$.

For the following, we define the \textit{ Duistermaat-Heckman measure} $\mu_{DH}$ on the polytope moment $\Delta^+$ as the measure with the density
$
p \mapsto \prod_{\alpha \in \Phi_Q^+} (\alpha,p),
$
with respect to Lebesgue measure on $\Delta^+$. We will then denote $ \operatorname{Bar}_{DH}(\Delta^+)$ the barycenter of $\Delta^+$ for the Duistermaat-Heckman measure.
\section{Existence of Kähler-Ricci solitons in the horospherical case}\label{setuphoro}

Let us fox a connected Fano Kähler manifold $M$. Let us recall (see for instance \cite{dercalabi} for detail on real and complex vector fields) that the group of complex automorphisms 
$\operatorname{Aut}(M)$ of $M$ is a finite dimensional Lie group whose Lie algebra is the set of holomorphic real vector fields denoted $ \eta^\RR(M)$ (see theorem 1.1 in chapter $3$ of \cite{Kobayashi2}). If $K$ is a maximum compact subgroup of the connected identity component $\operatorname{Aut}^{\circ}(M)$ of the holomorphic automorphisms group $\operatorname{Aut}(M)$ of the manifold $M$, then we have, see \cite{Fujiki1978}, that
$$
\operatorname{Aut}^{\circ}(M) = \operatorname{Aut}_r(M) \ltimes R_u,
$$
where $\operatorname{Aut}_r(M)$ is a reductive subgroup of $\operatorname{Aut}^{\circ}(M)$ and the complexification of $K$ and $R_u$ the unipotent radical of $\operatorname{Aut}^{\circ}(M)$. In addition, if we note $\eta^\RR(M)$, $\eta_r^\RR(M)$, $\eta_u^\RR(M)$ and $\kkk$ the Lie algebras of $\operatorname{Aut}(M),\operatorname{Aut}_r(M),R_u$ and $K$ respectively, then we have
$$
\eta^\RR(M)= \eta_r^\RR(M) \oplus \eta_u^\RR(M).
$$
Recall that $\eta^\RR(M)$ is the set of holomorphic real vector fields i.e. $X \in \eta^\RR(M)$ if and only if it checks $\LLL_X \omega$=0. Moreover, if we denote $\eta(M)$ the Lie algebra of the complex holomorphic vector fields i.e. the holomorphic sections of the complex vector bundle $T^{1,0}_J M$ then there is an isomorphism between $\eta^\RR(M)$ and $\eta(M)$ given by $X \mapsto X^{1,0}$. If we denote $\eta_r(M)$ and $\eta_u(M)$ the image of $\eta_r^\RR(M)$ and $\eta_u^\RR(M)$ respectively by the previous isomorphism, we then obtain a decomposition 
$$
\eta(M)= \eta_r(M) \oplus \eta_u(M).
$$
Assume that $(M,x)$ is a horospherical embedding under the action of the reducive group $\operatorname{Aut}_r(M)=:G$ and that $x \in M$ is such that its isotropy group $H$ in $G$ is the horospherical subgroup in $G$ containing the unipotent radical of the opposite Borel subgroup $B^-$. We also denote $P:=N_G(H)$. Using the decompositions in the previous section, with noting $\ggg= \eta^{\RR}_r(M)$ the Lie algebra of $G$, we have
$$
\ggg = \aaa_1 \oplus \aaa_0 \oplus J \aaa_1 \oplus J \aaa_0 \oplus \bigoplus_{\alpha \in \Phi^+_P} \ggg_\alpha \oplus \bigoplus_{\alpha \in \Phi_I} \ggg_\alpha \oplus \bigoplus_{\alpha \in \Phi^-_P} \ggg_\alpha.
$$
In particular, the Lie algebra of the Lie group $\operatorname{Aut}_G(M)$ of the $G$-equivariant automorphisms of $M$ is identified with the Lie algebra of $P/H$ which corresponds to the factor $\aaa_1 \oplus J \aaa_1$ in the previous decomposition. Note that, by definition of $\operatorname{Aut}_G(M)$, this Lie algebra also identifies with the center $\zzz(\ggg)$ of the $\ggg$.

We fix a Riemanian metric $g^0$ of Kähler form $\omega_{g^0} \in 2 \pi \, c_1(M) $ on M. By the $\partial
\overline{\partial}$-lemma, there is a unique function $h$ in $\C^{\infty}(M,\RR)$ such that
\begin{equation}\label{Dada1}
\Ric(\omega_{g^0}) - \omega_{g^0} = \sqrt{-1} \partial
\overline{\partial} h, ~~ \int_M e^h \omega_{g^0}^n = \int_M \omega_{g^0}^n.
\end{equation}
In addition, if we fix a Hermitian metric $ m^0 $ on $K_M^{-1} $ such that $ \omega_{m^0} = \omega_{g^0} $, then we can define a volume $ dV $ on $M$ given in a local trivialisation $ s $ of $ K_M^{-1} $ by
$$
dV= \vert s \vert_{m^0} s^{-1} \wedge \overline{s^{-1}} = e^{-\varphi} s^{-1} \wedge \overline{s^{-1}},
$$ 
where $\varphi$ is the local potential with respect to the trivialization $s$. Up to a additive constant, $ h $ is the logarithm of the potential of $ dV $ compared to $ \omega^n_{g^0} $, so we renormalize to match them i.e. we get
\begin{equation}\label{renormal}
e^{h} \omega_{g^0}^n = dV.
\end{equation}
Indeed, by writing locally in a trivialization open $U$ the equality \eqref{Dada1}, we get that
$$
\partial \overline{\partial} \left( \ln \det(g_{i \overline{j}}) +h \right) = \partial \overline{\partial} \left( \ln \det(\varphi_{i \overline{j}}) +h \right) = \partial \overline{\partial} \varphi,
$$
where $\varphi$ is the local potential in this open set. This last equation can finally be written globally 
$$
\partial \overline{\partial} \left( \ln e^h \, \cfrac{\omega^n_{g^0}}{dV} \right)=0. 
$$
We conclude by using the maximum principle.

\subsection{Determination of the solitonic vector field}

The first step in oder to prove the existence of Kähler-Ricci soliton is to determine the solitonic vector field. To do this, we use the Futaki invariant. We have the following result (see proposition $2.1$ in \cite{TZ2}):
\begin{proposition}\label{Fnuletar}
There exists a unique complex holomorphic vector field $X \in \ggg^{1,0}$ with $\operatorname{Im}(X) \in \kkk$ such that the Futaki invariant of $X$, noted $F_X : \eta(M) \rightarrow \CC$, vanishes on $\ggg^{1,0}$. Moreover, $X$ is equal to zero or belongs to the center of $\ggg^{1,0}$, and we have
$$
\forall (u,v) \in \ggg^{1,0} \times \eta(M),~~F_X([u,v])=0.
$$
In particular $F_X(\cdot)$ is a Lie character on $\ggg^{1,0}$.
\end{proposition}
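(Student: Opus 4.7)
The plan is to follow Tian--Zhu's strategy: introduce the modified Futaki invariant $F_X$ indexed by a candidate solitonic vector field $X$, construct $X$ as the unique critical point of a strictly convex, proper functional on $\zzz(\ggg^{1,0})$, and verify that $F_X$ vanishes on the whole algebra via the Lie character property.

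First, I would set up $F_X$: for $X \in \eta(M)$ with $\operatorname{Im}(X) \in \kkk$, choose a $K$-invariant Kähler form $\omega \in 2\pi\, c_1(M)$ and a holomorphy potential $\theta_X$ with $\iota_X \omega = -\sqrt{-1}\, \overline{\partial}\theta_X$, normalized by $\int_M e^{\theta_X}\omega^n = \int_M \omega^n$. Set
$$
F_X(v) := \int_M v(h - \theta_X)\, e^{\theta_X}\, \omega^n, \quad v \in \eta(M),
$$
with $h$ as in \eqref{Dada1}. A standard $\partial\overline{\partial}$-argument together with Stokes' theorem shows that $F_X$ is independent of the $K$-invariant representative $\omega$, is linear in $v$, and that the map $X \mapsto F_X$ is real-analytic.

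To obtain existence and uniqueness on the center, I would then study the functional
$$
N(X) := \int_M e^{\theta_X}\,\omega^n
$$
on $\zzz(\ggg^{1,0})$. A direct computation gives $dN_X(Y) = \int_M \theta_Y\, e^{\theta_X}\,\omega^n$, so after fixing the normalization of $\theta_Y$ critical points of $N$ correspond to the vanishing of $F_X$ on the center. The second variation yields strict convexity (reducing to positivity of a squared-potential integral), and the growth of $\theta_X$ at infinity on $\zzz(\ggg^{1,0})$, controlled by the support function of the moment polytope $\Delta^+$, yields properness. A unique minimizer $X_0 \in \zzz(\ggg^{1,0})$ follows.

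Finally, I would verify that $F_{X_0}$ is a Lie character on all of $\ggg^{1,0}$: for $u, v \in \ggg^{1,0}$, integration by parts using $\iota_u \omega = -\sqrt{-1}\,\overline{\partial}\theta_u$ and the analogous identity for $h$ gives $F_X([u,v]) = 0$ whenever $X$ is central. Combined with $F_{X_0} \equiv 0$ on $\zzz(\ggg^{1,0})$, this extends the vanishing to $[\ggg^{1,0}, \ggg^{1,0}]$, hence to all of $\ggg^{1,0}$ by the reductive decomposition $\ggg^{1,0} = \zzz(\ggg^{1,0}) \oplus [\ggg^{1,0},\ggg^{1,0}]$. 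For uniqueness of $X_0$ in $\ggg^{1,0}$ (not merely in the center), I would invoke the equivariance $F_{\operatorname{Ad}(g)X} \circ \operatorname{Ad}(g) = F_X$ for $g \in \operatorname{Aut}_r^\circ(M)$, which combined with strict convexity of $N$ forces any solution to be $\operatorname{Ad}(K)$-invariant and hence central. The principal obstacle I expect is the convexity-properness pair for $N$: convexity demands a careful second variation in $X$ of $\theta_X$, while properness requires the asymptotic behavior $\theta_X \sim v_{2\Delta}$ from Proposition \ref{potconv} along the central directions.
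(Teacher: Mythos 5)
First, a point of comparison: the paper offers no proof of this statement at all --- it is imported verbatim as Proposition 2.1 of \cite{TZ2} --- so your proposal can only be measured against the Tian--Zhu argument it is implicitly reconstructing. Your architecture is the right one (the identity $F_X(Y)=\int_M\tilde{\theta}_Y e^{\tilde{\theta}_X}\omega^n$, a strictly convex proper functional whose critical point is the solitonic field, the Lie-character property obtained from invariance of $F_X$ under automorphisms commuting with $X$, and the reductive splitting $\ggg^{1,0}=\zzz(\ggg^{1,0})\oplus[\ggg^{1,0},\ggg^{1,0}]$), but two steps do not survive scrutiny as written.

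(i) You normalize $\tilde{\theta}_X$ by $\int_M e^{\tilde{\theta}_X}\omega^n=\int_M\omega^n$ and then propose to minimize $N(X)=\int_M e^{\tilde{\theta}_X}\omega^n$; under that normalization $N$ is identically equal to $\int_M\omega^n$, the map $X\mapsto\tilde{\theta}_X$ is no longer linear, and the whole convexity--properness mechanism collapses. You need the linear normalization $\int_M\tilde{\theta}_X e^{h}\omega^n=0$ of equation \eqref{condtildetheta}, which makes $dN_X(Y)=\int_M\tilde{\theta}_Y e^{\tilde{\theta}_X}\omega^n=F_X(Y)$ meaningful. (ii) The uniqueness in all of $\ggg^{1,0}$ does not close. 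Convexity and properness of $N$ restricted to the center give a unique solution \emph{in the center}; the equivariance $F_{\operatorname{Ad}(k)X}\circ\operatorname{Ad}(k)=F_X$ for $k\in K$ only shows that the full solution set is $\operatorname{Ad}(K)$-stable, which is perfectly compatible with a nontrivial $\operatorname{Ad}(K)$-orbit of non-central solutions; it does not, by itself, force any individual solution to be $\operatorname{Ad}(K)$-fixed. The way Tian--Zhu close this is to prove strict convexity and properness of $N$ on the entire real form $\lbrace X\in\ggg^{1,0} : \operatorname{Im}(X)\in\kkk\rbrace$ --- properness there requires the moment polytope of the torus generated by $\operatorname{Im}(X)$, not the asymptotics of Proposition \ref{potconv}, which only control the central directions $\xi-\sqrt{-1}J\xi$ with $\xi\in\aaa_1$. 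Once that is done, $dN_X=0$ on the real form is equivalent, by $\CC$-linearity of $Y\mapsto F_X(Y)$, to the vanishing of $F_X$ on all of $\ggg^{1,0}$, so uniqueness is immediate, and centrality then follows because the unique solution is $\operatorname{Ad}(K)$-fixed and $\operatorname{Ad}(K)$-fixed vectors of a reductive algebra with $K$ maximal compact are central. With those two repairs your sketch coincides with the cited source.
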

By applying this theorem to the horospherical case, we obtain the following result
\begin{proposition}\label{formX}
The vector field $ X \in \eta_r(M)=\ggg^{1,0}$ given by proposition \ref{Fnuletar} has the following form:
$$
X = \xi - \sqrt{-1} J \,\xi \text{ where } \xi \in \aaa_1.
$$
\end{proposition}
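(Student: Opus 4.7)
The plan is to combine the abstract information in Proposition \ref{Fnuletar} with the explicit description of the center $\zzz(\ggg)$ recalled at the beginning of Section \ref{setuphoro}. Proposition \ref{Fnuletar} supplies two constraints on $X \in \ggg^{1,0}$: it lies in the center of $\ggg^{1,0}$, and $\operatorname{Im}(X) \in \kkk$. I would use the first constraint to pin $X$ down to a complex subspace of candidates, then use the second to cut this down to a real one-parameter family inside $\aaa_1$.

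To implement the first step, I would use that the $\RR$-linear map $Y \mapsto Y^{1,0} = Y - \sqrt{-1}\,JY$ from $\ggg = \eta_r^\RR(M)$ to $\ggg^{1,0} = \eta_r(M)$ is a Lie algebra isomorphism, hence maps $\zzz(\ggg)$ bijectively onto the center of $\ggg^{1,0}$. The preamble of Section \ref{setuphoro} identifies the Lie algebra of $\operatorname{Aut}_G(M) \simeq P/H$ both with $\zzz(\ggg)$ and with the summand $\aaa_1 \oplus J\aaa_1$ in the root-type decomposition of $\ggg$. Hence there exists $Y \in \aaa_1 \oplus J\aaa_1$ with $X = Y - \sqrt{-1}\,JY$, and I can write $Y = \xi + J\eta$ with $\xi,\eta \in \aaa_1$.

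For the second step, I would impose $\operatorname{Im}(X)\in \kkk$. A direct computation gives
$$\operatorname{Im}(X) = -JY = -J\xi - J^2\eta = \eta - J\xi.$$
Since $\aaa_1 \subset \aaa = \ttt \cap J\kkk$, one has $\eta \in J\kkk$ directly, whereas $\xi \in J\kkk$ forces $J\xi \in J^2\kkk = \kkk$. The decomposition $\ggg = \kkk \oplus J\kkk$ then requires the $J\kkk$-component of $\operatorname{Im}(X)$, namely $\eta$, to vanish, yielding $Y = \xi \in \aaa_1$ and $X = \xi - \sqrt{-1}\,J\xi$. The whole argument is essentially bookkeeping and contains no genuinely hard step: once Proposition \ref{Fnuletar} and the identification $\zzz(\ggg) = \aaa_1 \oplus J\aaa_1$ are in hand, what remains is a short real-linear-algebra verification. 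The only point requiring care is the asymmetry between $\xi$ and $\eta$ in the final cancellation, which traces back to the fact that $\aaa_1 \subset J\kkk$ rather than $\kkk$.
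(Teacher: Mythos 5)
Your proposal is correct and follows essentially the same route as the paper: both arguments first use Proposition \ref{Fnuletar} to place $X$ in the center of $\ggg^{1,0}$, identified with $(\aaa_1 \oplus J\aaa_1)^{1,0}$ via the preamble of Section \ref{setuphoro}, and then use $\operatorname{Im}(X) \in \kkk$ together with $\aaa_1 \subset J\kkk$ to kill the $J\aaa_1$-component. The only cosmetic difference is that the paper quotes the intersection $\kkk \cap (\aaa_1 \oplus J\aaa_1) = J\aaa_1$ directly, whereas you rederive the same fact componentwise from $\ggg = \kkk \oplus J\kkk$.
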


\begin{proof}
We are looking for $X \in \zzz(\ggg^{1,0})$ such that
$
\text{Im} X \in  \kkk.
$
But
$\zzz(\ggg^{1,0}) = ( \ppp/\hhh)^{1,0} = \left( \aaa_1 + J \aaa_1 \right)^{1,0}$ and therefore $ \kkk \cap ( \aaa_1 \oplus J \aaa_1) = J \aaa_1,$
so there is a $\xi \in \aaa_1$ such that
$$
X = \xi - \sqrt{-1} I \,\xi \text{ where } \xi \in \aaa_1.
$$
\end{proof}

Let us now compute the Futaki invariant in the horospherical case. We recall that we can write it in the form (see \cite{TZ2})
$$
\forall Y \in \eta(M),~~ F_X(Y)= \int_M \tilde{\theta}_Y e^{\tilde{\theta}_X}, \omega_g^n, 
$$
where $\omega_g$ is a Kähler form belong to $c_1(M)$ and the map $\tilde{\theta}_X \in \C^\infty(M,\CC)$ satisfies
\begin{equation}\label{condtildetheta}
i_X \omega_g = \sqrt{-1} \, \overline {\partial} \, \tilde{\theta}_X,
~~
\int_M \tilde{\theta}_X e^{h_g} \omega^n_g = 0.
\end{equation}
Such a function exists for any $X \in \eta(M)$ by Hodge theory. Using the Cartan formula, we have
$$
\LLL_X \omega =  \sqrt{-1} \, \partial  \overline {\partial} \, \tilde{\theta}_X.
$$
First, we use the $K$-invariance of the Kähler form $ \omega_g $ to notice the following fact:
$$
\LLL_X \omega_g = \LLL _ {\xi - \sqrt{-1} J \xi} \omega_g
= \LLL_\xi \omega_g - \sqrt{-1} \LLL_{J \xi} \omega_g
= \LLL_\xi \omega_g,
$$
since $ \omega_g $ is $ K $-invariant and $ J\xi \in \kkk $. So we just compute $ \LLL_\xi \omega_g $. We have the following propostion (Proposition 4.14 of \cite{Delcroix} with a different convention, group action acts on the left).
\begin{proposition}
Let $\zeta \in \aaa_1$. If we denote $Y= \zeta - \sqrt{-1} J \zeta \in \aaa_1^{1,0}$ then the function $\theta_Y$ is $K$-invariant and
\begin{equation}\label{omegantildehoro}
\forall x \in \aaa_1,~~\tilde{\theta}_{Y} (\exp(x)H) =  ( \nabla u(x), \zeta ),
\end{equation}
where $\nabla u$ is the gradient of the convex potential $u$ defined by equation \eqref{potudef} for the scalar product $( \cdot, \cdot)$. 
\end{proposition}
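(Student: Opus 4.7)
My plan is to compute $\tilde{\theta}_Y$ directly from its defining relations in \eqref{condtildetheta} by exploiting $K$-invariance to reduce everything to the slice $\{\exp(x)H : x \in \aaa_1\}$. First, note that $Y$ is induced by the right $P/H$-action on $G/H$ (since $\zeta \in \aaa_1 \subset \ppp/\hhh \simeq \aaa_1 \oplus J\aaa_1$), which commutes with the left $K$-action, so $Y$ is $K$-invariant. Combined with the $K$-invariance of $\omega_g$, one gets that $i_Y\omega_g$, and hence $\bar{\partial}\tilde{\theta}_Y$, is $K$-invariant. Since $e^{h_g}\omega_g^n$ is $K$-invariant, the normalization in \eqref{condtildetheta} is preserved by $K$-averaging, forcing $\tilde{\theta}_Y$ itself to be $K$-invariant. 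By the polar decomposition (Proposition \ref{polar}) it is then enough to evaluate $\tilde{\theta}_Y$ on $\{\exp(x)H : x \in \aaa_1\}$.

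The second step is to replace the defining equation by a pointwise formula using a Kähler potential. Fix a $K$-invariant Hermitian metric $q$ on $K_M^{-1}$ with curvature $\omega_g$, and let $\phi$ be the associated local potential (as in equation before \eqref{potudef}). For any holomorphic vector field $Y$, the identity $\omega_g = \sqrt{-1}\partial\bar{\partial}\phi$ together with $\bar{\partial}Y = 0$ yields the Cartan-type formula $i_Y\omega_g = \sqrt{-1}\bar{\partial}(Y\phi)$. Hence $\tilde{\theta}_Y = Y\phi + C$ for some constant $C$ determined by the integral normalization, and the problem reduces to computing $Y\phi$ at $\exp(x)H$.

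To evaluate $Y\phi = \zeta\phi - \sqrt{-1}(J\zeta)\phi$ at $\exp(x)H$, split into two pieces. For the $J\zeta$ piece: since $x,\zeta \in \aaa_1$ and $\aaa_1 \oplus J\aaa_1$ is abelian, the curve $t \mapsto \exp(x)\exp(-tJ\zeta)H$ equals $t \mapsto \exp(-tJ\zeta)\exp(x)H$; but $\aaa_1 \subset J\kkk$ gives $J\zeta \in \kkk$, so the curve is a $K$-orbit and $(J\zeta)\phi \equiv 0$ there by $K$-invariance of $\phi$. For the $\zeta$ piece: using the explicit formula $\phi(k\exp(x)h) = u(x) - 2\ln|\chi(\exp(x)h)|$ and the infinitesimal description of the right $P/H$-action, the $\zeta$-derivative at $\exp(x)H$ reduces to a directional derivative of $u$ on $\aaa_1$, producing $(\nabla u(x),\zeta)$ up to a linear-in-$\zeta$ contribution from the character $\chi$ of $K_M^{-1}$.

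The main obstacle is handling this character contribution together with the additive constant $C$, so that the final formula is exactly $(\nabla u(x),\zeta)$ with no extra terms. The character piece is constant in $x$ (only linear in $\zeta$), so it can only be absorbed into $C$; fixing $C$ then requires the integral normalization $\int_M \tilde{\theta}_Y e^{h_g}\omega_g^n = 0$. Using the integration formula \eqref{omegansr} for $\omega_g^n$ together with the change of variables $x \mapsto \nabla u(x)$, this integral transforms into an integral over the polytope $2\Delta$ against the Duistermaat--Heckman-type density $\prod_{\alpha \in \Phi^+_P}\langle\alpha,\nabla u(x)+4t_\chi\rangle$; the resulting polytope identity, combined with a careful bookkeeping of signs (the paper's convention $Y = \zeta - \sqrt{-1}J\zeta$ is twice the standard $(1,0)$-projection, and the right $P/H$-action involves an inverse in its definition), makes the character term cancel exactly against $C$ and delivers the stated formula $\tilde{\theta}_Y(\exp(x)H) = (\nabla u(x),\zeta)$.
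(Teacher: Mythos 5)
The paper itself gives no proof of this proposition (it is quoted from Proposition 4.14 of \cite{Delcroix}), so your outline can only be judged on its own merits. Its overall strategy is the standard one and is sound: $K$-invariance of $\tilde{\theta}_Y$ by averaging (since $Y$ is generated by the right $P/H$-action, which commutes with the left $K$-action), the identity $i_Y\omega_g=\sqrt{-1}\,\overline{\partial}(Y\phi)$ reducing the problem to $\tilde{\theta}_Y=Y\phi+C$ on the open orbit, the vanishing of the $J\zeta$-derivative because $J\aaa_1\subset\kkk$ and $\ttt$ is abelian, and the computation $\zeta\phi(\exp(x))=(\nabla u(x),\zeta)-2(t_\chi,\zeta)$ from $\phi(k\exp(x)h)=u(x)-2\ln\vert\chi(\exp(x)h)\vert$. (One detail worth spelling out: $\tilde{\theta}_Y-Y\phi$ is a priori only holomorphic on the non-compact open orbit $G/H$, so you need its boundedness, or its $K$-invariance, to conclude it is constant.)

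The genuine gap is in your last step, the determination of $C$. The normalization integral is $\int_M\tilde{\theta}_Y e^{h_g}\omega_g^n$, and on the slice the weight $e^{h_g}\omega_g^n$ is, up to a multiplicative constant, $e^{-u(x)}\,s_r^{-1}\wedge\overline{s_r}^{-1}$: the factor $e^{h_g}$ exactly cancels the density $\MA_\RR(u)\prod_{\alpha\in\Phi^+_P}\langle\alpha,\nabla u+4t_\chi\rangle$ of \eqref{omegansr} (this is the same simplification the paper performs when deriving the real Monge--Amp\`ere equation). Hence the normalization reads $\int_{\aaa_1}\tilde{\theta}_Y(\exp(x)H)\,e^{-u(x)}dx=0$, and the change of variables $p=\nabla u(x)$ does \emph{not} turn this into a polytope integral against a Duistermaat--Heckman-type density, because $e^{-u}$ is not a function of $\nabla u$. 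The identity you actually need is $\int_{\aaa_1}(\nabla u,\zeta)\,e^{-u}dx=0$, which follows by integration by parts exactly as in Lemma \ref{intw=0} (using that $u-v_{2\Delta}$ is bounded and $0\in\operatorname{Int}(2\Delta)$); it is this that forces $C=2(t_\chi,\zeta)$ and cancels the character term. Had you carried out the computation you describe, i.e.\ a DH-weighted polytope integral, the constant would come out as a barycenter term $-\langle\operatorname{Bar}_{DH},\zeta\rangle$ up to shifts, which is nonzero unless $M$ is K\"ahler--Einstein, and you would wrongly conclude that the character term survives. Replacing that step by the integration-by-parts argument makes the proof go through.
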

With this result, we can compute the Futaki invariant $F_X$ in order to prove the following result.
\begin{proposition}\label{Paul6}
Let $X=\xi - \sqrt{-1}J \xi$ be as in proposition \ref{formX}. We have
\begin{equation}\label{Futakinul}
\forall 
\zeta \in \aaa_1,~~ \int_{\Delta^+} \langle p + 2 \rho_P, \zeta \rangle e^{\langle -2p -4 \rho_p, \xi \rangle} \prod_{\alpha \in  \Phi^+_Q} ( \alpha, p  ) \, dp =0.
\end{equation}
\end{proposition}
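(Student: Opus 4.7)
The plan is to evaluate the Futaki invariant $F_X(Y)$ for $Y = \zeta - \sqrt{-1}J\zeta$, use that it vanishes by Proposition \ref{Fnuletar} (since $Y \in \ggg^{1,0}$), and then unwind the integral into the claimed form on $\Delta^+$ by switching to the gradient variable and translating by $2\rho_P$.

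First I would take $Y = \zeta - \sqrt{-1}J\zeta \in \ggg^{1,0}$ and apply the integral formula
$$F_X(Y) = \int_M \tilde{\theta}_Y \, e^{\tilde{\theta}_X} \, \omega_g^n,$$
where $\omega_g$ is a $K$-invariant Kähler form in $c_1(M)$, $\tilde{\theta}_Y$ and $\tilde{\theta}_X$ are the functions characterized by \eqref{condtildetheta}, and where $F_X(Y) = 0$ thanks to Proposition \ref{Fnuletar}. Because the integrand is $K$-invariant by \eqref{omegantildehoro}, the polar decomposition of Proposition \ref{polar} reduces the integral over $M$ to an integral over $\aaa_1$, up to a positive constant coming from integrating over the $K$-orbits — this constant is irrelevant since we only need vanishing.

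Second, I would plug in the explicit formulas: \eqref{omegantildehoro} gives
$$\tilde{\theta}_Y(\exp(x)H) = (\nabla u(x), \zeta), \qquad \tilde{\theta}_X(\exp(x)H) = (\nabla u(x), \xi),$$
and \eqref{omegansr} gives the expression for $\omega_g^n$ involving $\MA_\RR(u)(x)\prod_{\alpha \in \Phi^+_P}\langle \alpha, \nabla u(x)+4t_\chi\rangle$, where $\chi$ is the character associated to the $(G\times P/H)$-linearization of $K_M^{-1}$. Identifying $\aaa_1$ with $\aaa_1^*$ via $(\cdot,\cdot)$ and performing the change of variable $y = \nabla u(x)$ — which is a smooth diffeomorphism onto the interior of $2\Delta$ by Proposition \ref{potconv}, and whose Jacobian cancels $\MA_\RR(u)$ exactly — one obtains, up to a nonzero constant,
$$0 = F_X(Y) = C \int_{2\Delta} (y,\zeta)\, e^{(y,\xi)} \prod_{\alpha \in \Phi^+_P}\langle \alpha, y + 4t_\chi\rangle \, dy.$$

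Third, using $\Delta = -2\rho_P - \Delta^+$ from \eqref{polytopeDelta}, I would perform the affine change of variable $y = -2p - 4\rho_P$ with $p \in \Delta^+$; the Jacobian is a constant. The identification of the character attached to $K_M^{-1}$ forces $t_\chi = \rho_P$ (this is consistent with Proposition \ref{roro1}, which gives the shift by $2\rho_P$ between $Q^*$ and $\Delta^+$), so $y + 4 t_\chi = -2p$ and therefore
$$\prod_{\alpha \in \Phi^+_P}\langle \alpha, y + 4 t_\chi\rangle = (-2)^{|\Phi^+_P|}\prod_{\alpha \in \Phi^+_P}(\alpha, p) = 2^{|\Phi^+_P|}\prod_{\alpha \in \Phi^+_Q}(\alpha, p),$$
where the last equality uses $\Phi^+_Q = -\Phi^+_P$. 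Similarly $(y,\zeta) = -2\langle p + 2\rho_P, \zeta\rangle$ and $(y,\xi) = \langle -2p - 4\rho_P, \xi\rangle$. Pulling out all constants gives exactly the stated identity \eqref{Futakinul}.

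The main delicate points will be the bookkeeping of the constants and signs — in particular, correctly identifying the character $\chi$ associated to $K_M^{-1}$ so that $4t_\chi = 4\rho_P$, and confirming that the Jacobian from the gradient substitution cancels the real Monge–Ampère factor as claimed. Everything else is a direct combination of the polar decomposition, the explicit formula for the invariant potentials \eqref{omegantildehoro}, and the volume-form computation \eqref{omegansr}.
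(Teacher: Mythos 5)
Your proposal is correct and follows essentially the same route as the paper: vanishing of $F_X$ on $\ggg^{1,0}$ from Proposition \ref{Fnuletar}, reduction to $\aaa_1$ via $K$-invariance and the polar decomposition, the gradient substitution $y=\nabla u(x)$ cancelling $\MA_\RR(u)$, and the affine change $y=-2p-4\rho_P$ with $t_\chi=\rho_P$. The paper phrases the reduction step as an integration along the fiber of $G/H\to G/P$ followed by the polar decomposition, but this is the same bookkeeping you describe as integrating over the $K$-orbits, and your sign and constant tracking (including $\Phi^+_Q=-\Phi^+_P$) matches the paper's computation.
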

\begin{proof}
We will compute $ F_X (Y) $ when $ Y = \zeta - \sqrt{-1} J \zeta $ where $\zeta \in \aaa_1$. We then have
\begin{align*}
F_X(Y)&= \int_M \tilde{\theta}_Y e^{\tilde{\theta}_X} \omega_g^n 
= \int_{G/H} \tilde{\theta} _Y e^{\tilde{\theta}_X} \omega_g^n,
\end{align*}
since $G/H$ is dense in $M$. We know that $G/H$ is a fibration over $G/P$ with fiber $P/H$ (see proposition \ref{PHt}). By $K$-invariance, by equations \eqref{omegantildehoro} and \eqref{omegansr}, we obtain, by doing an integration along the fiber (see proposition 6.15 of \cite{bott} for example), that there is a constant $C>0$ independent of $\xi$ and $\zeta$ such that,
\begin{align*}
F_X(Y) &= C \int_{P/H} (\nabla u , \zeta) e^{(\nabla u , \xi)} \prod_{\alpha \in \Phi^+_P} \langle \alpha, \nabla u + 4 t_{\rho_P} \rangle \MA_\RR(u) \, s_r^{-1}
\wedge \overline{s_r}^{-1}. \\
\end{align*}
By the polar decomposition (see proposition \ref{polar}), we obtain that there are nonzero constants $C_0,C_1,C_2,C_3$ independent of $\xi$ and $\zeta$ such that
\begin{align*}
F_X(Y) &= C_0 \int_{K} \left( \int_{\aaa_1} (\nabla u(x), \zeta) e^{(\nabla u(x), \xi)} \prod_{\alpha \in \Phi^+_P} \langle \alpha, \nabla u(x) + 4 t_{\rho_P} \rangle \MA_\RR(u)(x) dx \right) dk \\ 
&= C_1 \int_{ x \in \aaa_1} (\nabla u(x), \zeta) e^{(\nabla u(x), \xi)} \prod_{\alpha \in \Phi^+_P} \langle \alpha, \nabla u(x) + 4 t_{\rho_P} \rangle \MA_\RR(u)(x) \, dx \\
&= C_2 \int_{p' \in 2 \Delta} \langle p', \zeta \rangle e^{\langle p', \xi \rangle} \prod_{\alpha \in \Phi^+_P} ( \alpha, p' + 4 \rho_P ) \, dp \\
&= C_3 \int_{ p \in \Delta^+} \langle p + 2 \rho_P, \zeta \rangle e^{\langle -2p -4 \rho_P, \xi \rangle} \prod_{\alpha \in  \Phi^+_Q} ( \alpha, p  ) \, dp. \\
\end{align*}
We obtain the penultimate equality by changing the variable $p'= d_xu$ thanks to propostion \ref{potconv} and by using the duality in particular the equality $ \langle \alpha, t_{\rho_P} \rangle = ( \alpha, \rho_P )$ (see equation \eqref{Dada2}). We use the change of variable $p'=-2p - 4 \rho_P$ and equation \eqref{polytopeDelta} in order to get the last equality.
\end{proof}

\subsection{Monge-Ampère equation in horospherical case}

Let $(X,g)$ be a Kähler-Ricci soliton i.e.
$$
\Ric(\omega_g)- \omega_g=\LLL_X \omega_g.
$$
By using the $\partial 
\overline{\partial}$-lemma, there is a unique function $\psi \in \C^\infty(M,\RR)$ modulo a additive constante such that
$$
\omega_g = \omega_{g^0} + \sqrt{-1} \partial \overline{\partial} \psi.
$$
Noting (see for example \cite{TZ2}) that
$
\tilde{\theta}_X(g)= \tilde{\theta}_X(g^0) + X(\phi).
$
Solving the Kähler-Ricci soliton equation is equivalent to finding a smooth function $\psi $ solution in local coordinates of the following Monge-Ampère equation :
\begin{equation}\label{MAES}
\left \{
\begin{array}{rll}
    \det(g^{0}_{i \overline{j}} + \psi_{i \overline{j}} ) &= \det (g^{0}_{i \overline{j}}) \exp( h - \tilde{\theta}_X(g^0) - X(\psi) - \psi) \\
     (g^{0}_{i \overline{j}} + \psi_{i \overline{j}} )&>0, \\
     \end{array}  
  \right.
\end{equation}
where $h=h_{g^0}$ is the only function belonging to $\C^{\infty}(M,\RR)$ satisfying
$$
\Ric(\omega_g^{0}) - \omega_g^{0} = \sqrt{-1} \, \partial
\overline{\partial} h,
~~
\int_M e^{h} \omega^{n}_{g^0} = \int_M \omega^{n}_{g^0},
$$
The equation \eqref{MAES} can also be written globally in the form of 
\begin{equation}\label{MAE2}
\left \{
\begin{array}{rll}
   (\omega_{g^0} + \sqrt{-1} \partial \overline{\partial} \psi)^n &= e^{h-\psi - \tilde{\theta}_X(g^0) -X(\psi)} \omega_{g^0}^n. \\
      \omega_{g^0} + \sqrt{-1} \partial \overline{\partial} \psi&>0. \\
     \end{array}  
  \right.
\end{equation}
Now suppose that $ g^0 $ is an $K$-invariant Kähler metric on $M$. First of all, we know (see the proposition \ref{formX}) that the solitonic vector field $X$ is written as $X = \xi - \sqrt{-1} J \xi$ with $\xi \in \aaa_1$. Since the form $ \omega_g ^0 $ is $ K $-invariant, we also want to find a  $ K $-invariant  solution $ \psi $. Thanks to the density of $ G / H $ in $M$, we can reduce our study to this homogeneous space. Moreover, per $ K $-invariance, using the polar decomposition and the equation \eqref{omegantildehoro}, we can simply calculate this equation for the values $ \exp (x) H $ for $ x \in \aaa_1 $. By noting $u$ and $u^0$ the potentials defined by the equation \eqref{potudef} for the metrics $q$ and $q^0$ on the line bundle $K_M^{-1}$ where $q$ and $q^0$ are metrics such as $\omega_{q^0}=\omega_{g^0}$ and $\omega_{q}=\omega_{g}$, we have, since the function $\psi$ is $K$-invariant,
\begin{equation}\label{Olivier1}
\forall x \in \aaa_1,~~ u(x)=u^0(x) + \psi(\exp(x)H).
\end{equation}
Thus, the equation \eqref{MAES} is written on $P/H$, using the equation \eqref{omegantildehoro}, in the form
\begin{align*}
\prod_{\alpha \in \Phi^+_P} \langle \alpha , \nabla u(x) + 4 t_{\rho_P} \rangle & \cfrac{\MA_\RR(u)(x)}{4^r \, 2^{\operatorname{Card}(\Phi^+_P)}} \cdot \Omega = \\
& e^{h- \psi - (\nabla u(x),  \xi )} \prod_{\alpha \in \Phi^+_P} \langle \alpha , \nabla u^0(x) + 4 t_{\rho_P} \rangle \cfrac{\MA_\RR(u^0)(x)}{4^r \, 2^{\operatorname{Card}(\Phi^+_P)}} \cdot \Omega. 
\end{align*}
We can simplify this expression. To do this, we use the formulas \eqref {renormal} and \eqref{omegansr} which are written on $P/H$ in the form
$$
e^h \, \prod_{\alpha \in \Phi^+_P} \langle \alpha , \nabla u^0(x) + 4 t_{\rho_P} \rangle \cfrac{\MA_\RR(u^0)(x)}{4^r \, 2^{\operatorname{Card}(\Phi^+_P)}} \cdot \Omega = e^{-u^0} \, \Omega.
$$
and since $u=u^0 + \psi$ we get
\begin{equation}
\MA_\RR(u)(x) \cdot \cfrac{\prod_{\alpha \in \Phi^+_P} \langle \alpha , \nabla u(x) + 4 t_{\rho_P} \rangle}{4^r \, 2^{\operatorname{Card}(\Phi^+_P)}} = e^{-u - (  \nabla u(x),  \xi )}.
\end{equation}

\subsection{Continuity method}

We now want to prove the existence of Kähler-Ricci solitons. To do this, we will use the continuity method. To begin, we introduce into the Monge-Ampère equation a parameter $t \in[0,1]$:
\begin{equation}\label{MAEt}
\left \{
\begin{array}{rll}
 \det(g^{0}_{i \overline{j}} + \psi_{i \overline{j}} ) &= \det (g^{0}_{i \overline{j}}) \exp( h - \tilde{\theta}_X - X(\psi) -t \psi) \\
     (g^{0}_{i \overline{j}} + \psi_{i \overline{j}} )&>0. \\
     \end{array}  
  \right.
 \end{equation}
 Note that the equation \eqref{MAES} is the previous equation with $ t = $1. In addition, if a solution exists at time $ t $, we denote it by $ \psi_t $. Since $ \omega_{g^{0}} + \sqrt{-1} \, \overline{\partial} \partial \, \psi_t $ defines a $K$-invariant kählerian metric, it has a convex potential $ u_t $ defined by the equation \eqref{potudef} for a metric $q_t$ on $K_M^{-1}$ such that $\omega_{q_t} = \omega_{g_t}$. By making the same simplification as before, the first equation of \eqref{MAEt} can be written as
\begin{equation}\label{Alice1}
\MA_\RR(u_t)(x) \cdot  \cfrac{\prod_{\alpha \in \Phi^+_P} \langle \alpha , \nabla u(x) + 4 t_{\rho_P} \rangle }{4^r \, 2^{\operatorname{Card}(\Phi^+_P)}} = \exp \left[  - \left( u^0(x) +t \, u(x) \right) - ( \nabla u(x),  \xi ) \right].
\end{equation}

Finally, by denoting $ w_t = t \cdot u_t + (1-t) \cdot u ^ 0$, we have $u_t = u^0 + \psi_t$ and $w_t= u^0 + t \psi_t$, 
so we get the following real Monge-Ampère equation :
\begin{equation}\label{MAERt1}
\MA_\RR(u_t)(x) \cdot  \cfrac{\prod_{\alpha \in \Phi^+_P} \langle \alpha , \nabla u_t(x) + 4 t_{\rho_P} \rangle }{4^r \, 2^{\operatorname{Card}(\Phi^+_P)}} = \exp \left[ -w_t(x) - ( \nabla u_t(x),  \xi ) \right].
\end{equation}

Recall that the continuity method consists in considering the set $ S $ of times when there is a solution:
$$
S:=\lbrace t \in [0,1] ~:~ \text{there is a solution $\psi_t$ to the equation \eqref{MAEt} at time $t$} \big \rbrace,
$$
and to show that $ S $ is a non-empty closed and open set of $[0.1] $. The openness and the existence of a solution at $t=0$ come from the study of the Monge-Ampère equations made in \cite{Aub,Y}. You can also consult \cite{TZ1} for a study done in the case of the Kähler-Ricci solitons. Moreover, thanks to the Arzelà-Ascoli theorem, it is sufficient to have an a priori $\C^k$-estimate for any $k \geq 3$ for the potential $\psi_t$ to obtain the closeness (see for example section 3.1 of \cite{sze2} for more details). However, thanks to the work of Yau and Calabi done in Appendix A of \cite{Y}, we can reduce this problem to finding an $\C^0$-estimate. In addition, by the following Harnack inequality type (\cite{TZ1,WZ} for more details):
$
-\inf_M \psi_t \leq C (1+ \sup_M \psi_t),
$
a uniform upper bound for $\psi_t$ for times $t \in[0,1]$ is necessary. 

\subsection{Proof of the a priori estimate}

The approach used in this section is based on that of \cite{WZ}. We must prove an a priori estimate for $ t \in[0,1] $. Now, using the fact that $ 0 \in S $ and $ S $ is open, we can reduce this to proving the a priori estimate on $[t_0,1] $ for $ t_0> 0 $. Such a $ t_0 $ is fixed for the rest of the section. In addition, for simplicity, we note $r$ the real dimension of the vector space $\aaa_1$ and we choose a base $(a_1,\cdots,a_{r})$ of $\aaa_1$ with $(x_1,\cdots,x_r)$ the associated coordinates in $\aaa_1$. So we identify $\aaa_1$ with $\RR^r$. We note $\Vert \cdot \Vert$ the usual Euclidean standard on $\RR^r$. We start with a lemma that will be useful to later on.
\begin{lemma}\label{intw=0}
We have 
$$
\forall \zeta \in \aaa_1,~~ \int_{\aaa_1} \cfrac{\partial w_t}{\partial \zeta} \, e^{-w_t}dx=0.
$$
\end{lemma}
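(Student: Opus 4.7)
The plan is to recognize the integrand as a total derivative and conclude by integration by parts, with the only substantive work being to verify that $e^{-w_t}$ decays at infinity fast enough for the boundary terms to vanish. Explicitly, the chain rule gives
\[
\frac{\partial w_t}{\partial \zeta}\, e^{-w_t} = -\frac{\partial}{\partial \zeta}\bigl(e^{-w_t}\bigr),
\]
so the lemma reduces to the identity $\int_{\aaa_1} \partial_\zeta(e^{-w_t})\, dx = 0$.

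The bulk of the argument is to establish exponential decay of $e^{-w_t}$. Since $w_t = t\, u_t + (1-t)\, u^0$, I apply proposition \ref{potconv} both to the reference potential $u^0$ (associated with $q^0$) and to the potential $u_t$ (associated with the $K$-invariant metric $q_t$): each of $u_t - v_{2\Delta}$ and $u^0 - v_{2\Delta}$ is bounded on $\aaa_1$, and consequently so is $w_t - v_{2\Delta}$. By \eqref{0inD}, $0 \in \operatorname{Int}(\Delta)$, so some ball $B(0, \epsilon)$ is contained in $2\Delta$, and then
\[
v_{2\Delta}(x) = \sup_{p \in 2\Delta}(x, p) \;\geq\; \epsilon\, \Vert x \Vert.
\]
Combining, there exists $C > 0$ with $w_t(x) \geq \epsilon\, \Vert x \Vert - C$ for every $x \in \aaa_1$, whence $e^{-w_t(x)} \leq C' e^{-\epsilon\, \Vert x \Vert}$.

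Integrability of $\partial_\zeta(e^{-w_t})$ then follows because $\partial_\zeta w_t = t\,(\nabla u_t, \zeta) + (1-t)\,(\nabla u^0, \zeta)$ is uniformly bounded on $\aaa_1$ (both gradients take values in the compact polytope $2\Delta$, again by proposition \ref{potconv}) while $e^{-w_t}$ decays exponentially. Picking a basis of $\aaa_1 \simeq \RR^r$ whose first vector is parallel to $\zeta$, Fubini's theorem and the fundamental theorem of calculus yield
\[
\int_{\aaa_1} \frac{\partial}{\partial \zeta}\bigl(e^{-w_t}\bigr) dx = \int_{\RR^{r-1}} \bigl[\, e^{-w_t(x)}\,\bigr]_{x_1 = -\infty}^{+\infty} dx_2 \cdots dx_r = 0,
\]
the inner bracket vanishing by the exponential decay established above. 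This gives the lemma.

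The only genuinely non-trivial step is the growth estimate on $w_t$, which is the main obstacle, though it is a quick consequence of proposition \ref{potconv} and the interior condition \eqref{0inD}. Alternatively, and equivalently, one may simply observe that $s \mapsto \int_{\aaa_1} e^{-w_t(x + s\zeta)}\, dx$ is constant in $s$ by translation invariance of Lebesgue measure, and differentiate under the integral sign at $s = 0$: the interchange is legitimate by the same uniform bound on $\partial_\zeta w_t$ together with the exponential decay of $e^{-w_t}$, and the lemma drops out immediately.
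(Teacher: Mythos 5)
Your proof is correct and follows essentially the same route as the paper: recognize $\frac{\partial w_t}{\partial\zeta}e^{-w_t}$ as $-\partial_\zeta(e^{-w_t})$, integrate along lines via Fubini, and kill the boundary terms using $w_t \geq v_{2\Delta} - C$ (from proposition \ref{potconv}) together with $0 \in \operatorname{Int}(2\Delta)$. Your radial bound $v_{2\Delta}(x)\geq \epsilon\Vert x\Vert$ is a slightly cleaner packaging of the paper's coordinate-wise choice of points $p_1,p_2\in 2\Delta$ with $(a_i,p_1)>0$ and $(a_i,p_2)<0$, but the underlying argument is the same.
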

\begin{proof}
It is sufficient by linearity to show:
$$
\forall i=1,\cdots,r,~~\int_{\RR^{r}} \cfrac{\partial w_t}{\partial x_i} \, e^{-w_t}dx=0.
$$
We can write thanks to Fubini theorem:
\begin{align*}
\int_{\RR^{r}} \cfrac{\partial w_t}{\partial x_i} \, e^{-w_t}dx &= -\int_{\RR^{r}} \cfrac{\partial e^{-w_t}}{\partial x_i}dx 
=  
\int_{\RR^{r-1}} \left( \int^{+ \infty}_{-\infty} \cfrac{\partial e^{-w_{t,i}}}{\partial x_i} \, dx_i \right) dx_1 \cdots \widehat{dx_i} \cdots dx_n \\
&= \int_{\RR^{r-1}} \left[ e^{-w_{t,i}} \right]^{+ \infty}_{- \infty} dx_1 \cdots \widehat{dx_i} \cdots dx_n
\end{align*}
where $w_{t,i} : s \in \RR \mapsto w_t( x_1, \cdots, x_{i-1}, s, x_{i+1}, \cdots,x_n) \in \RR$ and where the other coordinates $x_k$ are fixed. To conclude, it is enough to show that
$$
\lim_{ s \rightarrow \pm \infty} e^{-w_{i,t}(s)}=0.
$$
To prove this, we notice that, by definition of the function $w_t$ and thanks to proposition \ref{potconv} applied to $u^0$ and $u$, we have
$$
e^{-w_t(x)} = \left(e^{-u^0} \right)^{1-t} \cdot \left( e^{-u} \right)^t \leq C e^{-v_{2 \Delta}(x)}, ~~\forall x \in \RR^{r}n
$$
where $C$ is an independent constant of $x$ and $v_{2 \Delta}$ is the support function of $2 \Delta$ i.e. $v_{2 \Delta}(x)= \sup_{p \in 2 \Delta} (x,p)$.
So we have for any $p \in 2 \Delta$
\begin{align*}
v_{2 \Delta}(x) &\geq (x,p)
=  x_i (a_i,p) + \sum_{j=1, j \neq i} x_j (a_j,p)  
 \geq x_i (a_i,p) + \inf_{p \in 2 \Delta} \left( \sum_{j=1, j \neq i} x_j (a_j,p) \right). 
\end{align*}
Finally, we get
$$
\forall p \in 2 \Delta,~~ 0 \leq e^{-w_{t,i}(s)} \leq \tilde{C} e^{-s (a_i,p)},
$$
where $\tilde{C}$ is an independent constant of $t$. To conclude, it is enough to notice that, thanks to the fact that $0 \in 2 \Delta$, there is a ball centered in $0$ with a radius $\delta >0$ contained in $2 \Delta$ and therefore there is $p_1 \in 2 \Delta$ such that $(a_i,p_1)>0$ and $p_2 \in 2 \Delta$ such that $(a_i,p_2)<0$.
\end{proof}
\begin{lemma}\label{wtext}
The function $w_t$ obtains its minimum $m_t$ at a point $x_t \in \aaa_1$ .
\end{lemma}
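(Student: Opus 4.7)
The plan is to reduce the existence of a minimum for $w_t$ to a coercivity statement on $\aaa_1 \simeq \RR^r$: since $w_t$ is continuous, it suffices to show $w_t(x) \to +\infty$ as $\|x\| \to +\infty$. I would first record that $w_t = t u_t + (1-t) u^0$ is smooth and convex, since both $u^0$ and $u_t$ are smooth and strictly convex by Proposition \ref{potconv} (and in fact for $t < 1$ the function $w_t$ is strictly convex, which will give uniqueness of the minimizer as a bonus).

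The heart of the argument is coercivity. Proposition \ref{potconv}, applied to the metrics $q^0$ and $q_t$, yields that both $u^0 - v_{2\Delta}$ and $u_t - v_{2\Delta}$ are bounded on $\aaa_1$. Forming the convex combination with coefficients $t$ and $1-t$,
$$
w_t - v_{2\Delta} = t(u_t - v_{2\Delta}) + (1-t)(u^0 - v_{2\Delta}),
$$
is again bounded on $\aaa_1$, uniformly in $x$. So coercivity of $w_t$ is equivalent to coercivity of the support function $v_{2\Delta}$.

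For this last point I would invoke equation \eqref{0inD}: since $0 \in \operatorname{Int}(2\Delta)$, some closed ball of radius $\delta>0$ centered at $0$ (for the scalar product identifying $\aaa_1$ with $\aaa_1^*$) lies in $2\Delta$. Taking $p := \delta\, x/\|x\|$ in the definition of $v_{2\Delta}$ gives
$$
v_{2\Delta}(x) = \sup_{p \in 2\Delta}(x,p) \geq (x, \delta\, x/\|x\|) = \delta \|x\|,
$$
so $v_{2\Delta}(x) \to +\infty$ as $\|x\| \to +\infty$. Combined with the boundedness of $w_t - v_{2\Delta}$, this forces $w_t$ to attain its infimum at some $x_t \in \aaa_1$, and we set $m_t := w_t(x_t)$.

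I do not anticipate any real obstacle: the statement is essentially a repackaging of the $C^0$-estimate $u - v_{2\Delta} = O(1)$ from Proposition \ref{potconv} combined with the geometric input $0 \in \operatorname{Int}(2\Delta)$. A slightly more conceptual alternative, avoiding the coercivity computation altogether, would be to use Lemma \ref{intw=0} directly: if $\nabla w_t$ missed $0$, strict convexity would make its image an open convex subset of $\aaa_1^*$ not containing $0$, hence separated from $0$ by a hyperplane $\lbrace \cdot = 0 \rbrace$ associated to some $\zeta \in \aaa_1$, giving $(\nabla w_t, \zeta) \geq 0$ everywhere and $>0$ somewhere; integrating against the positive density $e^{-w_t}$ would then contradict $\int (\partial w_t/\partial \zeta) e^{-w_t}\, dx = 0$.
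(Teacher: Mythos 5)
Your proof is correct, but it follows a genuinely different route from the paper's. The paper argues via critical points: it notes that $w_t$ is convex as a convex combination of the convex functions $u_t$ and $u^0$, claims that $\nabla w_t(\aaa_1)=t\,\nabla u_t(\aaa_1)+(1-t)\,\nabla u^0(\aaa_1)=2\operatorname{Int}(\Delta)$, invokes $0\in 2\operatorname{Int}(\Delta)$ from equation \eqref{0inD} to produce a critical point, and concludes because a convex function on $\RR^r$ with a critical point attains its global minimum there. You instead prove coercivity: $w_t-v_{2\Delta}$ is bounded as a convex combination of the two bounded differences furnished by Proposition \ref{potconv}, and $v_{2\Delta}(x)\geq \delta\Vert x\Vert$ because $0\in\operatorname{Int}(2\Delta)$, so $w_t(x)\to+\infty$ as $\Vert x\Vert\to\infty$ and the minimum exists. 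Both arguments rest on the same two inputs ($0\in\operatorname{Int}(2\Delta)$ and the structure of the potentials), but yours sidesteps the paper's slightly delicate identification of the image of $\nabla w_t$ with a Minkowski combination of the images of $\nabla u_t$ and $\nabla u^0$, and it yields the quantitative lower bound $w_t(x)\geq\delta\Vert x\Vert-C$, which is essentially the estimate the paper already needs (and re-derives) in the proof of Lemma \ref{intw=0}. Your sketched alternative via Lemma \ref{intw=0} is closer in spirit to the paper's proof, but note that the separation step there implicitly requires convexity of the image of $\nabla w_t$ (or of a suitable closed convex hull avoiding $0$), which deserves justification; your main coercivity argument is the cleaner and fully rigorous one.
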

\begin{proof}
The demonstration is based on the fact that a convex function on $ \RR^{r} $ that admits a critical point admits a global minimum. To apply this result, we notice that $ w_t $ is a convex function because it is the barycenter of the convex functions $ u $ and $ u^0 $ (see the proposition \ref{potconv}). To conclude, it is therefore sufficient to show that $ 0 \in \nabla w_t (\RR^n) $. To this end, we note that
\begin{align*}
 \nabla w_t(\aaa_1)&= t \nabla u(\aaa_1) + (1-t) \nabla u^0 (\aaa_1) = 2\, t \operatorname{Int}(\Delta) +2 \, (1-t) \,  \operatorname{Int}(\Delta) = 2 \operatorname{Int}(\Delta) 
 \end{align*}
but we have $0 \in 2 \operatorname{Int}(\Delta)$ (see equation\eqref{0inD})
\end{proof}
\begin{lemma}\label{mt<C}
We have :
$$
\exists C>0, \forall t \in [t_0,1], m_t \leq C.
$$
\end{lemma}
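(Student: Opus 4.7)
The goal is to find a point $x_0 \in \aaa_1$, lying in a $t$-independent bounded region, at which $w_t(x_0) \leq C$ uniformly in $t \in [t_0, 1]$; since $m_t = \min w_t \leq w_t(x_0)$, the conclusion follows at once. Such a point will be produced by a mean-value/pigeonhole argument on the integral $\int_{\aaa_1} e^{-w_t(x)}\,dx$, which is in turn computed from the real Monge--Amp\`ere equation \eqref{MAERt1}.

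The first step is to integrate \eqref{MAERt1} multiplied by $e^{(\nabla u_t, \xi)}$ over $\aaa_1$. Using the change of variable $p = \nabla u_t(x)$, which by Proposition~\ref{potconv} is a diffeomorphism from $\aaa_1$ onto $2\operatorname{Int}(\Delta)$ with Jacobian $\MA_\RR(u_t)(x)$, one obtains
$$
\int_{\aaa_1} e^{-w_t(x)}\,dx \;=\; \int_{2\Delta} e^{(p,\xi)} \prod_{\alpha \in \Phi^+_P} \frac{\langle \alpha,\, p + 4\,t_{\rho_P}\rangle}{4^r \cdot 2^{|\Phi^+_P|}}\, dp \;=:\; A,
$$
a strictly positive constant independent of $t$ (positivity on $\operatorname{Int}(2\Delta)$ rests on $0 \in \operatorname{Int}(2\Delta)$, i.e.\ \eqref{0inD}, together with the sign information recorded in \eqref{f>0}).

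The second step is a pigeonhole argument. I claim that there exists a fixed ball $B_r(0) \subset \aaa_1$ with $\int_{B_r(0)} e^{-w_t(x)}\,dx \geq A/2$ uniformly in $t \in [t_0, 1]$. Granting this, the mean-value inequality yields an $x_0 \in B_r(0)$ with $e^{-w_t(x_0)} \geq A/(2|B_r(0)|)$, hence
$$
w_t(x_0) \;\leq\; \log\!\bigl(2\,|B_r(0)|/A\bigr) \;=:\; C,
$$
and $m_t \leq w_t(x_0) \leq C$ is the desired upper bound.

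The claim reduces to a \emph{uniform-in-$t$} tail estimate $\int_{\aaa_1 \setminus B_r(0)} e^{-w_t}\,dx \to 0$ as $r \to \infty$, i.e.\ to a uniform coerciveness of the form $w_t(x) \geq \delta |x| - C_0$ with $\delta, C_0$ independent of $t$. This is the main obstacle, since the bound $u_t \geq v_{2\Delta} - C_{u_t}$ given by Proposition~\ref{potconv} is a priori $t$-dependent (and a uniform version would itself amount to half of the $C^0$ estimate). The way around this uses the fact, visible in the first step, that the pushforward measure $(\nabla u_t)_*\bigl(e^{-w_t}dx\bigr)$ on $2\Delta$ is the \emph{fixed}, $t$-independent measure $e^{(p,\xi)} \prod_{\alpha}\langle \alpha, p+4t_{\rho_P}\rangle/(4^r 2^{|\Phi^+_P|})\,dp$; combined with the convexity of $w_t$, the uniform Lipschitz bound $|\nabla w_t| \leq \operatorname{diam}(2\Delta)$ inherited from $\nabla w_t(\aaa_1) = 2\operatorname{Int}(\Delta)$, and the linear growth $v_{2\Delta}(x) \geq \delta|x|$ coming from $0 \in \operatorname{Int}(2\Delta)$, this fixed measure-theoretic data on the polytope translates into the required uniform growth of $w_t$ in $\aaa_1$ and closes the argument.
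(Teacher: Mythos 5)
Your first step --- integrating the real Monge--Amp\`ere equation \eqref{MAERt1} against $e^{(\nabla u_t,\xi)}$ and changing variables $p=\nabla u_t(x)$ to see that $\int_{\aaa_1}e^{-w_t}\,dx$ is a fixed positive constant --- is correct and is also the starting point of the paper's argument. The gap is in your second step. The pigeonhole on a fixed ball $B_r(0)$ requires a tail estimate that is uniform in $t$, which you correctly identify as equivalent to a uniform coercivity $w_t(x)\geq\delta\Vert x\Vert-C_0$; but the closing sentence, which claims to extract this from the $t$-independence of the pushforward measure together with convexity, is an assertion rather than a proof, and it cannot be made to work. The pushforward of $e^{-w_t}\,dx$ under $\nabla u_t$ lives on the polytope $2\Delta$ and carries no information about \emph{where} in $\aaa_1$ the well of $w_t$ sits: the boundedness of the minimizer $x^t$ is precisely Lemma \ref{xt<C}, which the paper proves \emph{after} this lemma and only by playing the Futaki identity \eqref{Paul5} against the positivity \eqref{Paul7}. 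For $t=1$ one has $w_1=u_1$, and a uniform bound $u_1\geq v_{2\Delta}-C_0$ is essentially the $C^0$ estimate being sought, so assuming it here is circular.

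Moreover, even if you recenter the ball at $x^t$, the argument still fails, because the mean-value step needs the mass to concentrate in a ball of $t$-independent radius, and nothing you have invoked prevents $w_t$ from being very flat near its minimum. If $A_0=\lbrace w_t\leq m_t+1\rbrace$ has huge volume, the identity $\int e^{-w_t}=A$ only forces $\operatorname{Vol}(A_0)\leq Ae^{m_t+1}$, which is enormous exactly in the regime $m_t\gg1$ that you are trying to exclude; and the Lipschitz bound $\Vert\nabla w_t\Vert\leq\operatorname{diam}(2\Delta)$ gives the \emph{upper} bound $w_t\leq m_t+d\Vert x-x^t\Vert$, which points the wrong way for a tail estimate on $e^{-w_t}$. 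The missing ingredient, which is the actual content of the paper's proof, is an upper bound $\operatorname{Vol}(A_0)\lesssim e^{m_t/2}$ extracted from the equation itself: on $A_0$ one has $\det((w_t)_{ij})\geq C_0e^{-m_t}$ by Minkowski's inequality and \eqref{MAERt1}, and the comparison principle for real Monge--Amp\`ere equations, applied after normalizing $A_0$ by its John ellipsoid (Lemma \ref{ellip}), bounds the radius of the normalized sublevel set by $Ce^{m_t/2r}$. Summing over the level sets $A_k\subset(k+1)\cdot A_0$ then yields $A=\int e^{-w_t}\leq C_1e^{-m_t/2}$, whence $m_t\leq C$. Without this volume control your mean-value inequality has no fixed ball to act on, so the proposal does not establish the lemma.
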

Before starting the proof, we recall a result concerning convex domains that will be useful to us.
\begin{lemma}[\cite{Mi}]\label{ellip}
Let  $\Omega$ be a convex bounded domain of $\RR^n$. Then there is a unique ellipsoid $E$, called the minimum ellipsoid of $\Omega$, whose volume is minimal among ellipsoids containing $\Omega$. In addition, $E$ satisfies
$$
\cfrac{1}{n} E \subset \Omega \subset E.
$$
Let $T$ be an affine transformation where the vector part has determinant egal to $1$, preserving the center $x_0$ of $E$ i.e. $T(x)=T_{vect}(x-x_0)+x_0$ for a matrix $T_{vect} \in \operatorname{GL_n}(\RR)$ and such that $T(E)$ is a ball $B_R$ with center $x_0$ for a certain $R>0$ (depending on $E$). In particular, we have $B_{R/n} \subset T(\Omega) \subset B_R$.
\end{lemma}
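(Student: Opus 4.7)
The plan is to establish the three assertions of the lemma in turn: existence and uniqueness of the minimum-volume containing ellipsoid $E$, the sandwich inclusion $\frac{1}{n}E \subset \Omega \subset E$, and the affine normalization via the volume-preserving map $T$. The inclusion $\Omega \subset E$ is tautological from the definition of $E$.

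For existence and uniqueness I parameterize ellipsoids of $\RR^n$ as
\[
E(Q,d) := \{x \in \RR^n : x^T Q x + 2 d^T x + d^T Q^{-1} d \leq 1\}
\]
with $Q$ symmetric positive-definite and $d \in \RR^n$ (center $c = -Q^{-1} d$, volume proportional to $(\det Q)^{-1/2}$). Two facts are key. First, for each fixed $x \in \RR^n$ the map $(Q,d) \mapsto x^T Q x + 2 d^T x + d^T Q^{-1} d$ is convex on $\{Q \succ 0\} \times \RR^n$: the only nontrivial term $d^T Q^{-1} d$ has convex epigraph by the Schur-complement identity $d^T Q^{-1} d \leq t \Longleftrightarrow \bigl(\begin{smallmatrix} Q & d \\ d^T & t \end{smallmatrix}\bigr) \succeq 0$. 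Hence the feasible set $\mathcal{F}(\Omega) := \{(Q,d) : \Omega \subset E(Q,d)\}$ is convex. Second, $Q \mapsto \log\det Q$ is strictly concave on $\{Q \succ 0\}$. Minimizing the volume thus amounts to maximizing a strictly concave functional over a convex set, which is nonempty because $\Omega$ is bounded. A standard coercivity-plus-compactness argument (eigenvalues of $Q$ bounded above by the diameter of $\Omega$ via the inclusion, and bounded below because $\Omega$ has nonempty interior; the degenerate case reduces to the affine hull of $\Omega$) produces a maximizer, and strict concavity forces uniqueness.

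For $\frac{1}{n}E \subset \Omega$ I argue by contradiction, using affine invariance of the problem to normalize $E = B_1$, the unit ball at the origin. If some $p \in B_{1/n}$ lies outside $\Omega$, Hahn--Banach separation gives a unit vector $v$ and $a \in \RR$ with $\Omega \subset \{x : \langle x,v\rangle \leq a\}$ and $\langle p,v\rangle > a$; since $\|p\| < 1/n$ this forces $a < 1/n$, so $\Omega$ lies in the spherical cap $C_a := B_1 \cap \{\langle x,v\rangle \leq a\}$. It suffices to construct an ellipsoid $E'$ with $\operatorname{vol}(E') < \operatorname{vol}(B_1)$ and $C_a \subset E'$, contradicting the minimality of $B_1$. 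Rotating to $v = e_1$, I take $E'$ axisymmetric with $e_1$-axis running from $-1$ to $a+\delta$ (for small $\delta>0$, so with $e_1$-semi-axis $\alpha=(1+a+\delta)/2$) and transverse semi-axis $\beta$ equal to the minimal value forcing $E'$ to cover the equatorial disk $\{x_1=a,\, \|x'\|\leq \sqrt{1-a^2}\}$ of $C_a$; a concavity check on the difference of the two transverse-radius functions shows this $E'$ covers all of $C_a$ precisely when $\beta > \alpha$, i.e.\ when $\delta < 1-a$. Optimizing the ratio $\operatorname{vol}(E')/\operatorname{vol}(B_1) = \alpha \beta^{n-1}$ in $\delta$ yields the critical value $\delta^\star = (n-1)(a+1)/(n+1)$ and the explicit bound
\[
\frac{\operatorname{vol}(E')}{\operatorname{vol}(B_1)} = \left(\frac{a+1}{n+1}\right)^{(n+1)/2} \left(\frac{1-a}{n-1}\right)^{(n-1)/2} n^n,
\]
which equals $1$ at $a=1/n$ and is strictly less than $1$ for $a<1/n$. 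Moreover the side condition $\delta^\star < 1-a$ is itself equivalent to $a<1/n$, so $E'$ genuinely covers $C_a$, and the contradiction closes.

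For the final affine claim, given $E = \{x : (x-x_0)^T Q^{-1}(x-x_0) \leq 1\}$ I take $T_{\text{vect}} := (\det Q)^{1/(2n)} Q^{-1/2}$: then $\det T_{\text{vect}} = (\det Q)^{1/2}(\det Q)^{-1/2} = 1$ and $T_{\text{vect}} Q T_{\text{vect}}^T = R^2 I$ with $R = (\det Q)^{1/(2n)}$, so $T(x) = T_{\text{vect}}(x-x_0)+x_0$ sends $E$ to the ball $B_R$ centered at $x_0$. Since $T$ fixes $x_0$ it commutes with dilations about $x_0$, and applying $T$ to $\frac{1}{n}E \subset \Omega \subset E$ yields $B_{R/n} \subset T(\Omega) \subset B_R$. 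I expect the main obstacle to lie in the explicit covering construction of the second step: the balance between the volume gained by shortening $E'$ in the separating direction and the volume lost by enlarging it transversely is exactly what produces the sharp constant $1/n$, and the reason one cannot do better.
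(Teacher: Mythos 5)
The paper does not actually prove this lemma: it is John's theorem on the minimal-volume (Löwner--John) ellipsoid, quoted from \cite{Mi}, and the proof environment that follows it in the source is the proof of Lemma \ref{mt<C}, not of Lemma \ref{ellip}. So there is no in-paper argument to compare yours against; what you have supplied is a self-contained proof of a cited classical result. As such it is essentially correct and follows the standard route: existence and uniqueness by maximizing $\log\det Q$ over the convex feasible set of containing ellipsoids, the factor $1/n$ by separating a putative missing point of $B_{1/n}$ and covering the resulting spherical cap by a strictly smaller ellipsoid, and the normalization by the explicit map $T_{\mathrm{vect}}=(\det Q)^{1/(2n)}Q^{-1/2}$. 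The cap computation checks out: the minimal transverse semi-axis satisfies $\beta^2=(1-a)\alpha^2/\delta$, concavity of $s\mapsto \beta^2\bigl(1-(s-c)^2/\alpha^2\bigr)-(1-s^2)$ together with its vanishing at $s=-1$ and $s=a$ gives the covering of the whole cap, the optimizer $\delta^\star=(n-1)(a+1)/(n+1)$ and the volume ratio are as you state, and the side condition $\delta^\star<1-a$ is indeed equivalent to $a<1/n$, which is the pleasant coincidence that makes the construction close.

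The one point that is genuinely incomplete as written is uniqueness. Strict concavity of $Q\mapsto\log\det Q$ forces uniqueness of the shape matrix $Q$ of the optimal ellipsoid, but the objective does not depend on $d$, so it says nothing about the center: two optimizers $(Q,d_1)\neq(Q,d_2)$ are not excluded by ``strict concavity forces uniqueness.'' The fix is short and uses a tool you already introduced: for fixed $Q\succ0$ the map $d\mapsto d^TQ^{-1}d$ is \emph{strictly} convex, so for $d_1\neq d_2$ the midpoint ellipsoid satisfies $\sup_{x\in\overline\Omega}\bigl(x^TQx+2\bar d^Tx+\bar d^TQ^{-1}\bar d\bigr)<1$ by compactness of $\overline\Omega$; one can then dilate this ellipsoid about its center by a factor slightly less than $1$ and obtain a feasible ellipsoid of strictly smaller volume, contradicting optimality. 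You should also say explicitly that the dilation $\frac1nE$ is taken about the center of $E$ (your normalization $E=B_1$ centered at the origin silently uses this), and note that for the open domain $\Omega$ the inclusion $\frac1nE\subset\Omega$ should really be read with $\overline\Omega$ on the right; both are conventions of the statement rather than defects of your argument.
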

\begin{proof}
We define, for all $k \in \NN$
$$
A_k := \lbrace x \in \RR^r ~/~ m_t + k \leq w_t(x) \leq m_t + k +1 \rbrace,
$$
and we have the following elementary properties:
\begin{itemize}
\item[$\bullet$] the set $A_k$ is bounded for $k \geq 0$ and $\bigcup_{k \in \NN} A_k = \aaa_1$
\item[$\bullet$] $x_t \in A_0$.
\item[$\bullet$] $\bigcup_{i=0}^k A_i$ is convex for every $k \geq 0$.
\end{itemize}
In addition, since $u_t$ and $u^0$ are convex, the matrices $((u_t)_{ij})_{1 \leq i,j \leq r }$ and $(u^{0}_{ij})_{1 \leq i,j \leq r }$ are positive and so we get (this is a direct consequence of Minkowski's formula, see for example page 115 of \cite{marcus1992survey})
$$
\det( (w_t)_{ij})= \det( t \, (u_t)_{ij} + (1-t)u^0_{ij}) \geq  \det( (u_t)_{ij})  + \det ((1-t)u^0_{ij}).
$$ 
Since $\det ((1-t)u^0_{ij}) \geq 0$, we get, using equation \eqref{MAERt1},
\begin{align*}
\det( (w_t)_{ij}) & \geq  \det( t  (u_t)_{ij}) = t^{r} \, \det( (u_t)_{ij})  = \frac{e^{-w_t(x)- (\nabla u_t(x), \xi )}}{ \prod_{\alpha \in \Phi_P^+} \langle \alpha, \nabla u_t(x) + 4 t_{\rho_P} \rangle}  \left(4^r 2^{\operatorname{Card}(\Phi^+_P)} \right) \\ 
& \geq t^{r} \, \cfrac{4^r 2^{\operatorname{Card}(\Phi^+_P)}}{d'} \,  e^{d} \, e^{-w_t} & \\
\end{align*}
where
$$
d= \inf \left\lbrace ( p, \xi ) ~/~ p \in -2 \Delta^+ - 4 \rho_P \right\rbrace \in \RR 
$$
and, thanks to the equation \eqref{f>0},
$$
0 < d'= \sup_{ p \in -2 \Delta^+} \left\lbrace \prod_{\alpha \in \Phi^+_P} \langle \alpha , p \rangle \right\rbrace < + \infty.
$$
Let $T$ and $R$ be as in lemma \ref{ellip} for $ \Omega=A_0 $ so that
\begin{equation}\label{bluff1}
B_{R/r} \subset T(A_0) \subset B_R.
\end{equation}
and so
\begin{equation}\label{Lea2}
\det( (w_t)_{ij})  \geq  C_0 e^{-m_t} ~~ \text{dans $T(A_0)$}.
\end{equation}
We want to prove
\begin{equation}\label{Paul2}
R \leq \sqrt{2} r C_0^{-1/2r} e^{m_t/2r}.
\end{equation}
Indeed, we define
$$
v : y \in \aaa_1   \longmapsto   \cfrac{1}{2} \, C_0^{1/r} e^{-m_t/r} \left[ \Vert y - y_t \Vert^2 - \left(\cfrac{R}{r} \right)^2 \right] + m_t +1 \in \RR
$$
where $y_t$ is the center of the minimal ellipsoid of $A_0$. A direct computation gives
$
\det(v_{ij})= C_0 e^{-m_t} ~~ \text{ in $T(A_0)$,}
$
and, by using equation \eqref{Lea2}, we get
$
\det(v_{ij}) \leq \det((w_t)_{ij}) \text{ in $ T(A_0)$ },
$
and
$
v(y) \geq m_t +1 \geq w_t(y)  \text { sur $ \partial T(A_0)$}.
$
We then apply the principle of comparison for the real Monge-Ampère equations (see for example \cite{gutierrez}) and we obtain
$
v \geq w
$
sur $T(A_0)$. In particular, we get
$$
m_t  \leq v(y_t) = - \cfrac{1}{2}C_0^{1/r}e^{-m_t/r} \left(\cfrac{R}{r} \right)^2 + m_t+1.
$$
Now, thanks to the convexity of $w$, we get
$$
A_k \subset \bigcup_{i=0}^k A_k \subset (k+1) \cdot A_0,
$$
where $(k+1) \cdot A_0$ is the dilation of $A_0$ of factor $(k+1)$. Moreover, thanks to the equation \eqref{bluff1} and the fact that $w_t$ is convex, we obtain 
$$
T(A_k) \subset  T( ((k+1) \cdot A_0)) \subset (k+1) \cdot T(A_0) = B_{(k+1)R}.
$$
Now, if we note $\omega_{r}$ the volume of the unit ball of $\RR^r$ then, using the equality $w_t \geq m_t + k$ on $A_k$, the fact that $T$ preserves the formula, the equality $T(A_k) \subset B_{(k+1)R})$ and the equation \eqref{Paul2}, we get
\begin{align*}
\int_{\aaa_1} e^{-w_t} & \leq \sum_k \int_{A_k} e^{-w_t} 
 \leq \sum_k e^{-m_t -k} \, \operatorname{Vol}(A_k)  = \sum_k e^{-m_t -k} \, \operatorname{Vol}(T(A_k)) \\
 & \leq  \omega_{r} \sum_k e^{-m_t -k} \left( (k+1)R \right)^{r} 
= \omega_{r}  \cfrac{(R)^{r}}{e^{m_t}} \sum_k \cfrac{(k+1)^{r}}{e^k}  \leq C_1 e^{-m_t/2},
\end{align*}
where $C_1>0$ is a time-independent constant. Finally, we get
\begin{align*}
e^{-m_t/2} & \geq \cfrac{1}{C_1} \int_{\aaa_1} e^{-w_t}.
\end{align*}
In addition, thanks to the equation \eqref{MAERt1} and the computation performed at the end of the demonstration of the proposition \ref{Paul6}, we have
\begin{align*}
e^{-m_t/2} & \geq \cfrac{1}{C_1}  \int_{\aaa_1} \MA_\RR(u_t)  e^{ ( \nabla u_t(x), \xi ) }\cfrac{\prod_{\alpha \in \Phi^+_P} \langle \alpha , \nabla u_t(x) + 4 t_{\rho_P} \rangle }{4^r \, 2^{\operatorname{Card}(\Phi^+_P)}} \, dx \\ &= \cfrac{1}{C_1}  \int_{\Delta^+} e^{\langle -2p- 4 \rho_P , \xi \rangle} \cfrac{ \prod_{\alpha \in \Phi^+_Q} ( \alpha , p ) }{4^r \, 2^{\operatorname{Card}(\Phi^+_P)}} \, dp =: \frac{C_2}{C_1} \\
\end{align*}
where $C_2$ is independent of the time $t$ and strictly positive since $\Delta^+$ has non-empty interior. Finally, we obtain a constant $C>0$ independent of $t$ such that
$
m_t \leq C.
$
\end{proof}
\begin{lemma}\label{mt>C}
We have 
$$
\exists \tilde{C}>0, \forall t \in [t_0,1],~~ m_t \geq \tilde{C}.
$$
\end{lemma}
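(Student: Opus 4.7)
My plan is to combine a uniform Lipschitz bound on $w_t$ with the identity $\int_{\aaa_1} e^{-w_t}\,dx = C_2$ already extracted in the proof of lemma \ref{mt<C}. Whereas the upper bound on $m_t$ exploited an estimate $\int e^{-w_t}\,dx \leq C_1 e^{-m_t/2}$ coming from the minimal ellipsoid argument, the lower bound will come from a matching lower bound $\int e^{-w_t}\,dx \geq C_3\, e^{-m_t}$ that follows directly from the fact that $w_t$ is uniformly Lipschitz.

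First I would observe that, by proposition \ref{potconv}, both $\nabla u_t$ and $\nabla u^0$ take values in the bounded convex set $2\Delta$, so
$$\nabla w_t(x) = t\,\nabla u_t(x) + (1-t)\,\nabla u^0(x) \in 2\Delta$$
for every $x \in \aaa_1$. Hence $w_t$ is $d$-Lipschitz with $d := \sup_{p \in 2\Delta}\|p\|$, a constant independent of $t$ (the same $d$ that controls $v_{2\Delta}$ in section 2). Since by lemma \ref{wtext} the function $w_t$ attains its minimum $m_t$ at some point $x_t \in \aaa_1$, the Lipschitz bound gives $w_t(x) \leq m_t + d\|x - x_t\|$, and integrating $e^{-w_t(x)} \geq e^{-m_t}\, e^{-d\|x - x_t\|}$ over $\aaa_1$ (after the translation $y = x - x_t$) yields
$$\int_{\aaa_1} e^{-w_t(x)}\,dx \,\geq\, C_3\, e^{-m_t}, \qquad C_3 := \int_{\aaa_1} e^{-d\|y\|}\,dy \,>\, 0,$$
with $C_3$ depending only on $r$ and $d$, hence independent of $t$.

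Next I would invoke the identity that follows from equation \eqref{MAERt1} by the change of variables $p' = \nabla u_t(x)$ (precisely the computation carried out at the end of the proof of lemma \ref{mt<C}):
$$\int_{\aaa_1} e^{-w_t(x)}\,dx = \int_{2\Delta} \frac{\prod_{\alpha \in \Phi^+_P}\langle \alpha, p' + 4 t_{\rho_P}\rangle}{4^r\, 2^{\operatorname{Card}(\Phi^+_P)}}\, e^{(p', \xi)}\,dp' = C_2,$$
a positive constant independent of $t$. Combining with the previous lower bound gives $C_3\, e^{-m_t} \leq C_2$, i.e.
$$m_t \,\geq\, \ln(C_3/C_2) \,=:\, \tilde{C},$$
as required.

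The only nontrivial point — and the main thing to verify — is that the compactness of the moment polytope $2\Delta$ provides a uniform Lipschitz constant for the whole family $\{w_t\}_{t \in [t_0,1]}$; given this, the lower bound is essentially immediate from the Monge–Ampère integral identity, and one avoids any delicate minimal-ellipsoid comparison of the type needed for the upper bound in lemma \ref{mt<C}.
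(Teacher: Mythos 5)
Your proposal is correct and follows essentially the same route as the paper: both rest on the uniform gradient bound $\nabla w_t(x)\in 2\Delta$ (hence a $t$-independent Lipschitz constant) combined with the $t$-independent identity $\int_{\aaa_1}e^{-w_t}\,dx=C_2$ extracted from equation \eqref{MAERt1}. The only cosmetic difference is how the bound $\int_{\aaa_1}e^{-w_t}\,dx\geq C_3\,e^{-m_t}$ is obtained --- you integrate the global pointwise estimate $e^{-w_t(x)}\geq e^{-m_t}e^{-d\Vert x-x^t\Vert}$, whereas the paper places a ball of fixed radius $1/d_0$ inside the sublevel set $A_0$ and uses a layer-cake computation --- and both versions are valid.
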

\begin{proof}
Here, the proof is not based on \cite{WZ} but on \cite{Donaldson} also used in proposition 6.19 of \cite{Delcroix}. First of all, recall that $\Vert \nabla w_t \Vert \leq d_0:= \sup \lbrace \Vert x \Vert ~:~  x \in 2 \Delta \rbrace$. Thus, by the mean value theorem, we have
$$
\forall x \in \aaa_1,~~  \vert w_t(x) - m_t \vert \leq d_0 \, \Vert x - x^t \Vert.
$$
Hence, for all $x \in B(x^t,\frac{1}{d_0})$, we get
$$
 \vert w_t(x) - m_t \vert \leq 1,
$$
so that $x \in A_0$ and
$$
B\left(x^t,\frac{1}{d_0} \right) \subset A_0. 
$$
And we get
\begin{equation}\label{dernierespoir1}
\int_{A_0} dx = \operatorname{Vol}(A_0) \geq \int_{B\left(x^t,\frac{1}{d_0} \right)} dx = \operatorname{Vol}(B\left(x^t,\frac{1}{d_0} \right)) = c,
\end{equation}
where $c$ is time independent since $\operatorname{Vol}(B(x^t,\frac{1}{d_0}) = \operatorname{Vol}(B(0,\frac{1}{d_0})$. Recall, thanks to the previous computation, that there is a time independent constant $C_2$  such that
$$
C_2= \int_{\aaa_1} e^{-w_t}dx .
$$
We get therefore
\begin{align*}
C_2 &= \int_{\aaa_1 } e^{-w_t}dx = \int_{\aaa_1} \int_{w_t(x)}^{+ \infty} e^{-s} ds dx = \int_{- \infty}^{+ \infty} e^{-s} \left( \int_{\aaa_1}  {1}_{ \lbrace w_t \leq s \rbrace} dx \right) ds \\ &=  \int_{-m_t}^{+ \infty} e^{-s} \operatorname{Vol}(\lbrace w_t \leq s \rbrace) ds  =  e^{-m_t} \int_{0}^{+ \infty} e^{-s} \operatorname{Vol}(\lbrace w_t \leq s \rbrace) ds  \\ & \geq e^{-m_t} \int_{1}^{+ \infty} e^{-s} \operatorname{Vol}(A_0) \, ds.
\end{align*}
Hence, by using the equation \eqref{dernierespoir1}, we get
$$
C_2 \geq e^{-m_t} \, c \, \int_{1}^{+ \infty} e^{-s} ds,
$$
and 
$$
\ln(C_2) \geq -m_t + \ln \left( c \, \int_{1}^{+ \infty} e^{-s} ds \right).
$$
\end{proof}
\begin{lemma}\label{xt<C}
Let $x^t=(x_1^t, \cdots, x_n^t)$ be the point where $w_t$. obtains its minimum. There exists a constant C independant of time $t$ such that
$$
\Vert x^t \Vert \leq C'.
$$
\end{lemma}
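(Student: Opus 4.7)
The plan is to combine Lemma \ref{intw=0} with the Futaki identity of Proposition \ref{Paul6} to obtain a vanishing integral involving $\nabla u^0$, and then deduce the boundedness of $x^t$ by a concentration/contradiction argument.

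For any $\zeta \in \aaa_1$, testing Lemma \ref{intw=0} and using $\nabla w_t = t\,\nabla u_t + (1-t)\,\nabla u^0$ gives
$$t \int_{\aaa_1}\langle \nabla u_t, \zeta\rangle\, e^{-w_t}\,dx + (1-t)\int_{\aaa_1}\langle \nabla u^0, \zeta\rangle\, e^{-w_t}\,dx = 0.$$
Substituting $e^{-w_t}\,dx$ from the Monge--Amp\`ere equation \eqref{MAERt1} and performing the change of variables $p' = \nabla u_t(x) \in 2\Delta$ in the first integral, it becomes, after the identification $p' = -2p - 4\rho_P$, a positive constant times the Futaki integral \eqref{Futakinul}, and hence vanishes. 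Therefore, for every $t \in [t_0, 1)$ and every $\zeta \in \aaa_1$,
$$\int_{\aaa_1} \langle \nabla u^0(x), \zeta\rangle\, e^{-w_t(x)}\, dx = 0. \qquad (*)$$

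Next, assume for contradiction that $\|x^{t_n}\| \to \infty$ along a sequence $t_n \in [t_0, 1)$; after extraction, $\eta := \lim x^{t_n}/\|x^{t_n}\|$ is a unit vector. Since $\nabla w_t(\aaa_1) = 2\operatorname{Int}(\Delta)$ contains a fixed ball $B_\delta(0)$ by \eqref{0inD}, an elementary convexity argument (picking for each unit $\eta'$ a point $y$ with $\nabla w_t(y) = \delta\eta'$ and applying the defining convex inequality twice) yields the uniform linear lower bound $w_t(x) \geq m_t + \delta\, \|x - x^t\|$ for all $x \in \aaa_1$ and $t \in [t_0, 1]$. Combined with the boundedness of $m_t$ from Lemmas \ref{mt<C} and \ref{mt>C}, this gives uniform exponential concentration of $e^{-w_t}\,dx$ near $x^t$. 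On the other hand, convexity of $u^0$ and the estimate $u^0 - v_{2\Delta} = O(1)$ (Proposition \ref{potconv}) imply $\langle\nabla u^0(y),\eta\rangle \to v_{2\Delta}(\eta)$ as $y \to \infty$ in direction $\eta$. Splitting the integral $(*)$ with $\zeta = \eta$ over $B_R(x^{t_n})$ and its complement, and then letting $R, n \to \infty$, one would obtain
$$\int_{\aaa_1} \langle\nabla u^0, \eta\rangle\, e^{-w_{t_n}}\,dx \longrightarrow v_{2\Delta}(\eta)\cdot C_2 > 0,$$
where $C_2 = \int e^{-w_t}\,dx$ is the $t$-independent constant from the proof of Lemma \ref{mt>C}, and $v_{2\Delta}(\eta) > 0$ again by \eqref{0inD}. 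This contradicts $(*)$; hence $\|x^t\|$ is bounded on $[t_0, 1)$, and the case $t = 1$ follows by continuity of $x^t$ as $t \to 1^-$.

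The delicate technical point is ensuring the uniform convergence $\langle \nabla u^0(x^{t_n} + z), \eta\rangle \to v_{2\Delta}(\eta)$ as $n \to \infty$ for $\|z\| \leq R$, needed to extract the limit inside the ball $B_R(x^{t_n})$. This follows from the monotonicity of $s \mapsto \langle\nabla u^0(y + s\eta), \eta\rangle$ (by convexity of $u^0$), continuity of $\nabla u^0$, and the fact that the points $x^{t_n} + z$ go to infinity in the direction $\eta$ uniformly in $\|z\| \leq R$.
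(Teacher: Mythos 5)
Your overall architecture is exactly the paper's: derive $\int_{\aaa_1}\langle\nabla u^0,\zeta\rangle e^{-w_t}\,dx=0$ from Lemma \ref{intw=0}, the Monge--Ampère equation \eqref{MAERt1} and the vanishing Futaki integral of Proposition \ref{Paul6}, then contradict it by showing that, when $\Vert x^t\Vert$ is large, the derivative of $u^0$ in the direction $x^t/\Vert x^t\Vert$ is bounded below by a positive constant (comparable to $v_{2\Delta}$ of that direction) on the region where $e^{-w_t}\,dx$ concentrates. So the proposal is essentially the paper's proof, up to your taking a subsequence and a limiting direction $\eta$ where the paper works directly with $\zeta=x^t/\Vert x^t\Vert$ and only needs the cruder lower bound $\tfrac12 a_0$.

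The one step that does not hold as stated is your concentration estimate. The inequality $w_t(x)\ge m_t+\delta\,\Vert x-x^t\Vert$ for \emph{all} $x\in\aaa_1$ does not follow from $\nabla w_t(\aaa_1)\supset B_\delta(0)$: picking $y$ with $\nabla w_t(y)=\delta\eta'$ and applying convexity twice gives $w_t(x)\ge m_t+\delta\langle\eta',x-x^t\rangle-\delta\langle\eta',y-x^t\rangle$, and the correction term $\delta\langle\eta',y-x^t\rangle$ is nonnegative and uncontrolled unless $\Vert y-x^t\Vert$ is first bounded; a convex function that is nearly flat on a long region around its minimum and only then grows with slope $\delta$ violates your claimed bound. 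The standard repair (which is what the paper gestures at, and what Wang--Zhu do) is to bound the sublevel set $A_0=\{w_t\le m_t+1\}$: since $\int_{\aaa_1}e^{-w_t}\,dx=C_2$ and $m_t\le C$ by Lemma \ref{mt<C}, one has $\operatorname{Vol}(A_0)\le C_2\,e^{m_t+1}$ uniformly, while $A_0\supset B(x^t,1/d_0)$ by the gradient bound $\Vert\nabla w_t\Vert\le d_0$; a convex set of bounded volume containing a ball of fixed radius has uniformly bounded diameter, so $A_0\subset B(x^t,R')$ with $R'$ independent of $t$, and convexity then yields $w_t(x)\ge m_t+\tfrac{1}{R'}\Vert x-x^t\Vert$ \emph{outside} $B(x^t,R')$, which is all that is needed for the tail estimate \eqref{Paul4}. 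With that repair your limit argument goes through. Two minor remarks: your identity $(*)$ degenerates at $t=1$ because of the factor $(1-t)$, so (as in the paper) the conclusion should be stated for $t<1$, which suffices for the closedness argument; and the appeal to ``continuity of $x^t$ as $t\to1^-$'' is unnecessary.
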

\begin{proof}
For a contradiction, assume that
$$
\forall C'>0,~~ \exists t \in [ t_0,1],~~ \text{$\psi_t$ is defined and } \Vert x^t \Vert >C'.
$$
For future use, it should be noted that this implies in particular that $\Vert x^t \Vert \neq 0$. Recall that thanks to the previous computation, we have
$$
\int_{\aaa_1} e^{-w_t}dx = C_2>0.
$$
Recal also that $\Vert \nabla w_t \Vert \leq d_0:= \sup \lbrace \Vert x \Vert ~:~ x \in 2 \Delta \rbrace$ if there is a radius $R'>0$ independent of $t$ such that $\inf \lbrace w_t(x) ~:~ x \in \partial B(x^t,R') \rbrace \geq m_t +1$. Now, by convexity, we have
$$
\forall x \in \aaa_1 \setminus B(x^t,R'),~~ w_t(x) \geq \cfrac{1}{R'} \Vert x - x^t \Vert + m_t.
$$
Hence for all $\varepsilon>0$, there is $\delta \geq R'$ independent of $t$ such that, by lemma \ref{mt>C},
\begin{equation}\label{Paul4}
\int_{\aaa_1 \setminus B(x^t,\delta)} e^{-w_t(x)}dx \leq e^{-\tilde{C}} \int_{\aaa_1 \setminus B(x^t,\delta) } e^{- \frac{1}{R'} \, \Vert x-x^t \Vert}dx \leq \varepsilon.
\end{equation}
We fix $ \varepsilon $ and $ \delta $ which satisfies the above property. We will use an argument from \cite{Delcroix2} (used in particular in this theorem 6.30) rather than the original argument of \cite{WZ}. Since $u^0$ is a convex function asymptote to function $v_{2 \Delta}$ and $ \nabla u^0 $ is a diffeomorphism of $ \aaa_1 $ in $ \operatorname{Int}(2 \Delta) $ and $ 0 \in \operatorname{Int}(2 \Delta) $( see proposition \ref{roro1}), by inequality of convexity, we obtain that
$$
\forall x \in B(x^t,\delta),~~\cfrac{\partial u^0 }{\partial \zeta}(x) \geq \cfrac{1}{2} \, a_0, 
$$
where $\zeta= x^t / \Vert x^t \Vert$ and $a_0= \inf \lbrace \vert v_{2 \Delta}(\xi) \vert ~:~ \xi \in \aaa, \Vert \xi \Vert=1 \rbrace > $0. We obtain, thanks to the equation \eqref{Paul4} and reducing $\epsilon$ in the last inequality if necessary,
\begin{align*}
\int_{B(x^t,\delta)} \cfrac{\partial u^0 }{\partial \zeta}(x) e^{-w_t} dx  &\geq \cfrac{1}{2} \, a_0 \int_{B(x_t,\delta)} e^{-w_t} \, dx  \geq \cfrac{1}{2} \, a_0 \left(  \int_{\aaa_1} e^{-w_t} \, dx - \int_{\aaa_1 \setminus B(x^t,\delta)} e^{w_t(x)} \, dx \right) \\ & \geq \cfrac{1}{2} \, a_0 \left(  \int_{\aaa_1} e^{-w_t(x)} \, dx - \varepsilon \right) \geq \cfrac{1}{4} \, a_0 C_2 .
\end{align*}
so for $\varepsilon$ small enough,
\begin{equation}\label{Paul7}
\int_{\aaa_1} \cfrac{\partial u^0 }{\partial \zeta}(x) e^{-w_t} dx >0.
\end{equation}
Now, we will prove that for all $ \zeta \in \aaa_1$, we have
\begin{equation}\label{Paul5}
\int_{\aaa_1} \cfrac{\partial u^0}{\partial \zeta} \, e^{-w_t}dx=0.
\end{equation}
Indeed, the proposition \ref{Paul6} and this proof give a constant $C>0$ such that
\begin{align*}
0 &= \int_{\aaa_1} ( \nabla u_t(x), \zeta ) e^{- ( \nabla u_t(x), \xi) } \prod_{\alpha \in \Phi^+_P} \langle \alpha, \nabla u_t(x) + 4 t_{\rho_P} \rangle\MA_\RR(u_t)(x) dx  \\& = C_0' \int_{\aaa_1} ( \nabla u_t(x), \zeta ) e^{-w_t(x)}dx 
 = C_0' \int_{\aaa_1} \cfrac{\partial u_t}{\partial \zeta}(x) e^{-w_t(x)}dx \\ & = C_0' \cfrac{1-t}{t} \int_{\aaa_1}\cfrac{\partial u^0}{\partial \zeta}(x) e^{-w_t(x)}dx -  C_0' \cfrac{1}{t} \int_{\aaa_1}\cfrac{\partial w_t}{\partial \zeta}(x) e^{-w_t(x)}dx \\
&= C_0' \cfrac{1-t}{t} \int_{\aaa_1}\cfrac{\partial u^0}{\partial \zeta}(x) e^{-w_t(x)}dx.
\end{align*}
So we get 
\begin{equation*}
\forall \zeta \in \aaa_1,~~\int_{\aaa_1}\cfrac{\partial u^0}{\partial \zeta}(x) e^{-w_t(x)}dx=0.
\end{equation*}
This prove equation \eqref{Paul5}, which contradicts the equation \eqref{Paul7}. This concludes the proof.
\end{proof}
The proof of the a priori estimate is by concluded with the following lemma.

\begin{lemma}\label{psit<C}
Let $\psi_t$ be a solution of equation \eqref{MAEt} where $t \in [t_0,1]$. We have
$$
\sup_M \psi_t \leq C'',
$$
for a constant $C''$ independent of $t$.
\end{lemma}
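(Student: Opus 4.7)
The plan is to reduce the sup estimate on $\psi_t$ to a comparison between $w_t$ and $u^0$ that exploits their common support-function asymptotics $v_{2\Delta}$, and then to feed in the uniform control on the minimum $m_t$ and on its location $x^t$ provided by lemmas \ref{mt<C}, \ref{mt>C} and \ref{xt<C}.

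By $K$-invariance of $\psi_t$ and density of $G/H$ in $M$, bounding $\sup_M \psi_t$ amounts to bounding $(w_t(x) - u^0(x))/t$ uniformly for $x \in \aaa_1$ and $t \in [t_0, 1]$, since $w_t = u^0 + t\,\psi_t$. First, using that $\nabla w_t(\aaa_1) \subset 2\Delta$, the directional derivative inequality $(\nabla w_t(z), \zeta) \leq v_{2\Delta}(\zeta)$ integrated along the segment from $x^t$ to $x$ gives the Lipschitz-type bound
$$w_t(x) \leq w_t(x^t) + v_{2\Delta}(x - x^t) = m_t + v_{2\Delta}(x - x^t).$$
Second, by proposition \ref{potconv} applied to the fixed reference potential $u^0$, there is a constant $C_0$, independent of $t$, with $u^0(x) \geq v_{2\Delta}(x) - C_0$ for every $x \in \aaa_1$.

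Subtracting and invoking the subadditivity of $v_{2\Delta}$ listed right after \eqref{fctsupport}, in the form $v_{2\Delta}(x - x^t) \leq v_{2\Delta}(x) + v_{2\Delta}(-x^t)$, the a priori unbounded term $v_{2\Delta}(x)$ cancels and one obtains
$$t\,\psi_t(x) = w_t(x) - u^0(x) \leq m_t + v_{2\Delta}(-x^t) + C_0 \leq m_t + d\,\Vert x^t \Vert + C_0,$$
with $d = \sup_{p \in 2\Delta}\Vert p \Vert$ as in the third bulletted property of $v_{2\Delta}$. Plugging in $m_t \leq C$ (lemma \ref{mt<C}) and $\Vert x^t \Vert \leq C'$ (lemma \ref{xt<C}) and dividing by $t \geq t_0 > 0$ yields $\psi_t(x) \leq (C + d C' + C_0)/t_0 =: C''$ uniformly in $x \in \aaa_1$ and $t \in [t_0,1]$, which is the claim.

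The key conceptual point is the cancellation of the a priori unbounded term $v_{2\Delta}(x)$ between the Lipschitz upper bound for $w_t$ and the asymptotic lower bound for $u^0$, both encoded by the same support function of the polytope $2\Delta$. I do not anticipate any substantive obstacle here: the delicate uniform control on $m_t$ and $x^t$ has already been supplied by the three preceding lemmas, and what remains is essentially polytope bookkeeping; note in particular that lemma \ref{mt>C} is not actually needed for the upper estimate, it only intervenes indirectly through the uniform control on $x^t$.
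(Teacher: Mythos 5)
Your proof is correct and follows essentially the same route as the paper: both arguments cancel the unbounded support-function term $v_{2\Delta}(x)$ between an upper bound on the evolving potential (coming from the gradient image being contained in $2\Delta$) and the lower bound $u^0 \geq v_{2\Delta} - C_0$ from Proposition \ref{potconv}, and then invoke Lemmas \ref{mt<C} and \ref{xt<C} together with $t \geq t_0 > 0$. The only difference is organizational: the paper anchors the convexity estimate at $0$ and reduces to bounding $u_t(0)$ via $w_t(0) \leq m_t + d_0 \Vert x^t \Vert$, whereas you anchor the Lipschitz estimate for $w_t$ at $x^t$ and conclude in one step via subadditivity of $v_{2\Delta}$.
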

\begin{proof}
By density and $K$-invariance, it is sufficient to prove that
$$
\sup_{ y \in \aaa_1} \psi_t (\exp(y)H) \leq C''.
$$
By convexity of $u_t$, we have
$$
\forall y \in \aaa_1,~~u_t(0) + ( \nabla u_t(y),y ) \geq u_t(y).
$$
By definition of $v_{2\Delta}$ and since $\nabla u_t(\aaa_1) =2 \operatorname{Int}(\Delta)$ by proposition \ref{roro1}, we have
$$
v_{2 \Delta}(y) + u_t(0) \geq u_t(y).
$$
Now, we have, thanks to \eqref{Olivier1} and because $v_{2 \Delta} - u_0$ is bounded,
\begin{align*}
\psi_t( \exp(y) H) &= u_t(y) - u_0(y)  
\leq v_{2\Delta}(y) + u_t(0) - u_0(y) 
\leq a + u_t(0).
\end{align*}
So it is sufficient to prove that $u_t(0)$ has upper bound independent of $t$.

As $\nabla w_t (\aaa_1) =  \operatorname{Int}( 2\Delta) $  which is a bounded polytope, we have
$
\vert \nabla w_t (\aaa_1) \vert  \leq d_0:= \sup \lbrace \Vert x \Vert ~:~ x \in 2 \, \Delta \rbrace,
$
and so
$
\vert w_t(0) - w_t(x^t) \vert \leq d_0 \Vert x^t \Vert.
$
Moreover, thanks to the lemma \ref{xt<C}, we get  $C'>0$ independant of $t$ such that $\Vert x^t \Vert \leq C'$ and so
$
\vert w_t(0) - w_t(x^t) \vert \leq d_0 C'.
$
By the lemma \ref{mt<C}, we get $w_t(x^t)=m_t \leq C$ where $C$ is a constant independant of $t$. So, we get
$
w_t(0) \leq C + d_0  C.
$
But $w_t=tu_t + (1-t)u^0$ so
$
 t u_t (0) \leq  C + d_0 C' -(1-t) u^0(0).
$
 Finally, as $ t \in [t_{0}, 1] $, we get
$
 u_t(0) \leq \Theta,
$
where $\Theta$ is a constant independant of $t$.
\end{proof}

\section{Kähler Einstein metric in the horospherical case}

In this section, we study under which necessary and sufficient conditions the Kähler-Ricci soliton $(X,g)$ is trivial i.e. $X=0$ and therefore when $g$ is a Kähler-Einstein metric. This part is based on \cite{Delcroix}.

\subsection{Condition of existence}

We have just proved that any Fano horospherical manifold admits a Kähler-Ricci soliton. This result can be completed by the following corollary.  
\begin{corollary}
Suppose that $M$ is a horospherical embedding of a homogeneous horospherical space $G/H$ such that $H$ contains the opposite Borel subgroup $B^-$ of $G$. Denote $\Delta^+$ the moment polytope of $X$ associated with the Borel subgroup $B$. Then $M$ admits a Kähler-Einstein metric if and only if
$$
\operatorname{Bar}(\Delta^+)_{DH} =-2 \rho_P,
$$
where $P:=N_G(H)$ and $\operatorname{Bar}_{DH}(\Delta^+)$ is the barycenter of the polytope $\Delta^+$ for the Duistermaat-Heckman measure.
\end{corollary}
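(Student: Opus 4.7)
The plan is to specialize the Futaki identity of Proposition \ref{Paul6} at $\xi=0$ and translate it into the vanishing of a barycenter, then conclude by the uniqueness of the solitonic vector field.

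By Theorem \ref{aaaa}, $M$ carries a Kähler-Ricci soliton $(X,g)$, and Proposition \ref{formX} gives $X=\xi-\sqrt{-1}J\xi$ for a uniquely determined $\xi\in\aaa_1$. Since a Kähler-Ricci soliton reduces to a Kähler-Einstein metric exactly when its vector field vanishes, the corollary amounts to showing
$$\xi=0 \iff \operatorname{Bar}_{DH}(\Delta^+)=-2\rho_P.$$
By Propositions \ref{Fnuletar} and \ref{Paul6}, the vector $\xi$ is characterized as the unique element of $\aaa_1$ satisfying
$$\int_{\Delta^+}\langle p+2\rho_P,\zeta\rangle\,e^{\langle -2p-4\rho_P,\xi\rangle}\prod_{\alpha\in\Phi^+_Q}(\alpha,p)\,dp=0 \quad\text{for every }\zeta\in\aaa_1.$$

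Setting $\xi=0$ the exponential becomes $1$, and the identity collapses to
$$\int_{\Delta^+}\langle p+2\rho_P,\zeta\rangle\prod_{\alpha\in\Phi^+_Q}(\alpha,p)\,dp=0\quad\text{for every }\zeta\in\aaa_1.$$
Since $\prod_{\alpha\in\Phi^+_Q}(\alpha,p)\,dp$ is the Duistermaat--Heckman density on $\Delta^+$ by definition, dividing by the total DH-volume turns this into $\langle \operatorname{Bar}_{DH}(\Delta^+)+2\rho_P,\zeta\rangle=0$ for all $\zeta\in\aaa_1$. Because the pairing between $\aaa_1$ and the affine directions of $\Delta^+$ is non-degenerate (by Proposition \ref{roro1}, $\Delta^+=2\rho_P+Q^*$ with $Q$ a full-dimensional polytope in $\aaa_1$), this is precisely $\operatorname{Bar}_{DH}(\Delta^+)=-2\rho_P$.

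The two directions of the corollary now follow from the uniqueness in Proposition \ref{Fnuletar}: if the barycenter condition holds, then $\xi=0$ solves the Futaki identity and is therefore the unique solitonic vector field, so the soliton produced by Theorem \ref{aaaa} is Kähler-Einstein; conversely, if $M$ admits a Kähler-Einstein metric, then uniqueness forces $\xi=0$, and evaluating the Futaki identity at $\xi=0$ yields the barycenter condition. The only check needed is the non-degeneracy of the pairing mentioned above, which amounts to bookkeeping on the dualities introduced in Section 2 and presents no real obstacle; all the substantive analytic work has already been absorbed into Theorem \ref{aaaa} and the Futaki computation of Proposition \ref{Paul6}.
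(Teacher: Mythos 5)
Your proposal is correct and follows essentially the same route as the paper: both reduce the statement, via the uniqueness of the solitonic vector field, to the vanishing of the Futaki invariant on $\aaa_1$, then specialize the integral identity of Proposition \ref{Paul6} at $\xi=0$ to read off $\operatorname{Bar}_{DH}(\Delta^+)=-2\rho_P$. The only cosmetic difference is that the paper phrases the reduction as ``$F_0$ vanishes identically'' (after restricting to $\zzz(\ggg)\simeq\ppp/\hhh$) while you phrase it as ``$\xi=0$ satisfies the characterizing identity''; these are the same argument.
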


\begin{proof}
We know that there is a soliton $(X,g)$. By the uniqueness of Kähler-Ricci soliton, it is therefore necessary and sufficient to prove that $X=0$ if and only if $\operatorname{Bar}(\Delta^+)_{DH} =-2 \rho_P$. In particular, thanks to the theoretical results concerning the Futaki invariant, it is therefore necessary to prove that $F_0$ is identically zero if and only if $\operatorname{Bar}(\Delta^+)_{DH} =-2 \rho_P$. Recall that $F_0$ is written for $ \chi \in \ggg^{1,0}$ (such that its real part is identified with the vector field defined by $x \mapsto \frac{d}{dt}_{t=0} \exp(t \text{Re}(\chi) )x$):
$$
F_0(\chi) =  \int_M \tilde{\theta}_\chi \, \omega_g^n. 
$$
where \begin{equation}
\LLL_\chi \omega_g = \sqrt{-1} \, \partial \overline {\partial} \, \tilde{\theta}_\chi,
~~
\int_M \tilde{\theta}_\chi e^{h_g} \omega^n_g = 0,
\end{equation}
and where the function $h_g \in \C^\infty(M,\RR)$ (defined modulo a additive constant) satisfies
\begin{equation*}
\Ric(\omega_g) - \omega_g = \sqrt{-1} \partial
\overline{\partial} h_g.
\end{equation*}
Moreover, we know that we restrict our study on $\ppp/\hhh \simeq \zzz(\ggg)$. We then obtain, thanks to the equation \eqref{Futakinul}, for every $\zeta \in \aaa_1$ :
\begin{align*}
F_0(\zeta)&= \tilde{M} \int_{\Delta^+} \langle p+2 \rho_P, \zeta \rangle \prod_{\alpha \in \Phi^+_Q} ( \alpha, p ) \, dp,\\
\end{align*}
where $\tilde{M}$ is an constant independent of $\zeta$. We have that $g$ is therefore a Kähler-Einstein metric if and only if
$$
 \forall \zeta \in \aaa_1,~~\int_{\Delta^+} \langle p+2 \rho_P, \zeta \rangle \prod_{\alpha \in \Phi^+_Q} ( \alpha, p ) \, dp =0.
$$
This equation can be written in the form
$$
\langle \operatorname{Bar}_{DH}(\Delta^+)+ 2 \rho_P, \zeta \rangle = 0.
$$
So $g$ is a Kähler-Einstein metric if and only if
$$
\operatorname{Bar}_{DH}(\Delta^+)=-2 \rho_P.
$$
\end{proof}
\subsection{Monge-Ampère Equation and greatest Ricci lower bound}
We keep the notations introduced in the part \ref{setuphoro}. To prove the existence of Kähler-Einstein metrics via the continuity method, it is sufficient to take $X=0$ in the equation \eqref{MAEt} and $\xi=0$ in the equation \eqref{MAERt1}. We get the following complex Monge-Ampère equation : \begin{equation}\label{MAEKEt}
\left \{
\begin{array}{rll}
 \det(g^{0}_{i \overline{j}} + \psi_{i \overline{j}} ) &= \det (g^{0}_{i \overline{j}}) \exp( h -t \psi) \\
     (g^{0}_{i \overline{j}} + \psi_{i \overline{j}} )&>0 \\
     \end{array}  
  \right.
 \end{equation}
ans so the real Monge-Ampère equation :
\begin{equation}\label{MAERt}
\MA_\RR(u_t)(x) \cdot  \prod_{\alpha \in \Phi^+_P} \langle \alpha , \nabla u_t(x) + 4 t_{\rho_P} \rangle  \, = \exp \left[ -w_t(x) \right],
\end{equation}
where $w_t = t \, u_t + (1-t) \, u^0$ (recall that $u^0$ is the potential defined by the equation \eqref{potudef} for the reference metric $g^0$). Since this definition is unchanged, the lemmas demonstrations \ref{intw=0}, \ref{wtext}, \ref{mt<C} and \ref{psit<C} may apply to this case. In particular, we therefore obtain the following proposition.
\begin{proposition}
There is a solution to time $t_0>0$ to the equation \eqref{MAEKEt} if there is a constant $C>0$ independent of time $t$ such that
$$
\sup_{t \in[0,t_0[} ~~\vert x^t \vert < C,
$$
where $x^t$ is the point where the minimum of the $w_t$ function is achieved.
\end{proposition}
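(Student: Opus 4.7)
The plan is to transpose the a priori $\C^0$ estimate argument of the previous section (Lemmas \ref{intw=0}--\ref{psit<C}) to the Kähler--Einstein equation \eqref{MAERt}, substituting the hypothesis $\sup_{t<t_0}\|x^t\|<C$ for the conclusion of Lemma \ref{xt<C}, and then to close up the set $S$ of admissible times by standard higher-order estimates and compactness.

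More concretely, fix a sequence $t_n \nearrow t_0$ with $t_n \in [t_0/2, t_0)$, and let $\psi_{t_n}$, $u_{t_n}$, $w_{t_n} = t_n u_{t_n} + (1-t_n) u^0$ and $x^{t_n}$ be the associated objects. First I would check that Lemmas \ref{intw=0}, \ref{wtext}, \ref{mt<C}, \ref{mt>C} transpose verbatim to \eqref{MAERt} (as already noted, the factor $e^{(\nabla u_t,\xi)}$ of the soliton case is simply absent when $\xi=0$, so the same convexity and integration-by-parts arguments go through); in particular I obtain a uniform upper bound $m_{t_n} \leq C_1$. Since $\nabla w_{t_n}(\aaa_1) = 2\,\operatorname{Int}(\Delta)$ is uniformly bounded, the function $w_{t_n}$ is $d_0$-Lipschitz with $d_0 = \sup\{\|x\| : x \in 2\Delta\}$, so
\[
w_{t_n}(0) \leq m_{t_n} + d_0 \|x^{t_n}\| \leq C_1 + d_0 C.
\]
Combined with the identity $t_n u_{t_n}(0) = w_{t_n}(0) - (1-t_n) u^0(0)$ and the lower bound $t_n \geq t_0/2 > 0$, this yields a uniform upper bound on $u_{t_n}(0)$. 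Exactly as in the proof of Lemma \ref{psit<C}, the convexity estimate $u_{t_n}(y) \leq u_{t_n}(0) + v_{2\Delta}(y)$ together with the fact that $u^0 - v_{2\Delta}$ is bounded (Proposition \ref{potconv}) then gives $\sup_M \psi_{t_n} \leq C_2$ independently of $n$.

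The Harnack-type inequality $-\inf_M \psi_{t_n} \leq C_3(1 + \sup_M \psi_{t_n})$ recalled at the end of Section \ref{setuphoro} upgrades this to a uniform $\C^0$ bound on $\psi_{t_n}$, and the classical work of Yau and Calabi (Appendix A of \cite{Y}) then yields uniform $\C^k$ bounds for every $k$. Arzelà--Ascoli extracts a subsequence converging smoothly to some $\psi_{t_0}$, which solves \eqref{MAEKEt} at $t = t_0$ by passing to the limit. The step I expect to be the real obstacle is precisely the one the statement sidesteps: in the Kähler--Einstein setting, Lemma \ref{xt<C} relied on the vanishing identity $\int_{\aaa_1}(\partial u^0/\partial \zeta)\,e^{-w_t}\,dx = 0$ coming from the Futaki invariant, which here would force $\operatorname{Bar}_{DH}(\Delta^+) = -2\rho_P$ and is thus unavailable in general. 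This is why the bound on $\|x^t\|$ must be imposed as a hypothesis, and it is precisely the range of $t$ for which such a bound can still be established that will determine the greatest Ricci lower bound $R(M)$ in the sequel.
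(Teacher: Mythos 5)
Your proposal is correct and follows essentially the same route the paper intends: the paper gives no written proof of this proposition, merely observing that Lemmas \ref{intw=0}, \ref{wtext}, \ref{mt<C} and \ref{psit<C} carry over verbatim to the $\xi=0$ case, with the hypothesis on $\Vert x^t\Vert$ standing in for Lemma \ref{xt<C}, followed by the Harnack inequality, the Yau--Calabi higher-order estimates and Arzel\`a--Ascoli as recalled in the continuity-method subsection. Your closing remark correctly pinpoints why the bound on $\Vert x^t\Vert$ must be assumed rather than proved: the vanishing identity underlying Lemma \ref{xt<C} is exactly the Futaki obstruction, which fails when $\operatorname{Bar}_{DH}(\Delta^+)\neq -2\rho_P$.
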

With this result, we obtain the following caracterisation of the greatest Ricci lower bound
\begin{corollary}
The greatest Ricci lower bound is equal to
$$
R(M) = \sup \left\lbrace t_0 \in[0.1] ~:~ \exists C>0,~~ \forall t < t_0,~~ \vert x^t \vert < C \right\rbrace.
$$
\end{corollary}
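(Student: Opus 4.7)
The plan is to combine the preceding proposition with the characterization of $R(M)$ as the supremum of times $t \in [0,1]$ for which the Aubin continuity equation \eqref{MAEKEt} admits a solution (Székelyhidi \cite{sze}). One direction of that characterization is immediate from \eqref{MAEKEt}: any solution $\psi_t$ yields a Kähler form $\omega_t = \omega_{g^0} + \sqrt{-1}\,\partial\overline{\partial}\psi_t \in c_1(M)$ satisfying $\operatorname{Ric}(\omega_t) = t\,\omega_t + (1-t)\,\omega_{g^0} \geq t\,\omega_t$, so solvability at $t$ forces $R(M) \geq t$; the converse is the content of \cite{sze}. Denote by $T$ the supremum appearing on the right-hand side of the claimed equality.

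For the inequality $R(M) \geq T$, take any $t_0 < T$. By definition of $T$ there exists $t_1 \in (t_0, T]$ in the defining set and a constant $C>0$ with $|x^t| < C$ for all $t < t_1$; in particular the same uniform bound holds on $[0,t_0)$, so the preceding proposition produces a solution of \eqref{MAEKEt} at time $t_0$. Hence $R(M) \geq t_0$, and letting $t_0 \to T$ gives $R(M) \geq T$.

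For the reverse inequality $R(M) \leq T$, fix any $t_0 < R(M)$. The characterization from \cite{sze} provides solutions $\psi_s$ of \eqref{MAEKEt} for every $s$ in a compact interval $[0,t_0']$ with $t_0 < t_0' < R(M)$. The associated potentials $u_s$ and the functions $w_s = s\,u_s + (1-s)\,u^0$ are smooth and strictly convex by proposition \ref{potconv}, so each $w_s$ has a \emph{unique} minimizer $x^s \in \aaa_1$. Standard smooth dependence of the Aubin solution $\psi_s$ on the parameter $s$ (implicit function theorem applied to the Monge--Ampère operator on Hölder spaces) transfers to smooth dependence of $u_s$ and $w_s$. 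Applying the implicit function theorem to $\nabla w_s(x^s) = 0$, with $\nabla^2 w_s > 0$ ensuring invertibility, shows that $s \mapsto x^s$ is continuous on $[0,t_0']$. Compactness yields a uniform bound $|x^s| < C$ on this interval, so $t_0' \in T$; since $t_0 < R(M)$ was arbitrary, $R(M) \leq T$.

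The main obstacle is the continuity of the map $s \mapsto x^s$ in the second direction. Uniqueness of the minimizer follows cleanly from strict convexity of $w_s$, but the continuous dependence on $s$ rests on the standard but non-trivial smoothness of the Aubin continuity path for the complex Monge--Ampère equation; this is the sole analytical ingredient beyond what the paper has already established in the preceding propositions and lemmas.
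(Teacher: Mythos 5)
Your proposal is correct and follows the route the paper intends: it combines the preceding proposition (a uniform bound on the minimizers $x^t$ implies solvability at the endpoint) with the standard characterization of $R(M)$ as the supremum of times at which the continuity path \eqref{MAEKEt} is solvable. The paper states this corollary without an explicit proof, and your argument supplies the one nontrivial detail it leaves implicit — the continuity of $t \mapsto x^t$ (via smooth dependence of the solution on $t$ and strict convexity of $w_t$), which yields the inequality $R(M) \leq T$ — in a correct way.
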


\subsection{Computation of the greatest Ricci lower bound}
We will compute $R(M)$ in the horospherical case based on the work of \cite{Delcroix}. Before we begin, let's note that, thanks to the equation \eqref{MAERt},
\begin{equation}\label{Francine1}
\int_{\aaa_1} e^{-w_t(x)}dx = V,
\end{equation}
where $V$ is the volume of $\Delta^+$ for Duistermaat-Heckman measure.

\begin{theorem}
Suppose that $M$ is a horospherical embedding of a homogeneous horospherical space $G/H$ such that $H$ contains the opposite Borel subgroup $B^-$ of $G$. Note $P=N_G(H)$ and $\Delta^+$ the moment polytope of $X$ with respect to the Borel subgroup $B$.  In addition, it is assumed that $2 \rho_P \neq \operatorname{Bar}_{DH}(\Delta^+)$ where $\operatorname{Bar}_{DH}(\Delta^+)$ is the barycentre of the polytope $\Delta^+$ for the Duistermaat-Heckman measurement. We then have that $R(M)$ is the only $t \in \in \, ]0.1[$ such that
$$
\dfrac{t}{t-1}(\operatorname{Bar}_{DH}(\Delta^+) + 2 \rho_P) \in \partial \left( \Delta^+  + 2\rho_P \right).
$$
\end{theorem}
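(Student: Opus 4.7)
The plan is to reduce the $C^0$ estimate criterion for $R(M)$ from the preceding corollary to a purely geometric statement about a ray exiting the polytope $\Delta^+ + 2\rho_P$. Set $b := \operatorname{Bar}_{DH}(\Delta^+) + 2\rho_P$, which is nonzero by hypothesis. I would first establish the time-independent identity
$$
\frac{1}{V} \int_{\aaa_1} \nabla u_t(x) \, e^{-w_t(x)} \, dx = -2\, b,
$$
by testing the real Monge-Ampère equation \eqref{MAERt} against each coordinate linear form on $\aaa_1^*$ and applying the two successive changes of variable $p' = \nabla u_t(x)$, then $p' = -2 p - 4 \rho_P$, exactly as at the end of the proof of Proposition \ref{Paul6}. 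Combined with Lemma \ref{intw=0} applied coordinatewise, which reads $t \int \nabla u_t \, e^{-w_t} + (1-t) \int \nabla u^0 \, e^{-w_t} = 0$, this yields the time-dependent identity
$$
\frac{1}{V} \int_{\aaa_1} \nabla u^0(x) \, e^{-w_t(x)} \, dx = \frac{2\, t\, b}{1 - t}.
$$
Using $\Delta = -2 \rho_P - \Delta^+$ from \eqref{polytopeDelta}, the geometric condition $\frac{t}{t-1}\, b \in \overline{\Delta^+ + 2\rho_P}$ is equivalent to $\frac{2\, t\, b}{1-t} \in 2\, \overline{\Delta}$.

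Next, I would show that unboundedness of $|x^t|$ forces the ratio $\frac{t}{t-1}\, b$ onto the boundary. Fix a sequence $t_k \to t_0 \in (0,1)$ with $|x^{t_k}| \to \infty$, and extract a subsequence so that $\zeta_k := x^{t_k} / |x^{t_k}|$ converges to a unit vector $\zeta_\infty$. Since $\nabla w_t(\aaa_1) \subset 2 \Delta$, the functions $w_t$ are uniformly Lipschitz with constant $d_0 := \sup\{\Vert p\Vert ~:~ p \in 2\Delta\}$, and by the same convex-geometric template used in the proofs of Lemmas \ref{xt<C} and \ref{psit<C}, for every $\varepsilon > 0$ there exists $\delta > 0$ (independent of $k$) with $\int_{\aaa_1 \setminus B(x^{t_k}, \delta)} e^{-w_{t_k}} dx \le \varepsilon$. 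Thus the probability measure $V^{-1} e^{-w_{t_k}} dx$ concentrates in an $O(1)$-neighbourhood of $x^{t_k}$, and for any point $x$ in such a ball one has $x / |x| \to \zeta_\infty$. Since $u^0 - v_{2 \Delta}$ is bounded and $u^0$ is strictly convex (Proposition \ref{potconv}), a convex-analytic computation gives $(\nabla u^0(x), \zeta_\infty) \to v_{2\Delta}(\zeta_\infty)$, so $\nabla u^0(x)$ accumulates on the exposed face $F_{\zeta_\infty} := \{q \in 2\, \overline{\Delta} ~:~ (q, \zeta_\infty) = v_{2\Delta}(\zeta_\infty)\} \subset \partial(2 \Delta)$. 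Passing to the limit in the identity above yields $\frac{2\, t_0\, b}{1 - t_0} \in F_{\zeta_\infty}$, hence $\frac{t_0}{t_0 - 1}\, b \in \partial(\Delta^+ + 2 \rho_P)$.

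Contrapositively, if $\frac{t}{t - 1}\, b$ stays in $\operatorname{Int}(\Delta^+ + 2\rho_P)$ on $[0, t_0]$, then $|x^t|$ is uniformly bounded there, so by Lemmas \ref{mt<C}, \ref{mt>C}, \ref{xt<C}, \ref{psit<C} and the continuity method, a smooth solution $\psi_t$ exists on $[0, t_0]$. This gives $R(M) \ge t^*$, where $t^*$ is the first $t \in (0,1)$ at which $\frac{t}{t-1}\, b$ reaches $\partial(\Delta^+ + 2\rho_P)$. The reverse inequality $R(M) \le t^*$ follows from the identity in the first paragraph alone: for any $t \in S$, the point $\frac{2\, t\, b}{1-t}$, being an average of $\nabla u^0(x) \in 2 \operatorname{Int}(\Delta)$ against a probability measure, must lie in $2\, \overline{\Delta}$, which fails for $t > t^*$. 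For uniqueness of $t^*$, the derivative $\frac{d}{dt}\frac{t}{t-1} = -(t-1)^{-2} < 0$ shows that $t \mapsto \frac{t}{t-1}\, b$ parametrizes an open ray from $0$ in direction $-b$ as $t$ runs over $(0,1)$; since $\Delta^+ + 2\rho_P$ is a bounded convex polytope with $0$ in its interior (by Proposition \ref{roro1}, equivalently $-2\rho_P \in \operatorname{Int}(\Delta^+)$), this ray crosses the boundary at exactly one point.

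The main obstacle is the concentration-plus-asymptotic argument in the second paragraph, where both the probability measure $V^{-1} e^{-w_{t_k}} dx$ and the integrand $\nabla u^0$ drift to infinity along the sequence. The exponential decay of the measure away from $x^{t_k}$ is controlled by the same sublevel-set volume estimates as in Lemmas \ref{xt<C} and \ref{psit<C}, and Proposition \ref{potconv} links $u^0$ to the support function $v_{2 \Delta}$; the delicate point is to combine these uniformly in $t_k$ into the clean statement $\frac{2\, t_0\, b}{1-t_0} \in F_{\zeta_\infty}$, which is the key ingredient forcing the geometric condition on the boundary.
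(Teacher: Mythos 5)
Your argument follows the paper's proof in all essentials: the same change-of-variables identity expressing the $e^{-w_t}\,dx$-average of $\nabla u_t$ as $-2\left(\operatorname{Bar}_{DH}(\Delta^+)+2\rho_P\right)$, the same use of Lemma \ref{intw=0} to transfer this to $\nabla u^0$, the same concentration of the measure $e^{-w_t}\,dx$ near $x^t$ combined with Proposition \ref{potconv} to identify the limiting value with the support function $v_{2\Delta}(\xi_{t_\infty})$, and the same ray-meets-polytope argument for uniqueness. The only departure is organizational: you sandwich $R(M)$ between two inequalities, getting $R(M)\le t^*$ by the soft observation that $\tfrac{2tb}{1-t}$ is an average of points of $2\operatorname{Int}(\Delta)$ and hence lies in $2\overline{\Delta}$, whereas the paper instead proves $\Vert x^t\Vert\to\infty$ as $t\to t_\infty=R(M)$ and verifies the boundary relation directly at that time; both versions are correct and rest on the same key computations.
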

\begin{proof}
Recall that we have
$$
\int_{\aaa_1} e^{-w_t}dx = V.
$$
Recall also that $\Vert \nabla w_t \Vert \leq d:= \sup \lbrace \Vert x \Vert ~:~ x \in 2 \Delta^+ \rbrace$ so there is a radius $R'>0$ independent of $t$ such that $\inf \lbrace w_t(x) ~:~ x \in \partial B(x^t,R) \rbrace \geq m_t +1$. Now, by convexity of $w_t$, we have
$$
 \forall x \in \aaa_1 \setminus B(x_t,R'),~~ w_t(x) \geq \cfrac{1}{R} \Vert x - x^t \Vert + m_t.
$$
So, for all $\varepsilon>0$, there is $\delta:=\delta_\varepsilon \geq R'$ independent of $t$ such that
\begin{equation}\label{aaacutb}
\int_{\aaa_1 \setminus B(x^t,\delta)} e^{-w_t(x)}dx \leq e^{-\tilde{C}}  \int_{\aaa_1 \setminus B(x^t,\delta) } e^{- \frac{1}{R'} \Vert x-x^t \Vert}dx \leq \varepsilon.
\end{equation}
Denote $t_\infty= R(M)$ which is stricly lower of 1 since $M$ does not admits Kähler-Einstein metrics.
Let's start by noting that 
$$ \lim_{t \rightarrow t_\infty} \Vert x^t \Vert = + \infty.
$$
Indeed, if $ \Vert x^t \Vert$ admits an adherence value then the equation \eqref{MAEt} admits a solution in $t=t_\infty$ and since all the set of solutions is open, there is a $\delta>0$ such that there is a solution in $t=t_\infty + \delta$. This contradicts the maximum of $R(M)=t_\infty$. In addition, by posing for any $t \in[0, t_\infty]$, $\xi_t := \frac{x^t}{\vert x^t \vert }$, we can find a sequence of $(t_i)_{i \in \NN}$ of $[0,t_\infty]$ such that $[0,t_i \rightarrow t_\infty$ and $\xi_{t_\infty} \in \aaa_1 $ verifying
$$
\lim_{i \rightarrow \infty} \xi_{t_i} = \xi_{t_\infty}. 
$$
Let's start by noting that for all  $t \in [0,t_\infty]$, we have
\begin{align*}
\int_{\aaa_1} (\nabla u_t, \xi_t)e^{-w_t}dx &= \int_{\aaa_1} (\nabla u_t, \xi_t) \MA_\RR(u_t) \cdot \prod_{\alpha \in \Phi_P^+} \langle \alpha, \nabla u_t + 4 \rho_P \rangle dx  \\ &= \int_{2\Delta} \langle p, \xi_t \rangle \cdot \prod_{\alpha \in \Phi_P^+} \left( \alpha, p + 4 \rho_P \right)  dp  \\ & = \int_{\Delta^+} - \langle p + 2\rho_P, \xi_t \rangle \cdot \prod_{\alpha \in \Phi_Q^+} \left( \alpha, p  \right)  dp \\ & = - \langle \operatorname{Bar}_{DH}( \Delta^+) + 2\rho_P, \xi_t \rangle V.
\end{align*}
We get, by passing to the limit, 
\begin{equation}\label{nablaut}
\lim_{i \rightarrow + \infty} \int_{\aaa_1} (\nabla u_{t_i}, \xi_{t_i})e^{-w_{t_i}}dx = - \langle \operatorname{Bar}_{DH}(\Delta^+) +2 \rho_P, \xi_{t_\infty} \rangle V.
\end{equation}
Recall that we defined the support function $v_{2 \Delta}$ by the equation \eqref{fctsupport}. With this definition, we want to show that 
\begin{equation}\label{limu0}
\lim_{i \rightarrow + \infty} \int_{\aaa_1} (\nabla u^0 , \xi_{t_i}) e^{-w_{t_i}} = v_{2 \Delta}(\xi_{t_\infty})V. 
\end{equation}
We fix $\varepsilon>0$. We define $\theta:= \varepsilon/6d$ where $d:=\sup_{x \in 2 \,\Delta} \Vert x \Vert$ and we have, by the formula \eqref{aaacutb} with $\delta=\delta_\theta$, that
$$
\int_{\aaa_1 \backslash B(x^t,\delta)} e^{-w_t} dx \leq \theta.
$$
So we get
\begin{align*}
\Big\vert \int_{\aaa_1 \backslash B(x^t, \delta)} (\nabla u^0, \xi_t) \, e^{w_t} dx \Big\vert & \leq \int_{\aaa_1 \backslash B(x^t, \delta)} \vert (\nabla u^0, \xi_t) \vert \,  e^{-w_t} dx   \leq d \int_{\aaa_1 \backslash B(x^t, \delta)} e^{-w_t} dx  \leq d \theta.
\end{align*}
That implies, by definition of $\theta$ :
\begin{equation}\label{morc1}
\Big\vert \int_{\aaa_1 \backslash B(x^t, \delta)} (\nabla u^0, \xi_t) \, e^{-w_t} dx \, \Big\vert \leq \varepsilon/6.
\end{equation}
Moreover, since $\nabla u^{0}(\aaa_1) = \operatorname{Int}(2 \Delta)$(proposition \ref{potconv}), we have
$$
\forall x \in B(x^t,\delta),~~(\nabla u^0, \xi_t ) \leq v_{2 \Delta}.(\xi_t).
$$
Since $u^0$ is a convex function, we have
$$
(\nabla u^0, \xi_t) \geq \dfrac{u^0(x) - u^0(x-x^t)}{\Vert x_t \Vert}.
$$
For all $x \in B(x^t,\delta)$, we have $x-x^t \in B(0, \delta)$ so there is a constant $C_0 \in \RR$ independent of $t$ such that 
$$
\forall x \in B(x^t,\delta),~~u^0(x-x^t) \geq C_0.
$$
Moreover, we have (proposition \eqref{potconv}) the existence of a constant $C_1>0$ time independent such that
$$
\forall x \in \aaa_1,~~-C_1 < u^0(x) - v_{2 \Delta}(x) < C_1.
$$
So by denoting $C=-C_0-C_1$, we get
$$
\forall x \in B(x^t,\delta),~~(\nabla u^0(x), \xi_t) \geq \dfrac{C+v_{2 \Delta}}{\Vert x^t \Vert}.
$$
Now we get, for all $x \in B(x^t,\delta)$,
\begin{align*}
(\nabla u^0(x), \xi_t) &\geq v_{2 \Delta}(\xi_t) 
- v_{2 \Delta} \left( -\cfrac{x-x^t}{\Vert x^t \Vert} \right) + \frac{C}{\Vert x^t \Vert} \\
& \geq v_{2 \Delta}(\xi_t) - \cfrac{\Vert x - x^t \Vert} {\Vert x^t \Vert } \, v_{2 \Delta} \left( \frac{x^t-x}{ \Vert  x^t-x \Vert } \right) + \frac{C}{ \Vert x^t \Vert} \\
& \geq v_{2 \Delta} (\xi_t) + \frac{
C}{\Vert x^t \Vert} - \frac{\delta}{ \Vert x^t \Vert } \,  \inf_{ \Vert y \Vert =1} v_{2 \Delta} (y).
\end{align*}
There is a constant $C'>0$ indépendent of $t$ such that
$$
\dfrac{C'}{\Vert x^t \Vert} \leq (\nabla u^0(x), \xi_t) - v_{2 \Delta}(\xi_t) \leq 0.
$$
In particular, since $\Vert x^t \Vert$ tends to infinity when $t \rightarrow t_\infty$, we get
$$
\exists i_0 \in \NN,~~ \forall i \geq i_0, ~~ \vert (\nabla u^0(x), \xi_t) - v_{2 \Delta}(\xi_{t_i}) \vert \leq \varepsilon/3 V,
$$
and so
\begin{equation}\label{morc2}
\big\vert \int_{B(x^t,\delta)} \left(\nabla u^0(x) - v_{2 \Delta}(\xi_t) \right)e^{-w_t(x)} dx \big\vert \leq \varepsilon/3.
\end{equation}
by using the equation \eqref{aaacutb} with $\delta=\delta_\theta$, we get

\begin{align*}
\Big\vert v_{2 \Delta} (\xi_t)V - \int_{B(x^t,\delta)} v_{2 \Delta}(\xi_t) \, e^{-w_t(x)} dx \, \Big\vert & = \Big\vert \int_{\aaa_1} v_{2 \Delta}(\xi_t) \, e^{-w_t(x)} dx - \int_{B(x^t,\delta)} v_{2 \Delta}(\xi_t) \, e^{-w_t(x)} dx  \Big\vert \\
&= \Big\vert \int_{\aaa_1 \backslash B(x^t,\delta)} v_{2\Delta}(\xi_t) \, e^{-w_t(x)} dx \Big\vert \leq d \theta.
\end{align*}
So we get
\begin{equation}\label{morc3}
\big\vert v_{2 \Delta}(\xi_t)V - \int_{B(x^t,\delta)} v_{2 \Delta}(\xi_t) \, e^{-w_t(x)} dx 
\big\vert \leq \varepsilon/6. 
\end{equation}
Moreover, by continuity of $v_{2 \Delta}$,
\begin{equation}\label{morc4}
\exists i_1 \in \NN,~~  \forall i >i_1,~~ \vert v_{2 \Delta}(\xi_{t_i})V - v_{2 \Delta}(\xi_{t_0})V \vert \leq \varepsilon/3.
\end{equation} 
We can conclude, thanks to \eqref{morc1}, \eqref{morc2}, \eqref{morc3}, \eqref{morc4}, that:
\begin{align*}
\forall i \geq \max(i_0,i_1),~~ \Big\vert \int_\aaa ( \nabla u^0(x), \xi_{t_i} ) e^{-w_{t_i}(x)}dx -v_{2 \Delta}(\xi_\infty)V \Big\vert & \leq \varepsilon.
\end{align*}
So we get the formula \eqref{limu0}.

Now, thanks to the equation \ref{intw=0}, we get
$$
0= \int_{\aaa_1}( \nabla w_t(x), \xi_t) \, e^{-w_t(x)} dx = t \, \int_{\aaa_1} (\nabla u_t, \xi_t ) \, e^{-w_t} dx + (1-t) \int_{\aaa_1} (\nabla u^0 , \xi_t) \, e^{-w_t(x)}dx.
$$
So, by passing to the limit, we get
$$
- t_\infty \, \langle \operatorname{Bar}_{DH}(\Delta^+) + 2 \rho_P, \xi_{t_\infty} \rangle + (1-t_\infty) \, v_{2 \Delta}(\xi_{t_{\infty}})  =0,
$$
and so
\begin{equation}\label{Marguerite}
- \frac{t_\infty}{t_\infty-1} \langle \operatorname{Bar}_{DH}(2 \Delta^+) +2 \rho_P , \xi_{t_\infty} \rangle = v_{2 \Delta}(\xi_\infty).
\end{equation}
To conclude, we study the function
$$
f : t \in [0,1[ \, \mapsto \, - \frac{t}{t-1} \left( \operatorname{Bar}(2 \Delta^+) \right)+2 \rho_P  \in \aaa_1^*.
$$
Note that $f(0)=0 \in \aaa_1^*$. Now we know that $0 \in 2 \operatorname{Int}(\Delta)$ so $f(0) \in \operatorname{Int}(\Delta)$. Thus, since the function $t/(t-1)$ is a strictly decreasing function on $[0,1[$ with values in $]-\infty,0]$, the values of $f$ run the half line of origin $0$ and direction $-(\operatorname{Bar}_{DH}(2 \Delta^+) + 2\rho_P)$. Thanks to the equation \eqref{Marguerite}, we obtain that
$
f(t_\infty)
$
is a vector such that $\langle f(t_\infty) , \xi_{t_\infty} \rangle = v_{2 \Delta}(\xi_{t_\infty})$, this means that $f(t_\infty)$ belongs to the support hyperplane defined by $\xi_{t_\infty}$. So $t_\infty$ is the only time $t \in[0,1[$ such that
$$
-\cfrac{t}{t-1} \, \left( \operatorname{Bar}_{DH}(2 \Delta^+) +2 \rho_P \right) \in \partial  \left( 2\Delta \right).
$$
By using the equation \eqref{polytopeDelta}, we can rewritte the last equation in the form
$$
\cfrac{t}{t-1} \, \left( \operatorname{Bar}_{DH}(\Delta^+) + 2 \rho_P  \right) \in \partial  \left( \Delta^+ + 2\rho_P \right).
$$
\end{proof}
\bibliographystyle{alpha}
\bibliography{biblio}

\begin{thebibliography}{KMoOG84}

\bibitem[Aub78]{Aub}
T.~Aubin.
\newblock \'{E}quations du type {M}onge-{A}mp\`ere sur les vari\'et\'es
  k\"ahl\'eriennes compactes.
\newblock {\em Bull. Sci. Math.}, 102:63 -- 95, 1978.

\bibitem[Bri87]{Bri87}
M.~Brion.
\newblock {\em Sur l'image de l'application moment}, pages 177--192.
\newblock Springer, 1987.

\bibitem[Bri89]{Bri89}
M.~Brion.
\newblock Groupe de {P}icard et nombres caract\'eristiques des vari\'et\'es
  sph\'riques.
\newblock {\em Duke Math. J.}, pages 397--424, 04 1989.

\bibitem[BT95]{bott}
R.~Bott and L.W. Tu.
\newblock {\em Differential Forms in Algebraic Topology}.
\newblock Graduate Texts in Math. Springer, 1995.

\bibitem[Del15]{Delcroix}
T.~Delcroix.
\newblock {\em {K{\"a}hler-{E}instein metrics on group compactifications}}.
\newblock PhD thesis, {Universit{\'e} Grenoble Alpes}, 2015.

\bibitem[{Del}16]{Delcroix2}
T.~{Delcroix}.
\newblock {K-Stability of Fano spherical varieties}.
\newblock {\em ArXiv e-prints}, (arXiv:1608.01852), August 2016.

\bibitem[Dem]{Demailly1}
J.~P. Demailly.
\newblock Complex analytic and differential geometry.
\newblock Notes de cours.

\bibitem[{Don}08]{Donaldson}
S.~K. {Donaldson}.
\newblock {K\"ahler geometry on toric manifolds, and some other manifolds with
  large symmetry.}
\newblock In {\em {Handbook of geometric analysis. No. 1}}, pages 29--75.
  Inter. Press, 2008.

\bibitem[Fuj78]{Fujiki1978}
A.~Fujiki.
\newblock On automorphism groups of compact {K}ähler manifolds.
\newblock {\em Invent.. math.}, 44:225--258, 1978.

\bibitem[Gau]{dercalabi}
Paul Gauduchon.
\newblock {C}alabi's extremal {K}ähler metrics : An elementary introduction.
\newblock
  http://germanio.math.unifi.it/wp-content/uploads/2015/03/dercalabi.pdf.

\bibitem[Gut01]{gutierrez}
C.E. Gutierrez.
\newblock {\em The Monge-Amp{\`e}re Equation}.
\newblock Birkh{\"a}user, 2001.

\bibitem[Guz75]{Mi}
M.~Guzm{\'a}n.
\newblock {\em Differentiation of Integral in $\RR^n$}.
\newblock Lecture Notes in Math. Springer, 1975.

\bibitem[Ham88]{Hamilton}
R.~S. Hamilton.
\newblock The {R}icci flow on surfaces.
\newblock In {\em Mathematics and general relativity ({S}anta {C}ruz, {CA},
  1986)}, volume~71 of {\em Contemp. Math.}, pages 237--262. Amer. Math. Soc.,
  1988.

\bibitem[{Hua}17]{2017arXiv170507735H}
H.~{Huang}.
\newblock {{K}ahler-{R}icci flow on homogeneous toric bundles}.
\newblock {\em ArXiv e-prints}, 2017.

\bibitem[KMoOG84]{K1}
F.C. Kirwan, J.N. Mather, University of~Oxford, and P.~Griffiths.
\newblock {\em Cohomology of Quotients in Symplectic and Algebraic Geometry}.
\newblock Princeton Univ., 1984.

\bibitem[Kno91]{Knop91}
Friedrich Knop.
\newblock The {L}una-{V}ust theory of spherical embeddings.
\newblock Notes de conférence, 1991.

\bibitem[Kob12]{Kobayashi2}
S.~Kobayashi.
\newblock {\em Transformation Groups in Differential Geometry}.
\newblock Springer, 2012.

\bibitem[MM92]{marcus1992survey}
M.~Marcus and H.~Minc.
\newblock {\em A Survey of Matrix Theory and Matrix Inequalities}.
\newblock Dover Publications, 1992.

\bibitem[Pas06]{pasquierthese}
B~Pasquier.
\newblock {\em {Fano horospherical varieties}}.
\newblock PhD thesis, {Universit{\'e} Joseph-Fourier - Grenoble I}, 2006.

\bibitem[Pas09]{Pasquier2009}
B.~Pasquier.
\newblock On some smooth projective two-orbit varieties with {P}icard number 1.
\newblock {\em Math. Ann.}, 344:963--987, 2009.

\bibitem[PS10]{MR2658183}
F.~Podest\`a and A.~Spiro.
\newblock {K}ahler-{R}icci solitons on homogeneous toric bundles.
\newblock {\em J. reine angew. Math.}, 642:109--127, 2010.

\bibitem[Ser56]{AIF_1956__6__1_0}
J.~P. Serre.
\newblock G\'eom\'etrie alg\'ebrique et g\'eom\'etrie analytique.
\newblock {\em Ann. de l'Inst. Fourier}, 6:1--42, 1956.

\bibitem[Spr98]{Spr98}
T.A. Springer.
\newblock {\em Linear Algebraic Groups}.
\newblock Modern Birkh{\"a}user Classics. Birkh{\"a}user, 1998.

\bibitem[Sze11]{sze}
G.~Szekelyhidi.
\newblock Greatest lower bounds on the {R}icci curvature of {F}ano manifolds.
\newblock {\em Compositio Math.}, 147:319–331, 2011.

\bibitem[Sz{\'e}14]{sze2}
G.~Sz{\'e}kelyhidi.
\newblock {\em An Introduction to Extremal K{\"a}hler Metrics}.
\newblock Amer. Math. Soc., 2014.

\bibitem[Tim11]{Tim11}
D.A. Timashev.
\newblock {\em Homogeneous Spaces and Equivariant Embeddings}.
\newblock Springer, 2011.

\bibitem[TZ00]{TZ1}
Gang Tian and Xiaohua Zhu.
\newblock Uniqueness of {K}ähler-{R}icci solitons.
\newblock {\em Acta Math.}, 184:271--305, 2000.

\bibitem[TZ02]{TZ2}
G.~Tian and X.~Zhu.
\newblock A new holomorphic invariant and uniqueness of {K}ähler-{R}icci
  solitons.
\newblock {\em Comm. Math. Helv.}, 77:297--325, 2002.

\bibitem[WZ04]{WZ}
Xu-Jia Wang and Xiaohua Zhu.
\newblock {K}ähler–{R}icci solitons on toric manifolds with positive first
  {C}hern class.
\newblock {\em Adv. in Math.}, 188:87 -- 103, 2004.

\bibitem[Yau78]{Y}
Shing-Tung Yau.
\newblock On the {R}icci curvature of a compact {K}ähler manifold and the
  complex {M}onge-{A}mp\`ere equation, {I}.
\newblock {\em Comm. on Pure and Applied Math.}, 31:339--411, 1978.

\bibitem[Zhu12]{ZZZ}
Xiaohua Zhu.
\newblock {K}\"ahler-{R}icci flow on a toric manifold with positive first
  {C}hern class.
\newblock In {\em Differential geometry}, volume~22 of {\em Adv. Lect. Math.
  (ALM)}, pages 323--336. Int. Press, 2012.

\end{thebibliography}
\end{document}